\documentclass[letterpaper,12pt]{article}
\makeatletter
\usepackage[dvipsnames]{xcolor}
\usepackage{graphicx} 
\usepackage{enumitem}
\usepackage[all]{xy}
\usepackage{amsmath, amssymb, amsthm}
\usepackage{thmtools}
\usepackage[left=1in, right=1in, top=1in, bottom=1in]{geometry}

\author{Sara Stephens}
\usepackage{float}
\def\makelowercaseFrak#1{%
\expandafter\newcommand\csname mf#1\endcsname{\mathfrak{#1}}}
\count@=0
\loop
\advance\count@ 1
\edef\y{\@alph\count@} 
\expandafter\makelowercaseFrak\y
\ifnum\count@<26
\repeat

\usepackage{xfrac}
\usepackage[modulo]{lineno}
\usepackage[colorlinks=true,linkcolor=blue,citecolor=blue,urlcolor=blue,citebordercolor={0 0 1},urlbordercolor={0 0 1},linkbordercolor={0 0 1}]{hyperref} 

\usepackage[alphabetic]{amsrefs}
\usepackage{tikz-cd}
\usepackage{tikz}
\usetikzlibrary{arrows, arrows.meta, positioning}
\usetikzlibrary{cd}
\usepackage[nameinlink]{cleveref}

\title{Strictly Semistable Quasimaps on the Wall}
\date{June 2026}

\theoremstyle{plain}
\newtheorem{thm}{Theorem}[section]

\newtheorem{lem}[thm]{Lemma}

\newtheorem{prop}[thm]{Proposition}

\newtheorem{const}[thm]{Construction}

\theoremstyle{definition}
\newtheorem{rem}[thm]{Remark}
\newtheorem{defn}[thm]{Definition}

\newtheorem{notn}[thm]{Notation}
\newtheorem{ex}[thm]{Example}

\newtheorem*{thm*}{Theorem}
\newtheorem*{prop*}{Proposition}

\DeclareMathOperator{\Spec}{Spec}

\renewcommand{\ss}{\mathrm{ss}}

\newcommand{\bG}{\mathbb{G}}
\newcommand{\bZ}{\mathbb{Z}}
\newcommand{\bN}{\mathbb{N}}
\newcommand{\bQ}{\mathbb{Q}}
\newcommand{\bR}{\mathbb{R}}
\newcommand{\bC}{\mathbb{C}}
\newcommand{\bA}{\mathbb{A}}
\newcommand{\bP}{\mathbb{P}}
\newcommand{\cL}{\mathcal{L}}

\newcommand{\cT}{\mathcal{T}}
\newcommand{\cY}{\mathcal{Y}}
\newcommand{\cC}{\mathcal{C}}
\newcommand{\cO}{\mathcal{O}}
\newcommand{\fY}{\mathfrak{Y}}
\newcommand{\fX}{\mathfrak{X}}

\newcommand{\git}{\mathbin{
  \mathchoice{/\mkern-6mu/}
    {/\mkern-6mu/}
    {/\mkern-5mu/}
    {/\mkern-5mu/}}}

\DeclareMathOperator{\ev}{ev}

\DeclareMathOperator{\Perf}{Perf}

\DeclareMathOperator{\coker}{coker}

\DeclareMathOperator{\Pic}{Pic}

\DeclareMathOperator{\univ}{univ}

\DeclareMathOperator{\Map}{Map}

\DeclareMathOperator{\Aut}{Aut}

\DeclareMathOperator{\Spf}{Spf}

\DeclareMathOperator{\ch}{ch}
\DeclareMathOperator{\tot}{tot}

\DeclareMathOperator{\Proj}{Proj}

\DeclareMathOperator{\ADeg}{ADeg}
\DeclareMathOperator{\Filt}{Filt}
\DeclareMathOperator{\Bl}{Bl}
\DeclareMathOperator{\Qcoh}{QCoh}
\DeclareMathOperator{\wt}{wt}

\DeclareMathOperator{\QmapO}{\frak{QMap}}

\newcommand{\Qmap}{\QmapO^\epsilon_{g, n}(\bP^N, d)}
\newcommand{\QmapPre}{\QmapO_{g, n}^{\rm{pre}}(\bP^N, d)}
\newcommand{\Qmapplus}{\QmapO^{\epsilon_+}_{g, n}(\bP^N, d)}
\newcommand{\Qmapminus}{\QmapO^{\epsilon_-}_{g, n}(\bP^N, d)}
\newcommand{\bigQmap}{\QmapO^{\epsilon_0-\ss}_{g, n}(\bP^N, d)}

\DeclareMathOperator{\STr}{\overline{ST}_R}

\theoremstyle{plain}
\newtheorem*{theoremA}{Theorem A (= \Cref{P:alg-stack} + \Cref{T:finite-type} + \Cref{T:aff-diag} + \Cref{thm A})}
\newtheorem*{theoremB}{Theorem B (= \Cref{T:theta-stratifications} + \Cref{prop:ss=Q-minus} + \Cref{prop:ss=Q-plus})}

\begin{document}

\maketitle
\begin{abstract}

Moduli spaces of $\epsilon$-stable quasimaps exhibit a wall-and-chamber structure, interpolating between the moduli of stable quasimaps and Kontsevich’s moduli spaces of stable maps. In the case where the target is projective space, we develop an intrinsic framework for the wall-crossing phenomenon by constructing an algebraic stack where strictly semistable objects appear at a wall in the space of $\epsilon$-stability conditions. We show this algebraic stack admits a proper good moduli space and analyze the variation of $\Theta$-stratification on this stack. This yields a new approach to developing a K-theoretic wall-crossing formula for $\epsilon$-stable quasimap invariants via non-abelian  localization.
\end{abstract}
{\small\tableofcontents}
\newpage
\section{Introduction}

A classical problem in enumerative geometry asks for the number of rational curves contained in an algebraic variety that satisfy prescribed incidence conditions. In modern enumerative geometry, rigorous answers are provided by curve counting theories, which give a systematic framework that assigns virtual numbers to curves (of fixed genus and degree) inside a given smooth projective variety $X$. Prominent among these is Gromov--Witten theory, whose central object is the moduli space $\overline{\mathcal{M}}_{g,n}(X, d)$ of degree $d$ stable maps from an $n$-marked nodal curve of genus $g$ into $X$. This moduli space is a proper Deligne–Mumford stack with a virtual fundamental class, and the Gromov--Witten invariants are defined as integrals over it. The open substack $\mathcal{M}_{g,n}(X, d) \subset \overline{\mathcal{M}}_{g,n}(X, d)$ of maps with smooth domain curve is not compact; other natural compactifications exist, yielding different invariants closely related to Gromov--Witten invariants of \cite{KM}.

A family of such compactifications is provided by the theory of $\epsilon$-stable quasimaps, established by Ciocan-Fontanine, Kim, and Maulik in \cite{ciocanfontanine2011stablequasimapsgitquotients}, for a large class of GIT quotients of affine varieties. This theory unifies many earlier constructions \cites{kontsevich, MOP, Ciocan_Fontanine_2010, Toda_2011, mustata2007intermediate}. The moduli stacks $\frak{QMap}^\epsilon_{g, n}(X, d)$ are proper and separated Deligne--Mumford stacks of finite type, with stability depending on a positive rational parameter $\epsilon$.  As $\epsilon$ varies, these spaces $\frak{QMap}^\epsilon_{g, n}(X, d)$ interpolate between two classical moduli problems: stable maps and stable quasimaps. As $\epsilon \rightarrow \infty$, the theory recovers the usual Gromov–Witten theory of $X$, and the quasimaps coincide with the familiar stable maps, where the source curve has many rational tails and no basepoints. On the other hand, when $\epsilon \rightarrow 0$, we obtain stable quasimaps, coinciding with stable quotients defined by Marian, Oprea, and Pandharipande \cite{MOP}. These quasimaps have no rational tails and many basepoints. The simpler domain curves allow certain quasimap invariants to be explicitly computed, especially in genus 0. 

As $\epsilon$ varies from $0$ to $\infty$,  the space $\mathbb{Q}_{> 0} \cup \{\infty\}$ of stability parameters admits a wall-and-chamber structure. The theory changes only at discrete values of $\epsilon$, called walls. Assuming for simplicity that $(g, n)$ is not $ (0, 0)$, the walls occur at $\epsilon_0 = 1/m_0$ with $m_0 \in \bN$. For fixed numerical data $(g, n, d)$, there are a finite number of walls, and the moduli spaces are constant in each chamber $(\frac{1}{m_0+1}, \frac{1}{m_0}]$. As $\epsilon$ increases and we move through the intermediate moduli spaces, the domain curve is allowed to develop more rational components and the quasimap is closer to being a genuine map. 

Wall-crossing formulas express changes in the enumerative invariants that occur as $\epsilon$ varies between adjacent chambers. Developing such formulas has been a subject of much interest: a cohomological wall-crossing formula was originally conjectured by Ciocan-Fontanine and Kim \cite[Conjecture 1.1]{CKwall-crossings-and-mirror-symmetry}, and proven in increasing generality in \cites{CiocanFontanineKim2014, CKhigher-genus, clader-janda-ruan, CKwall-crossings-and-mirror-symmetry}, and full generality in \cite{Zhou:masterspace}. An analogous K-theoretic wall crossing formula appears in \cite{zhang2020ktheoreticquasimapwallcrossing} in full generality.

\paragraph{An intrinsic perspective on wall-crossing.} In this paper, we take a new approach to studying the wall-crossing behavior of $\epsilon$-stable quasimaps using algebraic stacks. We follow the Beyond GIT program \cite{halpernleistner2022structureinstabilitymodulitheory}, which presents an intrinsic approach to construct and study good moduli spaces and $\Theta$-stratifications. One application of this theory is a generalization of the classical theory of variation of GIT quotient \cite{VGIT}, by varying the $\Theta$-stratification of an algebraic stack $\frak{X}$. A complete discussion can be found in \cite[Section 5.6]{halpernleistner2022structureinstabilitymodulitheory}. We summarize the wall-crossing principle here:

Let $\frak{X}_1$ and $\frak{X}_2$ be algebraic stacks parameterizing objects of geometric interest, both admitting good moduli spaces $M_1$ and $M_2$ along with a birational morphism of algebraic spaces $M_1 \dashrightarrow M_2$. Ideally, there should be a third algebraic stack $\frak{X}$ that admits a good moduli space $M$, also parameterizing objects of geometric interest. The stack $\frak{X}$ should contain both $\frak{X}_1$ and $\frak{X}_2$ as open substacks, with the resulting maps $M_1 \rightarrow M \leftarrow M_2$ being projective and birational. Defining $\frak{X}$ requires some care, as $\frak{X}$ must necessarily have objects with positive-dimensional stabilizers. Constructing moduli spaces for true algebraic stacks is much more subtle than Deligne--Mumford stacks.

By varying the $\Theta$-stratification of $\frak{X}$, one can recover the open substacks $\frak{X}_i \subset \frak{X}$ for $i = 1, 2$ as the semistable locus of the stratification, thereby enabling a comparison of the geometries of $\frak{X}_1$ and $\frak{X}_2$.
\begin{center}
\begin{tikzcd}
                                  & \frak{X} \arrow[d] &                                 \\
\frak{X}_1 \arrow[ru, hook] \arrow[d]   & M                  & \frak{X}_2 \arrow[lu,hook'] \arrow[d] \\
M_1 \arrow[ru] \arrow[rr, dashed] &                    & M_2 \arrow[lu]                 
\end{tikzcd}
\end{center}

This variation of good moduli space picture has been used in \cites{bu2025intrinsicdonaldsonthomastheoryi, bu2025intrinsicdonaldsonthomastheoryii}, where the authors use this setup in order to develop a combinatorial framework for wall-crossing in Donaldson--Thomas theory. This geometric picture is absent from quasimap story -- there is a missing theory of semistable quasi-maps for $\epsilon_0 = 1/m_0$. The definition of $\epsilon$-stability in \cite{ciocanfontanine2011stablequasimapsgitquotients} is constructed in such a way that stability for quasimaps on the wall $\epsilon_0 = 1/m_0$ is the same as stability in the chamber $(\frac{1}{m_0+1}, \frac{1}{m_0}]$ just to the left of the wall. There is a missing notion of semistability for $\epsilon_0$-semistable quasimaps on the wall, which is needed to apply the above framework for wall-crossing in moduli theory.

\paragraph{Main results.} We apply the wall-crossing principle discussed above to study wall-crossing for $\epsilon$-stable quasimaps, in the case where the target is projective space. Let $\epsilon_0 = 1/m_0$ be a wall in the space of stability conditions, $\epsilon_-$ denote a rational number in the chamber to the left of $\epsilon_0$, and similarily let $\epsilon_+$  denote a rational number in the chamber to the right of $\epsilon_0$. We construct a stability condition termed $\epsilon_0$-semistability in \Cref{D:e0-stability}, which defines an algebraic stack $\bigQmap$.

\begin{theoremA} $\bigQmap$ is a finite-type algebraic stack with affine diagonal, containing $\Qmapplus$ and $\Qmapminus$ as open substacks. It admits a proper good moduli space.
\end{theoremA}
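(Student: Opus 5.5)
Theorem A is the conjunction of four later results, and the plan is to prove them in sequence: algebraicity, finite type, affine diagonal, and existence plus properness of a good moduli space. The definition of $\epsilon_0$-semistability (\Cref{D:e0-stability}) is taken to mean the following conditions on a prestable quasimap $(C,p_i,L,s)$ with $\omega_C(\sum p_i)\otimes L^{\epsilon_0}$ nef:
\begin{itemize}[label={}]
\item \emph{(i)} every basepoint has length $\le m_0$;
\item \emph{(ii)} rational tails have degree $\ge m_0$;
\item \emph{(iii)} strictly semistable objects are allowed: rational tails of degree exactly $m_0$ carrying a single basepoint of length $m_0$, and also basepoints of length exactly $m_0$.
\end{itemize}

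\textbf{Algebraicity and openness.} Start from the stack $\QmapPre$ of prestable quasimaps. It is algebraic and locally of finite type, being an open substack of the relative moduli of sections $\mathrm{Sec}(\bigoplus L^{\oplus N+1})$ over $\mathfrak{Pic}$ of the universal prestable curve. One then checks that each $\epsilon_0$-semistability condition is open. Basepoint length is upper semicontinuous, and degree on rational tails and nefness are constructible and stable under generization. This shows $\bigQmap$ is an open substack of $\QmapPre$, hence algebraic. The inclusions of $\Qmapplus$ and $\Qmapminus$ are then immediate, since $\epsilon_\pm$-stable objects satisfy the $\epsilon_0$ inequalities and stability is open.

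\textbf{Finite type and affine diagonal.} For finite type, bound the number of components using $2g-2+n+\epsilon_0 d>0$ together with the ampleness or degree constraints. This places the curves in a finite-type substack of $\mathfrak{M}_{g,n'}$. Degree $d$ line bundles with sections on such curves then form a finite-type family. For the affine diagonal, note that automorphisms of $(C,L,s)$ form a closed subgroup of $\Aut(C)\ltimes\Aut(L)$. The key claim is that stabilizers are affine. The unstable-in-the-DM-sense parts are exactly the semistable rational tails and components with $\bG_m$-symmetry scaling the basepoint, so the positive-dimensional stabilizers are extensions of finite groups by tori, which are linear algebraic. Affineness of the diagonal itself, rather than just of the stabilizers, I would deduce by writing the Isom scheme as a closed subscheme of $\mathrm{Isom}$ of the curves (affine over the quasi-compact part of $\mathfrak{M}_{g,n}$ after using the polarization $\omega_C(\sum p_i)\otimes L^{\epsilon_0}$ plus an auxiliary twist) times $\mathrm{Isom}(L,L')$, which is a $\bG_m$-torsor.

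\textbf{Good moduli space.} Apply the existence criterion of Alper--Halpern-Leistner--Heinloth. This requires three inputs: $\Theta$-reductivity, $S$-completeness, and unpunctured inertia, the last of which holds in characteristic $0$ given affine stabilizers. Properness then follows from the existence part of the valuative criterion. Each of these is a filling problem over $\Theta_R$, $\overline{ST}_R$, or a DVR. I would solve them via semistable reduction for curves, elementary modifications, and contracting or blowing up rational tails. These are the same techniques as in CFKM's properness proof, now allowing the boundary case of degree $m_0$ tails and length $m_0$ basepoints. Concretely, given a family over the punctured base, take the $\epsilon_-$-stable limit, which exists by properness of $\Qmapminus$. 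The $S$-completeness argument then glues two limits along a family over $\overline{ST}_R$ in which length-$m_0$ basepoints are exchanged with degree-$m_0$ rational tails, via the blow-up of $C_R$ at the basepoint in the special fiber. This is the standard $\bG_m$-equivariant degeneration to a semistable tail.

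\textbf{Main obstacle.} I expect $S$-completeness and $\Theta$-reductivity to be the hard step. They require carefully extending quasimaps over the codimension-two point of $\overline{ST}_R$, where one must control the normalization of the total space and show the extended section has only allowed basepoints. I would first try to reduce to the local model near one basepoint or tail, where the geometry is a weighted $\bG_m$-action on $\bA^2$.
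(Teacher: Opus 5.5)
Your overall architecture is the paper's: an open substack of $\QmapPre$, finite type, affine diagonal via polarized curves, and then the Alper--Halpern-Leistner--Heinloth criterion ($\Theta$-reductivity, $S$-completeness, existence part of the valuative criterion), with fillings built from the $\epsilon_-$-model by blowing up. However, a few points need repair.

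\textbf{The definition.} Your reformulation of \Cref{D:e0-stability} is not the paper's. The paper requires $\omega_C(\sum x_i)\otimes L^{\epsilon_0+\delta}$ to be \emph{ample} for $0<\delta\ll 1$; you require only that $\omega_C(\sum x_i)\otimes L^{\epsilon_0}$ be nef. On a rational component with two special points, $\omega_C(\sum x_i)$ has degree $0$. Nefness therefore imposes nothing there, whereas the $\delta$-perturbed ampleness forces $\deg L>0$ on such components. Under your conditions (i)--(ii), arbitrarily long chains of degree-$0$ rational bridges are allowed. So the number of components is in general unbounded and your finite-type bound fails. Your Isom argument for the diagonal also loses its polarization. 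With the correct condition both steps go through. Your boundedness argument (boundedly many components, finitely many multidegrees, an abelian cone of sections over a finite-type Picard stack) is then a fine alternative to the paper's surjection from an $\epsilon_-$-stable stack with $k$ extra markings.

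\textbf{The filling step.} The step you defer as the main obstacle is where the content lies (\Cref{T:S-complete}, \Cref{T:Theta-red}, \Cref{T:GMS-proper}). Taken literally, your recipe fails. Blowing up $C_R$ at the basepoint of the special fibre does not in general give an $\epsilon_0$-semistable limit. For example, take $m_0=2$ and local sections $(x^2+\pi,\,x^2)$ and blow up $(x,\pi)$. The sections $\varphi_i f^{-2}$ of $\cO(-2E)$ acquire a pole ($v^2+\pi^{-1}$ in the chart $x=\pi v$). Twisting only by $\cO(-E)$ instead produces a tail of degree $1<m_0$. What is needed is a ramified base change and a weighted centre, as in the paper:
\begin{enumerate}
\item The curve of a generically $\epsilon_-$-stable family is a blow-up of its $\epsilon_-$-model (\Cref{lem:filling-is-a-blowup}).
\item Pushout lemmas reduce the problem to formal neighbourhoods of the contracted points.
\item There one takes $\mathrm{Bl}_{(x,s^at^b)}(\bA^1_x\times\overline{ST}_R)$, resp.\ the centre $(x,\pi^at^b)$ for $\Theta_R$. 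The rational exponents $a,b\geq 0$ are constrained by the valuation invariant $\lambda_{\mathcal L}$. This ensures the sections of $\cO(-m_0E)$ have no poles, the two prescribed special fibres are recovered, and a spinning tail appears over the origin.
\end{enumerate}

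Moreover, ``take the $\epsilon_-$-stable limit'' discards any spinning tail already present in the generic fibre. On its own it therefore cannot handle generically strictly semistable families over $\overline{ST}_R\setminus 0$ or $\Theta_R\setminus 0$, nor strictly semistable $K$-points in the valuative criterion. The paper splits the curve at nodes and treats an isolated degree-$m_0$ tail carrying its length-$m_0$ basepoint separately, via $\mathcal M_{0,2}\cong B\bG_m$ (\Cref{P:degenerate-case}, \Cref{L:single-tail-proper}).

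Finally, unpunctured inertia does not follow from affine stabilizers in characteristic $0$; it is a consequence of $S$-completeness. The criterion you are invoking (Theorem A of Alper--Halpern-Leistner--Heinloth) asks only for $\Theta$-reductivity and $S$-completeness, so drop that third input.
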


The stacks $\bigQmap$, $\Qmapplus$ and $\Qmapminus$ play the roles of $\frak{X}$, $\frak{X}_1$ and $\frak{X}_2$ above. We use this setup to study wall-crossing.

\begin{theoremB} There exists a pair of numerical invariants $\nu_1, \nu_2$ arising from explicit cohomology classes $\ell_1, \ell_2 \in H^2(\bigQmap, \bQ)$ and a norm $b \in H^4(\bigQmap, \bQ)$ 
which define \\$\Theta$-stratifications of $\bigQmap$. The semistable loci of these stratifications recover the moduli stacks on either side of the wall:
$$\bigQmap^{\ell_1-\ss} = \Qmapminus, \hspace{0.7cm} \bigQmap^{\ell_2-\ss} = \Qmapplus,$$
where $\epsilon_- < \epsilon_0 < \epsilon_+$ are stability parameters in adjacent chambers.
\end{theoremB}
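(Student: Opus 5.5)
To prove Theorem B, I would follow the recipe of \cite[Section 5.6]{halpernleistner2022structureinstabilitymodulitheory}. The plan is to define line bundles, compute their weights on $\Theta$-families, verify the stratification criteria, and then identify the semistable loci.

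\textbf{Defining $\ell_1,\ell_2$ and the norm.} Let $\pi:\cC\to\bigQmap$ be the universal curve, $\omega_\pi$ its relative dualizing sheaf, $\cL$ the universal line bundle and $\Sigma_i$ the marked sections. I would take
\[
\ell_{\pm} \;=\; c_1\bigl(\det R\pi_*(\ldots)\bigr),
\]
a combination of Deligne-pairing classes $\langle \omega_\pi(\Sigma),\cL\rangle$, $\langle\cL,\cL\rangle$ and $\langle\omega_\pi(\Sigma),\omega_\pi(\Sigma)\rangle$. The coefficients would be chosen so that the weight against a $\Theta$-family is a linear function of the stability parameter $\epsilon$, namely $\ell_\epsilon = \ell_{\epsilon_0} + (\epsilon-\epsilon_0)\,\ell'$. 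Then $\ell_1 = \ell_{\epsilon_-}$ and $\ell_2 = \ell_{\epsilon_+}$. For the norm $b\in H^4$ I would use a positive-definite class, for instance $\langle\cL,\cL\rangle$ plus the $\kappa$-type class $\pi_*(c_1(\omega)^2)$, which is positive on the relevant one-parameter degenerations.

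\textbf{Computing weights on $\Theta$-families.} A map $f:\Theta\to\bigQmap$ is a $\bG_m$-equivariant degeneration. The $\bG_m$-fixed objects are quasimaps glued from a rational tail carrying a length-$m_0$ basepoint on one side and a degree-$m_0$ rational tail on the other. I would classify these fixed points, which appear as strictly semistable at $\epsilon_0$ by \Cref{D:e0-stability}, by the $\bG_m$-weights on the rational tails. I would then compute $\wt(\ell_\epsilon)$ via the Riemann--Roch/Deligne pairing weight formula. The expected outcome is that $\wt(\ell_{\epsilon_0})=0$ exactly on these fixed points, while $\wt(\ell')$ has sign determined by whether the filtration contracts a tail into a basepoint or sprouts one.

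\textbf{Existence of the stratifications.} Existence follows by verifying the HL criteria: the stack must be finite type with affine diagonal (Theorem A), $\Theta$-reductive and $S$-complete on the relevant parts, and must satisfy the HN boundedness condition. Boundedness is automatic from finite type, and the monotonicity conditions (strict $\Theta$-monotonicity of $\nu=\wt(\ell)/\sqrt{b}$) can be reduced to the existence of the good moduli space, again by Theorem A. Then \cite{halpernleistner2022structureinstabilitymodulitheory} gives $\Theta$-stratifications for $\nu_1,\nu_2$.

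\textbf{Identifying the semistable loci (main obstacle).} An $\epsilon_0$-semistable quasimap is $\ell_1$-semistable iff no destabilizing filtration has $\wt(\ell_1)>0$. Using the weight computation, the only candidate destabilizing filtrations are those degenerating to the strictly semistable fixed points. Among these, the ones with positive $\ell_1$-weight are exactly those that witness a degree-$m_0$ rational tail. These exist precisely when the quasimap fails $\epsilon_-$-stability, giving $\bigQmap^{\ell_1\text{-}\ss}=\Qmapminus$. Symmetrically, positive $\ell_2$-weight witnesses a length-$m_0$ basepoint, giving $\Qmapplus$. The hard part is showing that every unstable point admits a filtration of this special form, i.e.\ classifying all $\Theta$-filtrations of a given quasimap. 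For this I would use the description of $\Theta$-maps as $\bG_m$-equivariant families over $\bA^1$ together with the good moduli space to reduce to closed points.
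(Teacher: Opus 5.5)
Your overall architecture is the paper's. You define determinant-type classes, compute their weights on graded points, divide by a norm from $H^4$, invoke the existence theorem for stacks that are $\Theta$-reductive and $S$-complete, and then identify the semistable loci. There are, however, two genuine gaps.

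\textbf{The norm is in the wrong degree.} A Deligne pairing is a line bundle on the base, so $\langle\cL,\cL\rangle$ and $\pi_*(c_1(\omega_\pi)^2)=\kappa_1$ are classes in $H^2(\bigQmap,\bQ)$, not $H^4$. An $H^2$ class restricts to a graded point as a \emph{linear} form in the spin rates. Localizing on a spinning tail of rate $r$ (weight $0$ at the node, $\cL$ of degree $m_0$, $\omega_\pi$ of degree $-1$) gives weights $\pm m_0^2 r$ and $\pm r$.

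So your $f^*b$ changes sign between positively and negatively spinning filtrations. These are precisely the two kinds of filtration that separate $\Qmapminus$ from $\Qmapplus$. Hence $\sqrt{f^*b}$ is undefined on one of them, and the positive-definiteness needed for the existence theorem fails.

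You need an honest degree-$4$ class that is positive definite on every graded point, including those with $s$ independent spinning tails and torus $\bG_m^s$. The paper uses $2\ch_2(R\pi_*(\cL\otimes\omega_\pi^{m_0-1}))$, which restricts to $\sum_i r_i^2$. Squaring your $\ell'$ would not suffice: it restricts to a multiple of $(\sum_i r_i)^2$, which is degenerate once $s\ge 2$.

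\textbf{The numerator is fine but not made explicit.} Your $\epsilon$-linear numerator is sound in principle and is really the paper's construction in other clothes. The key fact is that $\omega_\pi(\Sigma)\otimes\cL^{\epsilon_0}$ has degree $0$ and trivial linearization on every spinning tail. So, for example, $\ell_\epsilon=\langle\omega_\pi(\Sigma)\otimes\cL^{\epsilon},\cL\rangle$ has $\wt(\ell_{\epsilon_0})=0$ on all graded points. Then $\ell_{\epsilon_-}$ and $\ell_{\epsilon_+}$ have weights that are opposite-sign multiples of $\sum_i r_i$. This matches the paper's $\nu_2=-\nu_1$, with $\mathcal{M}_{a,b}$ in the role of your $\ell'$. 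You should write such a class down rather than leave the coefficients to be chosen.

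\textbf{The semistable loci are missing their key idea.} Knowing that filtrations stay in one fibre of the good moduli space map only tells you which graded points can occur as central fibres. It does not tell you whether $\lambda$ or $\lambda^{-1}$ can occur for a given $f(1)$, and that sign is the whole content. Indeed, the closed spinning-tail points are unstable for both $\nu_1$ and $\nu_2$, so reducing to closed points decides nothing.

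The paper supplies the missing argument as follows.
\begin{enumerate}
\item Composing a filtration with $\tilde{c}_{\bP^N}$ lands in the separated DM stack $\Qmapminus$, where filtrations are trivial. So every filtration lives in one fibre of the contraction morphism.
\item Within such a fibre, a spinning tail in the central fibre has one of two origins. It may sprout at a length-$m_0$ basepoint of $f(1)$; the deformation to the normal cone then forces $r_i>0$.
\item Otherwise it comes from an existing degree-$m_0$ tail of $f(1)$ whose basepoints are pushed to the far end, since they cannot reach the node; this forces $r_i<0$.
\item Hence $\epsilon_-$-stable points admit only filtrations with all $r_i>0$, and $\epsilon_+$-stable points only filtrations with all $r_i<0$.
\end{enumerate}
Combined with the explicit destabilizing filtrations for the remaining points, this sign dichotomy is what gives $\bigQmap^{\ell_1-\ss}=\Qmapminus$ and $\bigQmap^{\ell_2-\ss}=\Qmapplus$.
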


\paragraph{Plan of the paper.} In \Cref{S:preliminaries}, we review the construction of $\epsilon$-stable quasimaps. In \Cref{S:moduli-stack}, we define $\bigQmap$ and discuss it's properties. The effect of crossing the wall $\epsilon_0$ from the right to the left is to replace degree $m_0$ rational tails by length $m_0$ base points, so $\bigQmap$ includes quasimaps with both degree $m_0$ rational tails and length $m_0$ base points. In particular, the moduli stack contains strictly semistable objects with $(\bC^*)^s$ automorphism groups, namely quasimaps with $s$ degree $m_0$ rational tails that each contain a length $m_0$ basepoint.


The proof of Theorem A uses necessary and sufficient conditions for an algebraic stack of finite type with affine diagonal to admit a good moduli space, developed in \cite{Alper_2023_Existence}. These are codimension-two filling conditions, termed $\Theta$-reductivity and $S$-completeness. We analyze these conditions in \Cref{S:GMS}, which is the main technical part of the paper, and additionally establish the existence part of the valuative criterion for properness.

As an application of studying $\Theta$-reductivity and $S$-completeness, we are able to connect our construction of $\epsilon_0$-semistable quasimap fillings to the affine degeneration space, introduced by Halpern-Leistner in \cite[Section 4]{ICM2026}. This is discussed in \Cref{S:ADeg}.

Variation of $\Theta$-stratification on $\bigQmap$ is discussed in \Cref{S:theta-stratifications}. We introduce a family of natural line bundles on $\bigQmap$ to obtain two different numerical invariants providing a numerical measure of instability of points of $\bigQmap$. An analysis of semistability is carried out, leading to Theorem B.

\paragraph{Future directions.}

\begin{enumerate}[leftmargin=*]
    \item \textbf{An explicit wall crossing formula.} The most prominent application of the present work is a wall-crossing formula relating $\epsilon$-stable quasimap invariants from different stability chambers. We have exactly the moduli-theoretic setup needed to apply the wall-crossing formula for algebraic stacks admitting a proper good moduli space as presented in \cite[Section 4.1]{halpernleistner2025categoricalperspectivenonabelianlocalization} using the tools of  non-abelian localization. Applying the formula \cite[Equation 18]{halpernleistner2025categoricalperspectivenonabelianlocalization} to the two $\Theta$-stratifications of $\bigQmap$ of Theorem B gives for any $F \in \Perf(\bigQmap)$
    \begin{align*}
        \chi(&\Qmapminus, F) - \chi(\Qmapplus, F) = \label{eq:wall-crossing}\\
        &\sum_{\beta} \chi\left(\mathcal{Z}_{\beta}^{\ell_2}, \frac{\tot^*_\beta(F)}{e(N_{\mathcal{Z}_{\beta}^{\ell_2}}\bigQmap)}\right) - \sum_{\alpha} \chi\left(\mathcal{Z}_{\alpha}^{\ell_1}, \frac{\tot^*_\alpha(F)}{e(N_{\mathcal{Z}_{\alpha}^{\ell_1}}\bigQmap)}\right).
    \end{align*}
    Here $\tot_\alpha : \mathcal{Z}_{\alpha}^{\ell_1} \xrightarrow{} \bigQmap$ and $\tot_\beta : \mathcal{Z}_{\beta}^{\ell_2} \xrightarrow{} \bigQmap$ are the centres of the unstable strata for $\ell_1$ and $\ell_2$, respectively. A more explicit description of this formula in terms of $\epsilon$-stable quasimap invariants will appear in forthcoming work.
    \item \textbf{More general targets $X/G$, including orbifold targets.} The theory of $\epsilon$-stable quasimaps in \cite{CiocanFontanineKim2014} is developed for all GIT quotients $X/G$, where $X$ is an affine variety with at worst lci singularities and $G$ is a reductive group. We expect our results to extend to these general targets. We are currently working to generalize our approach to all such $X/G$, and to extend the wall crossing formula above. We also expect our results to hold for the theory of $\epsilon$-stable quasimaps to orbifold targets, developed in \cite{cheong2015orbifoldquasimaptheory}.
\end{enumerate}

\paragraph{Related work.} The most notable comparison to our work is that of Zhang and Zhou \cite{zhang2020ktheoreticquasimapwallcrossing}, which builds on earlier work of Zhou \cite{Zhou:masterspace}. They prove a K-theoretic wall-crossing formula relating $\epsilon$-stable quasimap invariants across different stability chambers for all targets and all genera, including the orbifold case. Their proof uses virtual localization on a larger moduli space, called a “master space”. The fixed-point loci of the master space are in one-to-one correspondence with the terms in the wall-crossing formula. 

The master space constructed in \cite{Zhou:masterspace} plays a similar role to $\bigQmap$, but it is built very differently. 
It is a $\bP^1$-bundle over the stack of ``entangled" tails, where the entanglement comes from a sequence of blowups on the moduli stack of weighted curves. This results in the master space being a proper Deligne--Mumford stack with a global $\bC^*$-action, as opposed to our algebraic stack $\bigQmap$ which allows objects with higher rank torus automorphism groups. Our construction is more directly motivated by the inherent geometry of the wall crossing, and avoids the auxiliary construction of a master space.

\paragraph{Acknowledgements.} I would like to thank my advisor, Daniel Halpern-Leistner, for his guidance and helpful discussions during the preparation of this paper. I am also grateful to Andres Fernandez Herrero, Giovanni Inchiostro and Rachel Webb for many useful conversations. Lastly, I'd like to thank Alekos Robotis for his helpful feedback on drafts of this paper. The author is partially supported by a Natural Sciences and Engineering Research Council of Canada (NSERC) PGS-D scholarship.

\paragraph{Conventions and Notation.} We work over $\bC$ throughout the paper. All schemes are locally of finite type. We sometimes write DM in place of Deligne--Mumford (e.g. DM stack). We will always use $R$ to denote a discrete valuation ring with field of fractions $K$ and uniformizer $\pi$.

\section{Preliminaries on $\mathbf{\epsilon}$-Stable Quasimaps}\label{S:preliminaries}

We recall the notion of a quasimap to $\bP^N$, given in \cite[Definition 3.1.1]{Ciocan_Fontanine_2010}. Let $\bC^*$ act on $\bC^{N+1}$ by $\lambda \cdot (y_0, \ldots, y_N) = (\lambda y_0, \ldots, \lambda y_N)$. Taking $\theta:\bC^*\xrightarrow{} \bC^*$ to be the identity character, the semistable locus is $(\bC^{N+1})^\ss = \bC^{N+1}\backslash \{0\}$ and the GIT quotient $\bC^{N+1} \git_\theta \bC^*$ is $ \bP^N$.

\begin{defn}\label{D:prestableQmap}
A $n$-pointed \textbf{prestable quasimap} to $\bP^N$ of genus $g$ and degree $d$ consists of data $(C, x_1, . . . , x_n, L, \{s_i\}_{i = 0}^N)$ where

\begin{itemize}
    \item $C$ is a connected nodal curve of genus $g$ with marked points $x_1, . . . , x_n$
    \item $L$ is a degree $d$ line bundle over $C$ with $s_0, \ldots s_N \in \Gamma(C, L)$
    \item the set of zeroes of $(s_0, \ldots s_N)$ is finite and disjoint from the nodes and the markings on $C$.
\end{itemize}
\end{defn}

\noindent This definition extends to families as follows.

\begin{defn}
A $n$-pointed \textbf{family of prestable quasimaps} to $\bP^N$ of genus $g$ and degree $d$ over scheme $S$ is given by $(\pi:\cC_S \rightarrow S, \{x_i: S \xrightarrow{} \cC_S\}_{i = 1}^n,\cL_S, \{s_j\}_{i = 0}^N)$ where

\begin{itemize}
    \item $\pi:\cC_S \rightarrow S$ is a flat family of connected nodal curves over $S$
    \item $x_1, \ldots x_n$ are disjoint sections of $\pi$
    \item $\cL_S$ is a line bundle over $\cC_S$ of degree $d$  along fibres of $\cC_S/S$, with each $s_j \in \Gamma(\cC_S, \cL_s)$
\end{itemize}    

such that restriction to each geometric fiber of $\pi$ is a prestable quasimap of genus $g$ and degree $d$ as in \Cref{D:prestableQmap}.
\end{defn}

We denote the stack of all $n$-pointed prestable quasimaps of degree $d$ and genus $g$ to $\bP^N$ by $\QmapPre$. We also sometimes use $\mu$ denote the map to $\bP^N$ given by the sections $ \{s_j\}_{i = 0}^N$.

\begin{defn} \label{D:e-stability} \cite[Definition 7.1.3]{ciocanfontanine2011stablequasimapsgitquotients}
    Let $\epsilon$ be a positive rational number. The prestable quasimap $(C, x_1, . . . , x_n, L, \{s_i\}_{i = 0}^N)$ to $[\bC^{N+1}/\bC^*]$ is \textbf{$\mathbf{\epsilon}$-stable} if

    \begin{enumerate}
        \item $\omega_C(x_1 + \ldots + x_n) \otimes L^\epsilon$ is ample, and
        \item For every point $x \in C$ we have $\ell(x) \leq 1/\epsilon$, where $\ell(x)$ is the \textit{length} of the point $x$, given by
        $$\ell(x) := \mathrm{length}_x(\coker(\cO_C^{\oplus N+1} \xrightarrow{} L)).$$
    \end{enumerate}
\end{defn}

A family of prestable quasimaps is $\epsilon$-stable if every geometric fiber is $\epsilon$-stable. We denote the stack of $n$-pointed $\epsilon$-stable quasimaps of genus $g$ and degree $d$ to $\bP^N$ by $\Qmap$.

\begin{thm}\label{T:e-stableDM}\cite[Theorem 7.1.6]{ciocanfontanine2011stablequasimapsgitquotients}
$\Qmap$ is a proper
separated Deligne--Mumford stack of finite type. 
\end{thm}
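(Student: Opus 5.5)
The plan is to prove \Cref{T:e-stableDM} in three stages: first that $\Qmap$ is an algebraic stack of finite type, then that it is Deligne--Mumford, and finally that it is separated and proper via the valuative criteria. I would begin by realizing $\QmapPre$ as an algebraic stack locally of finite type. It sits over the stack $\mathfrak{Pic}$ of line bundles on the universal family of $\mathfrak{M}_{g,n}$, the stack of prestable $n$-pointed curves. The data $\{s_j\}$ is a point of the abelian cone $\Spec \operatorname{Sym} R^1\pi_*(\cL^\vee\otimes\omega)^{\oplus N+1}$. Representability of this cone over $\mathfrak{Pic}$, together with the open conditions on the base locus (finite, and avoiding nodes and markings), gives an algebraic stack.

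Next I would check that $\epsilon$-stability is open. Ampleness of $\omega_C(\sum x_i)\otimes L^\epsilon$ is open by the usual argument for ampleness in proper flat families. The bound $\ell(x)\le 1/\epsilon$ is open by upper semicontinuity of the length of the cokernel of $\cO^{N+1}\to\cL$, which is a sheaf finite over the base.

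For boundedness, ampleness forces every rational component with at most two special points to carry degree at least one. Combined with the fixed total degree $d$, this bounds the number of components. The curves therefore lie in a finite-type substack of $\mathfrak{M}_{g,n}$, and the line bundles have bounded multidegree, so the stack is of finite type.

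To show the stack is Deligne--Mumford, I would prove that automorphism groups are finite and reduced. Infinitesimal automorphisms are vector fields on $C$ vanishing at special points and preserving $(L,s)$. On a component where $L$ has positive degree, $s\neq0$, so the scaling of $L$ is rigidified. Ampleness then rules out unstable components with trivial $L$ and few special points, so no continuous automorphisms survive. Finite type follows.

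The main work is the valuative criterion over a DVR $R$. For existence, I would start from an $\epsilon$-stable family over $K$. After a finite base change, I would take a stable map limit of the associated morphism away from base points, or alternatively use semistable reduction of the marked curve with the base points added as extra markings. This produces some prestable limit. I would then modify the central fibre in two ways. Rational tails of degree $\le 1/\epsilon$ are contracted, and the line bundle $\cL$ is twisted by the corresponding components, so that their degree becomes a base point of length equal to the tail degree, which is at most $1/\epsilon$. Conversely, base points of length $>1/\epsilon$ are resolved by blowing up the total space and bubbling off rational tails. Uniqueness, which gives separatedness, amounts to comparing two such limits on a common resolution of the two families over $R$. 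The sections agree generically, so they differ by a twist by vertical divisors. The stability conditions, ampleness on one side and the length bound on the other, force the twist to be trivial and the contractions to coincide.

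I expect this uniqueness and modification step to be the main obstacle. The difficulty is precisely that the threshold $\ell=1/\epsilon$ versus tail degree $1/\epsilon$ is decided asymmetrically by the two stability conditions. At $\epsilon=1/m_0$ the length bound is non-strict while ampleness requires tail degree strictly greater than $m_0$, and this asymmetry is what picks out a unique limit.
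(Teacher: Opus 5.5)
The paper gives no proof of this theorem; it cites Ciocan-Fontanine--Kim--Maulik. Your outline follows their standard route: algebraicity via the abelian cone over the Picard stack, openness of the two conditions, boundedness, and the valuative criterion via a stable-map limit followed by contracting low-degree tails. There is, however, a genuine gap in your Deligne--Mumford step. It never uses condition (2) of \Cref{D:e-stability}, and without that condition the conclusion is false.

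Rigidifying the scaling of $L$ only kills automorphisms that act trivially on $C$. The dangerous automorphisms move a rational component with at most two special points, and positive degree of $L$ does not obstruct them. Take a rational tail $T\cong\bP^1$ meeting the rest of $C$ at $[1:0]$, with $L|_T\cong\cO(e)$ and $s_j|_T=c_jz^e$ in coordinates $[z:w]$. Let $\lambda\cdot[z:w]=[\lambda z:w]$ act on $T$, extend it by the identity on the rest of $C$, and use the linearization of $\cO(e)$ that is trivial on the fibre over the node. This action fixes every $s_j$. So this quasimap has a $\bC^*$ of automorphisms, although $L$ has positive degree on every component, no section vanishes identically, the base point avoids the node, and $\omega_C(\sum x_i)\otimes L^\epsilon$ is positive on $T$ once $e>1/\epsilon$. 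These are exactly the spinning tails of this paper. Applied verbatim, your reasoning would prove that $\bigQmap$ is Deligne--Mumford, which it is not.

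What is missing is the following argument. Suppose a nonzero vector field $v$ on a component $C'$, vanishing at its special points, lifts to $L$ so as to kill all $s_j$.
\begin{itemize}
\item Then $v(s_j/s_k)=0$, so the $s_j|_{C'}$ are constant multiples of a single section $\sigma$, and $\sigma$ can vanish only at zeros of $v$.
\item Base points avoid nodes and markings, and ampleness gives $\deg L|_{C'}>0$ on any component carrying such a $v$. This excludes rational bridges, an unmarked smooth genus-one curve, and the translation field on a rational tail.
\item The only survivor is a rational tail whose whole base locus is one point of length $\deg L|_{C'}$. Ampleness forces $\deg L|_{C'}>1/\epsilon$, while (2) forces $\ell\le 1/\epsilon$, a contradiction.
\end{itemize}
Together with $s\neq 0$ ruling out pure rescalings of $L$, this gives no infinitesimal automorphisms, hence finite reduced stabilizers in characteristic $0$.

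The same trade-off is what your separatedness step needs; as written, that step only asserts the conclusion. On a common resolution, consider a configuration contracted to a point $p$ in one model but surviving in the other. In the surviving model it is a tree of rational curves of positive total degree. You must check, via the vertical twist, that $p$ is then a base point of that length in the first model. This is impossible at a node or marking. Otherwise the tree contains a tail of degree $>1/\epsilon$, contradicting (2).
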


\begin{notn}
    We always consider $(g, n) \notin \{(0, 0), (0,1)\}$. Fix $\epsilon_0 = 1/m_0$ as a wall in the space of stability parameters, with $m_0 \in \bN$. Let $\epsilon_-$ denote a rational number in the chamber $(\frac{1}{m_0+1}, \frac{1}{m_0}]$ to the left of the $\epsilon_0$ wall. Similarly, we take $\epsilon_+ \in (\frac{1}{m_0}, \frac{1}{m_0-1}]$.
\end{notn}

To describe how the geometry of the source curve changes when crossing the wall $\epsilon_0$, we will use the following terminology for rational components.

\begin{notn}
    We say that a component of a curve is a \textbf{rational tail} if it is isomorphic to $\bP^1$ and intersects the rest of the curve at exactly one node and has no marked points. A \textbf{rational bridge} is a component isomorphic to $\bP^1$ that either intersects the rest of the curve in exactly two nodes with no marked points, or intersects the rest of the curve at only one node and has exactly one marked point.
\end{notn}

Assuming that $(g, n) \notin \{(0, 0), (0,1)\}$, the moduli spaces are non-empty and stay constant in each chamber $(\frac{1}{m_0+1}, \frac{1}{m_0}]$ for fixed numerical data $(g, n, d)$. For $\epsilon \in (1, \infty)$, the stability condition forces the sections $\{s_i\}_{i=0}^N$ to have no common zeroes, thus defining a genuine map to $\bP^N$ and recovering the notion of a Kontsevich stable map. On the other hand, when $\epsilon \le 1/d$, we have stable $0+$ quasimaps with no rational tails and lots of basepoints. Analyzing the stability condition of \Cref{D:e-stability} on either side of an $\epsilon_0 = 1/m_0$ wall gives

\begin{center}
\begin{minipage}[t]{0.4\textwidth}
\textbf{$\boldsymbol{\epsilon_-}$-Stable}
\begin{itemize}[leftmargin=*]
    \item $\deg(L|_{\text{rational tails}}) \ge m_0+1$
    \item $\deg(L|_{\text{rational bridges}}) > 0$
    \item $\ell(x) \le m_0$ for every $x \in C$
\end{itemize}
\end{minipage}
\hspace{0.5cm} 
\begin{minipage}[t]{0.5\textwidth}
\textbf{$\boldsymbol{\epsilon_+}$-Stable}
\begin{itemize}[leftmargin=*]
    \item $\deg(L|_{\text{rational tails}}) \ge m_0$
    \item $\deg(L|_{\text{rational bridges}}) > 0$
    \item $\ell(x) \le m_0-1$ for every $x \in C$
\end{itemize}
\end{minipage}
\end{center}
\hspace{1cm}

Specifically, as we cross a $1/m_0$ wall from the $\epsilon_-$ chamber to the $\epsilon_+$ chamber, basepoints of length $m_0$ fail to be stable, and are replaced by degree $m_0$ rational tails.

\begin{prop}
    As an $\epsilon_0 = 1/m_0$ wall is crossed from the $\epsilon_-$ chamber to the $\epsilon_+$ chamber, the source curve can only develop a single rational tail at a basepoint of length $m_0$, and not a more complicated rational tree.
\end{prop}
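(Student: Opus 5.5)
The plan is to make precise what ``develop'' means, via the stable reduction picture for a one-parameter family, and then bound the degree available at a single basepoint. Take an $\epsilon_-$-stable quasimap $(C, L, \{s_i\})$ that is not $\epsilon_+$-stable. By the two lists above, the only failure is a point $x \in C$ with $\ell(x) = m_0$: the ampleness condition only becomes weaker as $\epsilon$ increases, and rational tails of degree $\ge m_0+1$ and bridges of positive degree remain $\epsilon_+$-stable. Such an $x$ is a smooth, unmarked point by \Cref{D:prestableQmap}. To pass to the $\epsilon_+$ side, as in the modification used in the properness argument of \cite{ciocanfontanine2011stablequasimapsgitquotients}, we must trade the length $m_0$ at $x$ for a rational tree $T$ glued to $C$ at $x$. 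This tree carries total degree $\deg(L|_T) = \ell(x) = m_0$, and the new line bundle on $C$ is $L(-m_0 x)$, so the total degree $d$ is preserved. Concretely, in a family over a DVR whose generic fibre is $\epsilon_+$-stable and whose special fibre is this quasimap, one blows up the total space at $x$ and contracts. I would argue directly about the limiting $\epsilon_+$-stable configuration: the exceptional part over $x$ is a connected tree $T$ of rational curves, attached to the rest of the curve at one point and carrying no markings, since the markings avoid $x$.

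Next I show that $T$ must be a single $\bP^1$. Every leaf component of $T$, other than the component meeting $C$, is a rational tail of the new curve. By $\epsilon_+$-stability each such leaf has degree $\ge m_0$. Every component of $T$ with exactly two special points is a rational bridge, so by $\epsilon_+$-stability it has degree $>0$; in general each component must satisfy the ampleness condition for $\omega(\sum x_i)\otimes L^{\epsilon_+}$. Now suppose $T$ has more than one component. Then $T$ has at least one leaf distinct from the root component $T_0$, and that leaf already uses degree $\ge m_0$. There are two cases. If $T$ has two or more leaves away from $T_0$, the total degree is at least $2m_0 > m_0$, a contradiction. If $T$ has exactly one such leaf, then $T$ is a chain whose root component $T_0$ has exactly two special points (the attaching node to $C$ and the next node), so $T_0$ is a rational bridge and needs positive degree. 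This forces the total degree to be $> m_0$, again a contradiction. Hence $T \cong \bP^1$, a single rational tail of degree exactly $m_0$. The same count shows that, on this tail, $L$ has no basepoint of length $m_0$ (a degree-$m_0$ tail with a length-$m_0$ basepoint has a constant map, which is the strictly semistable object); this is consistent with the lists above.

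The main obstacle is the first step: justifying that the new components really form a tree attached at the single point $x$, with degree exactly $\ell(x)$ and no other changes to $C$. This is the statement that the $\epsilon_+$-modification is local at the bad basepoints. I would handle it by quoting the local structure of the modification from \cite{ciocanfontanine2011stablequasimapsgitquotients}, which gives the degree bookkeeping $\deg L|_T = \ell(x)$ and the disjointness from nodes and markings. I would also note that the separate basepoints are treated independently, since distinct basepoints give disjoint trees. After that, the degree count is elementary.
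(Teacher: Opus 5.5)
Your proposal is correct and takes essentially the same route as the paper. Both arguments assume that the $\epsilon_+$-modification replaces the length-$m_0$ basepoint by a rational tree of total degree $m_0$ attached at that point. Both then use that the leaves of the tree are rational tails, which need degree $\ge m_0$, while rational bridges need positive degree. Your explicit case split is a more careful version of the paper's one-line argument, which only spells out the bridge case: two or more leaves away from the root give total degree $\ge 2m_0$, and a chain whose root component is a bridge gives total degree $\ge m_0+1$.
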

    \begin{proof}
        Let $C'$ be a component of $C$. Suppose the degree of the quasimap on the component $C'$ is $d'$, and there is a length $m_0$ basepoint on $C'$, so $m_0 \le d'$. As you cross the wall, the basepoint becomes unstable, and the curve develops a tree at the former basepoint. The degree on the component $C'$ becomes $d'-m_0$, and the new tree has total degree $m_0$. Suppose a more complicated tree than just a single $\bP^1$ bubble occurs. Then, any rational tail must have degree $\ge m_0$ under the $\epsilon_+$ stability condition, forcing rational bridges in the tree to have non-positive degree, which contradicts $\epsilon_+$-stability.
    \end{proof}

\begin{figure}[H]
    \centering
\begin{tikzpicture}[scale=1.0]
  \tikzset{
    surface/.style={thick},
    cross_front/.style={gray!70, thick},
    cross_back/.style={gray!70, thick, dashed},
    dot_green/.style={circle, fill=cyan!40, inner sep=1.8pt},
    dot_blue/.style={circle, fill=blue!80!black, inner sep=1.8pt},
    prog_arrow/.style={-{Stealth[scale=1.5]}, line width=0.8pt}
  }

  \begin{scope}[shift={(7,1.5)}]
    \draw[surface] (0,0) ellipse (1.5 and 0.8);
    \draw[surface] (-1.8, 0.8) to[out=-53,in=53] (-1.8, -0.8);
    \draw[cross_front] (-0.5, 0.75) arc (90:270:0.25 and 0.75);
    \draw[cross_back] (-0.5, 0.75) arc (90:-90:0.25 and 0.75);
  \end{scope}

  \begin{scope}[shift={(0,1.5)}]
    \draw[surface] (0, 0.8) to[out=-53,in=53] (0, -0.8);
    \node[dot_blue] at (0.28, 0) {};
  \end{scope}

  \begin{scope}[shift={(-3.5,1.5)}]
    \draw[surface, color=white] (0, 0.8) to[out=-53,in=53] (0, -0.8);
    \node[white] at (0.28, 0) {};
  \end{scope}

  \begin{scope}[shift={(2.5, 0)}]
    \draw[thick] (0, 0) -- (0, 3);
    \node[anchor=south] at (0, -0.5) {$\epsilon_0$};
  \end{scope}

    \node[draw, line width=1pt, minimum width=0.9cm, minimum height=0.7cm] at (-2, 2.5) {$\epsilon_-$};

    \node[draw, line width=1pt, minimum width=0.9cm, minimum height=0.7cm] at (4, 2.5) {$\epsilon_+$};

\end{tikzpicture}
\caption{Wall-crossing behaviour when crossing an $\epsilon_0 = 1/m_0$ wall from the $\epsilon_-$ chamber on the left to the $\epsilon_+$ chamber on the right. The dark blue dot is a basepoint of length $m_0$.}
\end{figure}
    
\section{The Moduli Stack of $\epsilon_0$-Semistable Quasimaps to $\bP^N$}\label{S:moduli-stack}

In this section, we give a description of our moduli stack defined by the $\epsilon_0$-semistability condition, and show it is algebraic, of finite type and has affine diagonal. We also describe contraction morphisms which are used throughout the rest of the paper.

\subsection{Description of the Moduli Stack} Given the two stacks $\Qmapplus$ and $\Qmapminus$, we construct a larger algebraic stack $\bigQmap$ that contains both $\Qmapminus$ and $\Qmapplus$ as open substacks, as pictured in the following diagram.

\[
\begin{tikzcd}[row sep=tiny, column sep=tiny]
                                           & \bigQmap &                                             \\
\Qmapplus \arrow[ru, "\subset", phantom, sloped, rotate=-30] &            & \Qmapminus \arrow[lu, "\supset", phantom, sloped, rotate=30]
\end{tikzcd}
\]

As discussed in the previous section, the effect of crossing the wall $\epsilon_0 = 1/m_0$ from left to right replaces length $m_0$ basepoints by degree $m_0$ rational tails. Thus, $\bigQmap$ should include quasimaps with both degree $m_0$ rational tails and length $m_0$ base points. To find objects with infinite automorphism groups, we consider quasimaps that may have a degree $m_0$ rational tail containing a length $m_0$ basepoint. To do this, we impose the following stability condition. 

\begin{defn}\label{D:e0-stability}
    Let $(C, x_1, . . . , x_n, L, \{s_i\}_{i = 0}^N)$ be a prestable quasimap to $\bP^N$. We say this quasimap is \textbf{$\epsilon_0$-semistable} if
    \begin{enumerate}
        \item $\omega_C(x_1+ \ldots + x_n) \otimes L^{\epsilon_0 + \delta}$ is ample, where $0 < \delta << 1$, and 
        \item for every $x \in C$ we have $\ell(x) \leq 1/\epsilon_0$, where $\ell(x)$ is the length as defined in \Cref{D:e-stability}.
    \end{enumerate}

We denote the stack of $n$-pointed $\epsilon_0$-semistable quasimaps of genus $g$ and degree $d$ by $\bigQmap$. When $\epsilon_0$ does not lie on a wall, this definition recovers the notion of $\epsilon$-stability in \Cref{D:e-stability}. A family of prestable quasimaps is $\epsilon_0$-semistable if every geometric fiber is $\epsilon_0$-semistable. If we fix $\epsilon_0 = 1/m_0$, the $\epsilon_0$-semistability condition gives

\begin{itemize}
    \item The degree of $L$ restricted to a rational tail must be $\ge m_0$.
    \item The degree of $L$ restricted to a rational bridge must be strictly positive. This is independent of $\epsilon_0$.
    \item The length of a basepoint must be $\leq m_0$.
\end{itemize}
\end{defn}

In particular, the moduli stack $\bigQmap$ contains \emph{strictly semistable} objects with infinite automorphism groups, namely quasimaps with at least one degree $m_0$ rational tail containing a length $m_0$ basepoint.

\begin{prop}\label{P:alg-stack}
The stack of $\epsilon_0$-semistable quasimaps is an algebraic stack, containing both $\Qmapplus$ and $\Qmapminus$ as open substacks.
\end{prop}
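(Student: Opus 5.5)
The plan is to realize $\bigQmap$ as an open substack of the prestable stack $\QmapPre$, and to show that $\QmapPre$ is algebraic. For algebraicity of $\QmapPre$, I would use the forgetful morphism to $\mathfrak{Pic}_{g,n}$, the Picard stack of line bundles on the universal prestable $n$-pointed curve over $\mathfrak{M}_{g,n}$. This stack is algebraic and locally of finite type, since $\mathfrak{M}_{g,n}$ is algebraic and the relative Picard stack of a flat proper family of curves is algebraic. Over $\mathfrak{Pic}_{g,n}$, giving sections $s_0,\dots,s_N$ of the universal line bundle $\cL$ is representable by the abelian cone (total space) $\Spec \operatorname{Sym} R^1\pi_*(\omega_\pi\otimes \cL^\vee)^{\oplus N+1}$. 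Since $\pi$ is flat, proper and of relative dimension one, cohomology and base change shows this cone represents the functor $T\mapsto \Gamma(\cC_T,\cL_T)^{\oplus N+1}$. This is the standard argument of \cite{ciocanfontanine2011stablequasimapsgitquotients}. The remaining prestability condition says the base locus is finite and avoids nodes and markings. It is open: the base locus $Z\subset \cC$ is closed, and its failure to be quasi-finite over the base is a closed condition by upper semicontinuity of fiber dimension, since $\pi$ is proper. Likewise, the images in the base of $Z\cap \mathrm{Sing}(\pi)$ and of $Z\cap x_i$ are closed. Hence $\QmapPre$ is an open substack of this cone and is algebraic.

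Next I would show that both conditions of \Cref{D:e0-stability} are open in families. For ampleness of $\omega_{\cC/S}(\sum x_i)\otimes \cL^{\epsilon_0+\delta}$, I fix $\delta$ rational and small, depending only on $(g,n,d)$, and clear denominators to get an honest line bundle $\omega^{k}(\sum x_i)^{k}\otimes\cL^{k(\epsilon_0+\delta)}$. Ampleness on fibers is open in a proper flat family (EGA III, 4.7.1). On fibers, the condition reduces to the combinatorial degree inequalities listed after the definition, so it is independent of the choice of $\delta$ in this range. For the length condition, $\ell(x)\le m_0$ for all $x$, I use upper semicontinuity of the fiberwise length of $\coker(\cO^{\oplus N+1}\to\cL)$ on the finite base-locus scheme $Z\to S$. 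Explicitly, $x\mapsto \mathrm{length}_x(Z_s)$ is upper semicontinuous on a quasi-finite $Z$, e.g.\ by working étale locally where $Z$ becomes finite over $S$ and using semicontinuity of the fiber rank of $\pi_*\cO_Z$ on connected pieces. So the locus where some point has length $\ge m_0+1$ is closed, and its image is closed by properness. Together these give that $\bigQmap$ is an open substack of $\QmapPre$, hence algebraic.

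Finally, I would show that $\Qmapplus$ and $\Qmapminus$ are open in $\bigQmap$. Each is an open substack of $\QmapPre$ by the same openness arguments, applied with $\epsilon_\pm$ in place of $\epsilon_0+\delta$ and with length bounds $m_0-1$ and $m_0$ respectively. It then suffices to check the inclusions on geometric points. For this I compare the tables: $\epsilon_-$-stability requires tails of degree $\ge m_0+1 \ge m_0$ and length $\le m_0$, and $\epsilon_+$-stability requires tails of degree $\ge m_0$ and length $\le m_0-1\le m_0$. In both cases the bridge condition is identical, so both stable loci satisfy the $\epsilon_0$-semistability conditions.

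I expect the main obstacle to be the rigorous openness of the length condition, specifically handling the upper semicontinuity of lengths at basepoints that collide in the limit. The fallback is to phrase the condition as the vanishing of a Fitting-ideal condition on $\coker(\cO^{\oplus N+1}\to\cL)$ along $Z$, or equivalently via the morphism from $Z$ to the relative symmetric product.
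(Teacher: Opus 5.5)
Your proposal is correct and follows the same route as the paper: $\bigQmap$ is exhibited as an open substack of the algebraic stack $\QmapPre$ because the ampleness and length conditions are open, and $\Qmapplus$ and $\Qmapminus$ are then open substacks of it. The difference is only in detail. The paper cites algebraicity of $\QmapPre$ and simply asserts the openness statements, whereas you supply the standard arguments behind them: the Picard-stack/abelian-cone construction, EGA III 4.7.1 for ampleness, and upper semicontinuity of lengths on the finite base locus.
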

\begin{proof} The stack $\QmapPre$ of prestable quasimaps is algebraic \cite[Section 2.4.1]{cheong2015orbifoldquasimaptheory}. Conditions (1) and (2) of \Cref{D:e0-stability} are open conditions, so $\bigQmap$ is an open substack of $\QmapPre$ and hence algebraic. The stack $\Qmapminus$ is an open substack of $\bigQmap$, cut out by the open condition that there are no rational tails of degree $m_0$. Similarily, $\Qmapplus$ is also an open substack of $\bigQmap$, cut out by the open condition that there are no basepoints of length $m_0$.
\end{proof}

\begin{prop}\label{T:finite-type}
The algebraic stack $\bigQmap$ is of finite type.
\end{prop}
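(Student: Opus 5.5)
Since $\bigQmap$ is an open substack of the algebraic stack $\QmapPre$ (\Cref{P:alg-stack}), it is locally of finite type. It remains to show it is quasi-compact. I would bound the domain curves and line bundles so that every $\epsilon_0$-semistable quasimap lies in a finite-type open substack of a stack we already know is of finite type. The cleanest route is a map
$$\bigQmap \longrightarrow \mathfrak{M}_{g,n}$$
forgetting $L$ and the sections. Here $\mathfrak{M}_{g,n}$ is the stack of prestable curves. The plan is to show the image lies in a quasi-compact open $\mathfrak{M}_{g,n}^{\le k}$ and that the map is of finite type over it.

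\textbf{Step 1: bounding the curve.} Fix an $\epsilon_0$-semistable quasimap. By condition (1) of \Cref{D:e0-stability}, every unstable component of the underlying marked curve has positive $L$-degree: rational tails have degree $\ge m_0 \ge 1$ and rational bridges have degree $>0$. Since $L$ has nonnegative degree on every component (it has sections not identically zero there) and total degree $d$, the number of such components is at most $d$. Contracting them yields a stable curve, or an empty stabilization in the exceptional genus-0 cases, which we handle by adding auxiliary markings as usual. Hence the number of irreducible components of $C$ is bounded by a constant depending only on $(g,n,d)$. Prestable curves of genus $g$ with $n$ markings and at most $k$ components form a quasi-compact open substack $\mathfrak{M}_{g,n}^{\le k}$ of finite type.

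\textbf{Step 2: bounding $(L,\{s_i\})$ relative to the curve.} The degree of $L$ on each component lies in $[0,d]$, so only finitely many multidegrees occur. Over $\mathfrak{M}_{g,n}^{\le k}$, the relative Picard stack with fixed multidegree is of finite type. Over it, the sections $(s_0,\dots,s_N)$ are parametrized by the abelian cone $\Spec \operatorname{Sym}(R^1\pi_*(\cL^\vee\otimes\omega)^{\oplus N+1})$, which is of finite type over the Picard stack. The prestable quasimap stack of fixed multidegree over $\mathfrak{M}_{g,n}^{\le k}$ is an open substack of this cone, hence of finite type. A finite union of these contains $\bigQmap$ as an open substack. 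An open substack of a noetherian finite-type stack is quasi-compact, so $\bigQmap$ is of finite type.

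\textbf{Expected obstacle.} The main point needing care is Step 1: the combinatorial bound on components must hold uniformly, including components with base points of length $m_0$ and degree-$m_0$ tails (the only difference from the $\epsilon_\pm$ cases), and the genus-0 edge cases $(0,2)$ with no stable model need separate handling. Alternatively, one could avoid recomputing by noting $\bigQmap \subset \QmapO^{\epsilon'}$-prestable data with $\epsilon'=\epsilon_0+\delta$-ampleness, and citing the boundedness argument in the proof of \Cref{T:e-stableDM} \cite[Theorem 7.1.6]{ciocanfontanine2011stablequasimapsgitquotients}. That argument uses only the ampleness of $\omega_C(\sum x_i)\otimes L^{\epsilon}$ for a fixed positive $\epsilon$, which condition (1) provides with $\epsilon=\epsilon_0+\delta$.
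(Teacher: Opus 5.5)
Your argument is correct, but it takes a different route from the paper's. The paper does not bound curves, Picard data and sections directly. It uses only that there are boundedly many degree $m_0$ rational tails, and adds auxiliary markings so that each such tail carries one. A marked degree $m_0$ tail is a positive-degree rational bridge, and the length bound $\ell\le m_0$ is the same for $\epsilon_-$ and $\epsilon_0$, so the result is $\epsilon_-$-stable. This gives a surjective forgetful morphism onto $\bigQmap$ from an open substack of $\QmapO^{\epsilon_-}_{g,n+k}(\bP^N,d)$, which is of finite type by \Cref{T:e-stableDM}.

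The paper's route is shorter and reuses the known result. It works on the $\epsilon_-$ side precisely because the length condition already matches there. That is also why your closing remark can only reuse the boundedness part of the $\epsilon_0+\delta$ argument, not an inclusion: length $m_0$ base points violate $\ell\le 1/(\epsilon_0+\delta)$. Your route is self-contained and isolates what boundedness really needs, namely ampleness plus nonnegative component degrees. The cost is citing standard quasi-compactness of $\mathfrak{M}_{g,n}^{\le k}$ and of bounded-degree Picard stacks.

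Two spots should be tightened. First, contracting the initially unstable components need not give a stable curve, since stabilization is iterative. The identity $\sum_v(2g_v-2+n_v)=2g-2+n$ directly gives $\#\{\text{stable components}\}\le 2g-2+n+2\,\#\{\text{unstable components}\}$, a uniform bound with no genus-$0$ special cases. Second, ``fixed multidegree'' is not an open condition when the dual graph varies. Use instead the open, generization-stable substack where every component degree lies in $[0,d]$, which is quasi-compact over $\mathfrak{M}_{g,n}^{\le k}$.
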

\begin{proof}
    To show that $\bigQmap$ is of finite type, we restrict our attention to rational components as other components will be bounded by the genus. The stack $\bigQmap$ is an open substack of $\QmapPre$, cut out by the following open conditions:

    \begin{enumerate}
        \item There are no unmarked rational tails of degree $< m_0$.
        \item There are no rational bridges of degree 0.
        \item There are no basepoints of length $> m_0$.
    \end{enumerate}

    Conditions (1) and (2) above imply that the number of irreducible rational components is $< k$ for some $k>0$. In particular, the number of rational tails with degree $m_0$ is bounded above by $k$.
    
    Consider the open substack of $\frak{QMap}^{\epsilon_-}_{g, n+k}(\bP^N, d)$ such that when you forget the last $k$ markings, the resulting quasimap lies in $\bigQmap$. This open substack is of finite type since $\frak{QMap}^{\epsilon_-}_{g, n+k}(\bP^N, d)$ is by \Cref{T:e-stableDM}. The morphism defined by forgetting the last $k$ markings defines a surjection onto $\bigQmap$.

    In particular, we have $<k$ unmarked rational tails of degree $m_0$ in $\bigQmap$. In the preimage under the surjective forgetful morphism described above, we can add at least one of the last $k$ markings to each of these tails. This forgetful morphism gives a surjection from a stack of finite type onto $\bigQmap$, showing that $\bigQmap$ is of finite type as well.
\end{proof}

\begin{prop}\label{T:aff-diag}
The algebraic stack $\bigQmap$ has affine diagonal.
\end{prop}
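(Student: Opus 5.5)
To show that $\bigQmap$ has affine diagonal, I will work with the isomorphism functor. Take two families $\xi_1=(\cC_1,x_\bullet,\cL_1,s^{(1)})$ and $\xi_2=(\cC_2,x_\bullet,\cL_2,s^{(2)})$ over an affine scheme $S$. The goal is to show that $\mathrm{Isom}_S(\xi_1,\xi_2)$ is representable by a scheme affine over $S$. An isomorphism consists of two pieces of data:
\begin{itemize}
\item an isomorphism of pointed curves $\phi:\cC_1\to\cC_2$;
\item an isomorphism of line bundles $\phi^*\cL_2\cong\cL_1$ carrying $s^{(2)}$ to $s^{(1)}$.
\end{itemize}
I will factor the Isom space accordingly, through the space $I_\cC=\mathrm{Isom}_S((\cC_1,x_\bullet),(\cC_2,x_\bullet))$ of isomorphisms of the underlying pointed prestable curves. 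Since the map of line bundle data is affine over $I_\cC$, the problem splits into showing $I_\cC$ is affine over $S$ and that the fibre over it is affine.

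\textbf{Step 1: reduce to stable curves.} I will reduce to a situation where the curves carry an ample line bundle. By condition (1) of \Cref{D:e0-stability}, $\omega_{\cC/S}(\sum x_i)\otimes\cL^{\epsilon_0+\delta}$ is relatively ample for a suitable rational $\delta$. Clearing denominators gives a canonical relatively very ample line bundle
\[
A=\bigl(\omega_{\cC/S}(\textstyle\sum x_i)\bigr)^{a}\otimes\cL^{b}.
\]
Isomorphisms of the full data $(\cC,x_\bullet,\cL)$ preserve $A$. The stack of triples $(\cC,x_\bullet,\cL)$ with this ampleness condition therefore embeds, after choosing a large multiple, into a quotient $[H/\mathrm{PGL}]$ or $[H/\mathrm{GL}]$. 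Here $H$ is a locally closed subscheme of a Hilbert scheme (or a Quot scheme) parametrizing embedded curves together with a line bundle. Quotients of quasi-projective schemes by $\mathrm{GL}_r$ have affine diagonal, because $\mathrm{GL}_r$ is affine and the action map $G\times H\to H\times H$ is affine when $H$ is separated. I therefore expect this stack, which is the open locus of the stack of prestable curves with a line bundle where $A$ is ample, to have affine diagonal.

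\textbf{Step 2: handle the $\bC^*$-scalings and the sections.} The second step treats the forgetful map from $\bigQmap$ to the stack of triples $(\cC,x_\bullet,\cL)$. Over a fixed triple, the sections $s_0,\dots,s_N$ are a point of the vector bundle $\pi_*\cL^{\oplus N+1}$; more carefully, they form a cone that is affine over the base. The equation $\phi^*s^{(2)}=s^{(1)}$ cuts out a closed subscheme, and the condition that the zero locus is finite and avoids the nodes is open. Since $\bigQmap$ is open in $\QmapPre$, it suffices to show that the forgetful map is affine, or at least that it has affine relative diagonal. Its relative diagonal is a closed immersion, being an equalizer of sections of a separated scheme. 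Composing a closed relative diagonal with the affine diagonal of the base stack gives an affine diagonal.

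\textbf{Main obstacle.} The hard part is Step 1. The ample bundle $A$ involves $\cL$, and the underlying curve alone need not be stable, since there are genuine rational tails of degree $m_0$ with $\bC^*$-automorphisms scaling the fibre. The embedding therefore has to be carried out with $\mathrm{GL}$ rather than $\mathrm{PGL}$, using $\pi_*A^{k}$, a vector bundle of fixed rank for $k\gg0$ by finite type (\Cref{T:finite-type}) and boundedness. Then $\bigQmap$ is presented as an open substack of $[Q/\mathrm{GL}_r]$ with $Q$ quasi-projective. This gives affine diagonal directly, since such quotients have affine diagonal ($Q\times\mathrm{GL}_r\to Q\times Q$ is affine as $Q$ is separated and $\mathrm{GL}_r$ is affine). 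I would carry out this presentation explicitly by rigidifying with a basis of $H^0(A^k)$ and recording $(\cC\hookrightarrow\bP^{r-1},\cL,s)$.
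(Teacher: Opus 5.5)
Your argument is correct, but it factors the diagonal differently from the paper.

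\textbf{The paper's route.} It forgets both $\cL$ and the sections, mapping to the stack $\mathfrak{M}^{\mathrm{pol}}$ of marked curves polarized by $A=\omega(\sum x_i)^a\otimes\cL^b$. It cites affine diagonal of $\mathfrak{M}^{\mathrm{pol}}$. It then proves the forgetful map is representable (rescaling $\cL$ while fixing the sections is trivial) and separated by a valuative-criterion argument, so the relative diagonal is a closed immersion.

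\textbf{Your route.} You forget only the sections, mapping to the stack $\mathcal{T}$ of triples $(\cC,x_\bullet,\cL)$ with $A$ relatively ample. This makes the relative step essentially free. Tuples of $N+1$ sections of a fixed line bundle form an affine linear cone over the base, so the relative diagonal is a closed immersion. You need no valuative criterion, and you never have to extend an isomorphism of line bundles from $K$ to $R$, where a priori one must rule out twists supported on the special fibre.

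\textbf{The cost, and the one unjustified step.} Affine diagonal of $\mathcal{T}$ is no longer a citation. Your $[Q/\mathrm{GL}_r]$ presentation would supply it, but it is heavier than needed. The claim that $Q$ is quasi-projective is left unjustified. $Q$ must record $\cL$ together with an isomorphism $A^k\cong\cO(1)|_{\cC}$, so it involves $bk$-torsion Picard data of nodal families. For $\mathcal{T}$ itself, with no sections recorded, the rigidified objects still have $\mu_{bk}$ automorphisms. So the parameter space in your Step 1 cannot literally be a locally closed subscheme of a Hilbert or Quot scheme; it only becomes an algebraic space once the sections are included, as in your last paragraph.

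\textbf{A cheaper finish for Step 1.} Compose with $\mathcal{T}\to\mathfrak{M}^{\mathrm{pol}}$. Over an isomorphism $(\sigma,\chi)$ of polarized curves, the fibre of $\mathrm{Isom}_{\mathcal{T}}$ is the closed locus of those $\psi\in\mathrm{Isom}(\sigma^*\cL_2,\cL_1)$ inducing $\chi$ on $A$. The scheme $\mathrm{Isom}(\sigma^*\cL_2,\cL_1)$ is affine over the base, since it is cut out by $\psi'\psi=1$ and $\psi\psi'=1$ inside a product of two linear cones of homomorphisms. Combined with the affine diagonal of $\mathfrak{M}^{\mathrm{pol}}$, this gives affine diagonal of $\mathcal{T}$ without any boundedness input or explicit $\mathrm{GL}_r$-presentation.
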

\begin{proof}

    We have a forgetful morphism
    $$f: \bigQmap \xrightarrow{}\frak{M}^{\rm{pol}}$$
    to the stack of marked polarized nodal curves, given by forgetting the sections defining the quasimap to $\bP^N$, and remembering the underlying marked nodal curve with the ample line bundle given by condition (1) of \Cref{D:e0-stability}. The stack $\frak{M}^{\rm{pol}}$ has affine diagonal \cite[Proposition 2.3]{halpernleistner2025quantum}, so it suffices to show the relative diagonal
    \[\Delta_f: \bigQmap \xrightarrow{} \bigQmap \times_{\frak{M}^{\rm{pol}}} \bigQmap\]
    is affine. By \cite[Tag 04YS]{stacks-project}, this follows if $f$ is representable by algebraic spaces and separated.
    
    To show that $f$ is representable by algebraic spaces, we show that for any scheme $T$ and any morphism $T \to \frak{M}^{\rm{pol}}$, the fiber product \[\bigQmap \times_{\frak{M}^{\rm{pol}}} T\] is an algebraic space. The fiber product parametrizes, over any $T' \xrightarrow{} T$, the additional data of $N+1$ sections $s_0, \ldots, s_N \in \Gamma(C_{T'}, L)$ that satisfy the $\epsilon_0$-semistability condition.  We observe that the automorphism group of any such quasimap over a fixed polarized marked curve is trivial. Indeed, suppose we have an automorphism of a quasimap that is the identity on the underlying curve. Automorphisms of the line bundle and sections that fix the underlying curve act by scaling the line bundle. Since the sections are all not identically zero, this will change the sections unless we scale trivially. 

    To show that $f$ is separated, we apply the valuative criterion. Let $R$ be a discrete valuation ring with fraction field $K$, and consider a commutative diagram
\begin{center}
\begin{tikzcd}
\bigQmap \arrow[rr, "f"]                          &  & \frak{M}^{\rm{pol}}                         \\
\Spec(K) \arrow[rr] \arrow[u] &  & \Spec(R) \arrow[u] \arrow[llu, dashed]
\end{tikzcd}
   
\end{center}
where the solid arrows are given. We must show that there exists at most one lift $\Spec(R) \to \bigQmap$ making the diagram commute. Suppose we have two such lifts, corresponding to quasimaps $\mu_1, \mu_2: \mathcal{C} \to [\mathbb{C}^{N+1}/\mathbb{C}^*]$ over $\Spec(R)$ that agree on the generic fiber $\Spec(K)$, for $\mathcal{C} \xrightarrow{} \Spec(R)$ a marked polarized nodal curve. The quasimaps $\mu_1$ and $\mu_2$ are each determined by $N+1$ sections of a line bundle, satisfying the $\epsilon_0$-semistability condition. The total space of the family $\mathcal{C}$ over $\Spec(R)$ is reduced, so sections are uniquely determined by their restriction to a dense open subset. Thus the line bundles and sections defining $\mu_1$ and $\mu_2$, which agree over $\Spec(K)$ must agree everywhere.
\end{proof}

\subsection{Contraction Morphisms for $\epsilon_0$-Semistable Quasimaps to $\bP^N$} \label{S:contractions} In this section we recall the construction of a contraction morphism on rational tails from $\Qmapplus$ to $\Qmapminus$ following \cite[Proposition 2.21]{Toda_2011}. This morphism is also described pointwise in \cite[Section 2.6]{rabano2024contractionmorphismmapsquasimaps} and \cite[Section 3.2.2]{CiocanFontanineKim2014}. The version on families can be found \cite[Theorem 7.1]{Popa_2003} using the language of Quot schemes, or in \cite[Proposition 1.3]{mustata2007intermediate}. These contraction morphisms exist specifically for quasimaps where the target is $\bP^N$. We first give a description of this contraction morphism in families, before extending the contraction morphism to the larger stack $\bigQmap$.

\begin{thm}\cite[Proposition 2.21, Lemma 2.23]{Toda_2011} \label{T: contraction_pm} There is a natural surjective morphism
    $$c_{\bP^N}: \Qmapplus \xrightarrow{} \Qmapminus. $$
\end{thm}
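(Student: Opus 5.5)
We construct the morphism $c_{\bP^N}$ on families, using the standard recipe of contracting rational tails of degree $m_0$ and inserting basepoints of length $m_0$. Let $(\pi:\cC_S\to S, \{x_i\}, \cL_S, \{s_j\})$ be a family of $\epsilon_+$-stable quasimaps.

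\emph{Step 1: contract the curve.} The line bundle $\omega_{\cC_S/S}(\sum x_i)\otimes \cL_S^{\epsilon_-}$ (rationally, so we use a suitable tensor power) is relatively nef. Its relative degree is positive on every component except the degree $m_0$ rational tails. Indeed, on such a tail the degree is $-1+m_0\epsilon_-<0$, so we first correct it. We do so by replacing $\cL_S$ with $\cL_S$ twisted appropriately, or by using the line bundle $\omega(\sum x_i)\otimes\cL^{\epsilon_0}$, which has degree exactly $0$ on degree $m_0$ tails. Taking relative $\Proj$ of its section ring over $S$ gives $\cC_S\to\cC'_S$, which contracts exactly the degree $m_0$ rational tails to smooth points. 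Compatibility with base change follows from cohomology and base change, since higher cohomology of large powers vanishes fibrewise. Alternatively, we can invoke the stabilization morphism for prestable curves with the tails marked.

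\emph{Step 2: push forward the line bundle and sections.} Let $p:\cC_S\to\cC'_S$. Set $\cL'_S := (p_*\cL_S(-\text{tails}))^{\vee\vee}$, or more concretely, define $\cL'$ as the unique line bundle on $\cC'_S$ agreeing with $\cL_S$ away from the contracted tails and having degree $d$. Since $\cC'_S$ is smooth along the image points (relative Cartier divisors given by the images of the attaching nodes), this is a twist $p_*(\cL_S|_{\cC_S\setminus T})$ extended by $\cO(m_0\cdot \sigma)$, where $\sigma$ is the image section. The sections $s_j$ restricted away from the tails extend uniquely to sections of $\cL'$. Extension holds by normality or by Hartogs' theorem in codimension one fibrewise together with flatness. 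They vanish to order exactly $m_0$ at $\sigma$, producing a basepoint of length $\le m_0$ (equal to $m_0$ plus any possible extra length, which is ruled out because $\epsilon_+$ bounds old basepoint lengths by $m_0-1$ and they lie away from the node). One then checks fibrewise that the result is $\epsilon_-$-stable. Tails of degree $\ge m_0+1$ survive. The removed tails leave bridges or components whose positivity is unchanged. Lengths are at most $m_0$.

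\emph{Step 3: naturality and surjectivity.} The construction commutes with base change, so it defines a morphism of stacks. For surjectivity on geometric points, start with an $\epsilon_-$-stable quasimap and, at each length $m_0$ basepoint $x$, glue a $\bP^1$ at $x$. Put on it a degree $m_0$ line bundle with sections given by the principal parts of the $s_j$ at $x$, which are not all vanishing to higher order. Twist the bundle on the main component down by $m_0 x$. The result is $\epsilon_+$-stable and maps to the given point.

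The main obstacle is Step 2 in families. We must make $\cL'$ and the extended sections compatible with arbitrary base change, and show that the length at the new point is exactly $m_0$ in every fiber, even when tails appear only in special fibers. I would handle this as in Toda and Popa: describe the quasimap as a quotient $\cO^{N+1}\to\cL$, push forward to get a quotient family on $\cC'_S$, and use flatness of Quot-scheme families.
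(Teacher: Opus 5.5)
Your construction is essentially the paper's. You contract the degree $m_0$ tails by the relative $\Proj$ of powers of $\omega(\sum x_i)\otimes\cL^{\epsilon_0}$; the paper uses $\omega(\sum_i x_i)^{m_0}\otimes\cL$, which is the same bundle up to a power. You replace $L$ by $L|_{\hat C}(m_0p)$ and multiply the sections by the canonical section. You check $\epsilon_-$-stability fibrewise. For full base-change compatibility you defer to Toda and Popa, as the paper itself does. Two details in your write-up need correcting.

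First, describing $\cL'$ downstairs as a twist by $\cO(m_0\sigma)$ only makes sense when the tails form a flat family over all of $S$. When a tail occurs only over a closed subset of $S$, its image in $\cC'_S$ is neither a section nor a divisor. For example, over a DVR with the tail only in the special fibre, the image is a single point of codimension two in the surface $\cC'_S$. Your Hartogs-type fallback for extending the bundle and the sections needs a normal base, and atlases of $\Qmapplus$ need not be normal. The paper instead twists upstairs. The bundle $\cL\otimes\cO_{\cC}(m_0\cT_{m_0})$ has degree $0$ on the contracted tails, so it descends to $\hat\cL$. The $\hat s_j$ are the images of the $s_j$ under $\cL\to\cL\otimes\cO_{\cC}(m_0\cT_{m_0})$. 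This produces $\hat\cL$ and the $\hat s_j$ whether or not the tails persist over all of $S$.

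Second, your surjectivity recipe is not safe as written. Only the values of the tail sections at the node are constrained: they must equal the leading coefficients $a_j$ of $\hat s_j=a_jz^{m_0}+\cdots$. Beyond that, they must have no common zero of length $m_0$, and for $m_0=1$ no common zero at all. A choice read off from the jets of the $\hat s_j$ can violate this. If $\hat s_j=a_jz^{m_0}$ near $x$, the jet recipe gives the sections $a_jy^{m_0}$ on $T$. That is exactly the spinning tail with a length-$m_0$ basepoint at $[1:0]$, which is strictly $\epsilon_0$-semistable and not a point of $\Qmapplus$. Instead, with $a_0\neq 0$, take
\[
s_0|_T=a_0y^{m_0},\qquad s_1|_T=a_1y^{m_0}+x^{m_0},\qquad s_j|_T=a_jy^{m_0} \text{ for } j\ge 2.
\]
These have no common zero; the construction needs $N\ge 1$. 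You should also record the ampleness check on the main component, since it is not automatic. Each new node raises $\deg\omega(\sum x_i)$ by $1$ but lowers $\epsilon_+\deg L$ by $\epsilon_+m_0>1$. Positivity therefore uses the $\epsilon_-$ lower bounds, for instance that a former rational tail has degree at least $m_0+1$. With these fixes, your gluing argument gives a self-contained proof of surjectivity, where the paper simply cites Toda's Lemma 2.23.
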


This is called the contraction morphism for $\bP^N$. It can be described pointwise as follows: an $\epsilon_+$-stable quasimap $(C, L, \{s_i\}_{i=0}^N)$ becomes $\epsilon_-$ stable by contracting all degree $m_0$ rational tails, and replacing them with length $m_0$ basepoints. Let   $T_1, \ldots, T_\ell$ denote the rational tails of degree $m_0$ meeting the rest of the curve at a single point, and take
$$\hat{C} := \overline{C \setminus \sqcup_{i=1}^\ell T_i}, \text{ and } \hat{L}:= L|_{\hat{C}} \otimes \cO_{\hat{C}}(\sum_{i=1}^\ell m_0p_i).$$ The sections $\hat{s_j}$ are given as compositions
$$\cO_{\hat{C}} \xrightarrow{s_j} L|_{\hat{C}} \xrightarrow{} L|_{\hat{C}} \otimes \cO_{\hat{C}}(\sum_{i=1}^\ell m_0p_i). $$
The image of the $\epsilon_+$-stable quasimap under the contraction morphism $c_{\bP^N}$ is given by the $\epsilon_-$-stable quasimap $(\hat{C}, \hat{L}, \{\hat{s_i}\}_{i=0}^N)$.\\

We now describe how to make this construction work for families of quasimaps over a scheme $T$. Let $(\pi:\cC \rightarrow T, \{x_i: T \xrightarrow{} \cC\}_{i = 1}^n,\cL, \{s_j\}_{i = 0}^N)$ be an $\epsilon_+$ stable quasimap over a scheme $T$. Since $\cL$ is an effective divisor supported in the smooth locus of $\cC$ by definition, by  \cite[Theorem 3.6]{hassett2002modulispacesweightedpointed}, we get a map
$$f: \cC \xrightarrow{} \Proj_T\left(\bigoplus_{d \geq 0} {\pi_*}(\omega_{\cC}(\sum_i x_i)^{m_0} \otimes \cL)^{dM} \right) =: \hat{\cC}$$
to a flat family of nodal curves $\hat{\cC}$ induced by sections of $(\omega_{\cC}(\sum_i x_i)^{m_0} \otimes \cL_+)^M$ for sufficiently large $M$. This map collapses the locus along which $\omega_{\cC}(\sum_i x_i)^{m_0} \otimes \cL$ has degree zero, and maps its complement birationally onto its image. This locus is precisely the rational tails that have degree $m_0$. Let $\cO_{\cC}(\cT_{m_0})$ denote the line bundle associated to that locus, where $\cT_{m_0}$ is a component of the divisor of rational tails in the universal curve over $\cC$. The line bundle $\cL \otimes \cO_{\cC}(m_0\cT_{m_0})$ has degree zero on the rational tails being contracted, so it descends to a line bundle $\hat{\cL}$ on $\hat{\cC}$, and $N+1$ sections $(\hat{s_0}, \ldots, \hat{s_N})$ are given by the composition

$$f^*\cO_{\hat{\cC}}^{N+1} \xrightarrow{} \cO_{\cC}^{N+1} \xrightarrow{(s_0, \ldots, s_N)} \cL \xrightarrow{} \cL \otimes \cO_{\cC}(m_0\cT_{m_0}).$$

We are left to show that the $\epsilon_-$ stability condition holds, which can be checked pointwise. Let $\cC_t$ be the fibre over a point $t \in T$ that decomposes as $C \cup_{p_i} T_i$ where $f$ contracts the rational tail $T_i$ of degree $m_0$ and maps $C$ birationally onto its image. We have $\coker(s_0, \ldots, s_N)|_C \cong \coker(\hat{s_0}, \ldots, \hat{s_N})|_C \oplus \cO_{p_i}^{m_0}$, where $\cO_{p_i}$ is the skyscraper sheaf at the point $p_i$. Thus after contracting each tail $T_i$ of degree $m_0$, we've added basepoints of length $m_0$ at the points $p_i$, which satisfies $\epsilon_-$-stability.\\

Next, we extend the contraction morphism of \Cref{T: contraction_pm} to the moduli stack of $\epsilon_0$-semistable quasimaps. This morphism also contracts degree $m_0$ rational tails and replaces them with length $m_0$ basepoints.

\begin{thm} \label{T: e_0-contraction}
    There is a natural surjective morphism
    $$\tilde{c}_{\bP^N}: \bigQmap \xrightarrow{} \Qmapminus$$
\end{thm}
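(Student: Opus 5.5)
The plan is to repeat the family construction used for \Cref{T: contraction_pm} essentially word for word over $\bigQmap$. The main point to verify is that a rational tail of degree $m_0$ carrying a length $m_0$ basepoint contracts to something still $\epsilon_-$-stable.

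Let $(\pi:\cC\to T,\{x_i\},\cL,\{s_j\})$ be a family of $\epsilon_0$-semistable quasimaps over a scheme $T$. Consider the line bundle $\mathcal{M}:=\omega_{\cC/T}(\sum_i x_i)^{m_0}\otimes\cL$. On each geometric fibre it has nonnegative degree on every component. Indeed, for an unmarked rational tail the degree is $-m_0+\deg L\ge 0$ by condition (1) of \Cref{D:e0-stability}. For rational bridges the degree is $\deg L>0$, and on all other components it is positive. The degree is zero exactly on the rational tails of degree $m_0$. This nefness and degree pattern is all that was used before, so \cite[Theorem 3.6]{hassett2002modulispacesweightedpointed} again gives a morphism $f:\cC\to\hat\cC:=\Proj_T\bigoplus_{d}\pi_*\mathcal{M}^{dM}$ for $M\gg0$. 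Here $\hat\cC\to T$ is a flat family of nodal curves, $f$ contracts precisely the degree $m_0$ rational tails, and $f$ is an isomorphism elsewhere. As before, let $\cT_{m_0}\subset\cC$ be the divisor swept out by these tails. The bundle $\cL\otimes\cO_\cC(m_0\cT_{m_0})$ has degree zero on the contracted tails. On such a tail $T_i\cong\bP^1$ it is therefore trivial, and since $R^1$ of $\cO_{\bP^1}$ vanishes, it descends to a line bundle $\hat\cL$ on $\hat\cC$. The sections $\hat s_j$ are obtained by composing $s_j$ with $\cL\to\cL(m_0\cT_{m_0})$ and pushing forward, using $f_*\cO_\cC=\cO_{\hat\cC}$.

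The key new step is the pointwise verification of $\epsilon_-$-stability. Consider a fibre $C\cup_{p_i}T_i$. Away from the $p_i$, nothing changes, and the lengths there are $\le m_0$. The ampleness condition for $\epsilon_-$ holds as well, because all degree $m_0$ tails are gone, and the remaining tails and bridges satisfy the inequalities of the $\epsilon_-$ chamber listed in \Cref{S:preliminaries}. At $p_i$ the new basepoint must have length exactly $m_0$. To see this, note that the section $\hat s_j$ at $p_i$ is the restriction of $s_j|_C$ twisted by $m_0p_i$. Its values at $p_i$ are governed by the constant values of $s_j$ along $T_i$, via the descent identification of $\cL(m_0\cT_{m_0})|_{T_i}$ with $\cO_{T_i}$. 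Those values lie in $H^0(T_i,\cO)$ and equal the node values of $s_j|_{T_i}$ scaled by the trivialization. Hence $\ell(p_i)=m_0+\operatorname{ord}$, where the extra order vanishes provided $s|_C$ does not vanish at the node $p_i$. This holds because the prestable condition keeps basepoints off nodes. If $T_i$ itself carries a length $m_0$ basepoint (the strictly semistable case), then $s|_{T_i}$ vanishes to order $m_0$ at one point of $T_i$ and is nonzero at the node. The contraction simply discards that information. So $\ell(p_i)=m_0$ in all cases, which is allowed in the $\epsilon_-$ chamber. I expect this to be the main obstacle: making the descent and cokernel computation $\coker(s)|_C\cong\coker(\hat s)|_C\oplus\cO_{p_i}^{m_0}$ rigorous in families when the tail is not basepoint free. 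I would handle it by the local computation above on the smooth locus near $p_i$.

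Finally, several routine checks remain. The construction commutes with base change in $T$, because formation of the $\Proj$ commutes with base change for $M\gg0$ by cohomology and base change. It is also functorial for isomorphisms of families, so it defines a morphism of stacks $\tilde c_{\bP^N}$. This morphism restricts to $c_{\bP^N}$ on $\Qmapplus$ and to the identity on $\Qmapminus$, the latter because there are no tails to contract. Surjectivity then follows from the identity on the open substack $\Qmapminus\subset\bigQmap$.
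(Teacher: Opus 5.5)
Your construction of $\tilde{c}_{\bP^N}$ matches the paper's. Both use the same Hassett-type contraction $f\colon\cC\to\hat\cC=\Proj_T\bigoplus_{d}\pi_*\mathcal{M}^{dM}$ with $\mathcal{M}=\omega_{\cC/T}(\sum_i x_i)^{m_0}\otimes\cL$, the same descent of $\cL\otimes\cO_\cC(m_0\cT_{m_0})$, and a pointwise check of $\epsilon_-$-stability. Your verification that $\mathcal{M}$ is nef with degree zero exactly on the degree $m_0$ tails, and that a basepoint on a spinning tail is simply discarded, makes explicit what the paper leaves implicit.

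The genuine difference is surjectivity. The paper defers to the surjectivity of $c_{\bP^N}\colon\Qmapplus\to\Qmapminus$ (Toda, Lemma 2.23; Rabano, Theorem 6.0.1). That is, it relies on every $\epsilon_-$-stable quasimap being the contraction of an $\epsilon_+$-stable one, which means producing a degree $m_0$ tail over each length $m_0$ basepoint. You instead note that $\tilde{c}_{\bP^N}$ restricts to the identity on the open substack $\Qmapminus\subset\bigQmap$. This is correct. On that locus $\mathcal{M}=(\omega_{\cC/T}(\sum_i x_i)\otimes\cL^{1/m_0})^{m_0}$ is relatively ample precisely by $\epsilon_-$-stability, so $f$ is an isomorphism, $\cT_{m_0}=\emptyset$, and $(\hat\cL,\hat s)=(\cL,s)$. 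Hence $\tilde{c}_{\bP^N}$ has a section and is surjective. Your route is more elementary and is all the theorem needs; the paper's route gives the stronger statement that $\tilde{c}_{\bP^N}$ is already surjective on $\Qmapplus$.

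One sentence in your length computation is wrong and should be deleted, since it contradicts your own conclusion. Let $\sigma$ be the canonical section of $\cO_\cC(\cT_{m_0})$. The twisted sections $s_j\sigma^{m_0}$ vanish identically along $T_i$, so $\hat{s}_j(p_i)=0$ for all $j$. They are not ``the node values of $s_j|_{T_i}$ scaled by the trivialization''; if they were, $p_i$ would not be a basepoint at all. The correct justification is restriction to $C$: $\cO_\cC(\cT_{m_0})|_C\cong\cO_C(p_i)$ with $\sigma|_C$ vanishing simply at $p_i$. Hence $\hat{s}_j|_C=s_j|_C\cdot\sigma|_C^{m_0}$, and $\ell(p_i)=m_0+\min_j\operatorname{ord}_{p_i}(s_j|_C)=m_0$ because basepoints avoid nodes.

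You have also slightly misdiagnosed the obstacle in families. The basepoint on the tail plays no role, since descent of $\cL\otimes\cO_\cC(m_0\cT_{m_0})$ does not involve the sections. The real issue is that over an arbitrary base the tail locus need not be an effective Cartier divisor, for example in a constant family. The paper is equally brief on this point and relies on the cited family versions of the contraction.
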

\begin{proof} Let $(\pi:\cC_0 \rightarrow T, \{x_i: T \xrightarrow{} \cC_0\}_{i = 1}^n,\cL_0, \{s_j\}_{i = 0}^N)$ be an $\epsilon_0$-semistable quasimap over a scheme $T$. Exactly as described above for $\epsilon_+$ stable quasimaps, by  \cite[Theorem 3.6]{hassett2002modulispacesweightedpointed}, we get a map
$$f: \cC_0 \xrightarrow{} \Proj_T\left(\bigoplus_{d \geq 0} {\pi_*}(\omega_{\cC_0}(\sum_i x_i)^{m_0} \otimes \cL_0)^{dM} \right) =: \hat{\cC}$$
to a flat family of nodal curves $\hat{\cC}$ for sufficiently large $M$. As before, this map collapses the locus along which $\omega_{\cC_0}(\sum_i x_i)^{m_0} \otimes \cL_0$ has degree zero, and maps its complement birationally onto its image. That is, the map $f$ contracts rational tails that have degree $m_0$, where the line bundle $\omega_{\cC_0}(\sum_i x_i^{m_0}) \otimes \cL_0^{\epsilon_0}$ has degree zero. The construction of the line bundle and sections on $\hat{\cC}$ proceeds as described above on families of $\epsilon_+$-stable quasimaps. 

We are left to show that we have constructed an $\epsilon_-$-stable quasimap, which can be done pointwise. We have contracted all rational tails of degree $m_0$; these are exactly the components that become unstable when you move off of the wall $\epsilon_0$ to the $\epsilon_-$ chamber. In addition, the stability condition on basepoint length is the same for $\epsilon_0$-semistability as $\epsilon_-$- stability, so all basepoints of the $\epsilon_0$-semistable family do not become unstable under $\tilde{c}_{\bP^N}$. As described above for the contraction morphism $c_{\bP^N}$, the only new basepoints that we add to the $\epsilon_-$-stable family in the construction above are basepoints of length $m_0$ at the points that were previously attaching nodes to degree $m_0$ rational tails. The argument for surjectivity is the same as for the morphism $c_{\bP^N}$, see for instance \cite[Lemma 2.23]{Toda_2011} or \cite[Theorem 6.0.1]{rabano2024contractionmorphismmapsquasimaps}.
\end{proof}

\section{Good Moduli Space for $\bigQmap$}\label{S:GMS}

In this section, we show that the $\epsilon_0$-semistability condition of \Cref{D:e0-stability} is right notion to guarantee the existence of a proper good moduli space on the wall. 

\begin{thm}\label{thm A}
    $\bigQmap$ admits a proper good moduli space.
\end{thm}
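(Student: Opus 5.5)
We will apply the existence criterion of Alper--Halpern-Leistner--Heinloth \cite{Alper_2023_Existence}. An algebraic stack of finite type over $\bC$ with affine diagonal admits a separated good moduli space if and only if it is $\Theta$-reductive and $S$-complete. In characteristic zero, reductivity of stabilizers follows from $S$-completeness. The good moduli space is then proper exactly when the stack satisfies the existence part of the valuative criterion for properness. Since \Cref{T:finite-type} and \Cref{T:aff-diag} already give finite type and affine diagonal, the proof reduces to three filling statements for $\bigQmap$.
\begin{enumerate}
\item \textbf{$\Theta$-reductivity:} every map $\Theta_R\setminus\{0\}\to\bigQmap$ extends to $\Theta_R$.
\item \textbf{$S$-completeness:} every map $\overline{ST}_R\setminus\{0\}\to\bigQmap$ extends to $\overline{ST}_R$.
\item \textbf{Existence for properness:} every $K$-point extends over $R$ after a finite extension of $R$.
\end{enumerate}

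For existence, we use that $\Qmapplus$ is proper by \Cref{T:e-stableDM} and dense-open in $\bigQmap$. Given a $K$-point of $\bigQmap$, we would first move it to an $\epsilon_+$-stable family: over $K$, replace each length-$m_0$ basepoint by a degree $m_0$ rational tail, which is the reverse of the contraction in \Cref{T: e_0-contraction}. Alternatively, we may note that the existence part of the criterion only needs to be checked on a dense open substack. Properness of $\Qmapplus$ then extends the family over a finite extension of $R$, and the limit is $\epsilon_+$-stable, hence $\epsilon_0$-semistable.

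For $\Theta$-reductivity and $S$-completeness, we will follow the standard strategy for prestable curve-type moduli. Each punctured test stack is the complement of a codimension-two closed point in a regular two-dimensional stack. We proceed as follows.
\begin{enumerate}
\item Extend the family of nodal curves. For nodal curves, one can take the reflexive (pushforward) extension, or use $S$-completeness of the stack of prestable curves combined with the polarization.
\item Extend the line bundle $\cL$ by pushing forward across the missing point, so that $j_*\cL$ is reflexive on a normal surface and locally free away from the nodes.
\item Extend the sections $s_i$ by Hartogs' property.
\end{enumerate}
The result is a family of prestable objects over $\Theta_R$ or $\overline{ST}_R$. It can fail to be prestable or $\epsilon_0$-semistable at the central point in three ways: basepoints may appear at nodes, there may be basepoints of length $>m_0$, or there may be rational tails of degree $<m_0$. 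We will repair these with modifications that are equivariant for the $\bG_m$-action. A basepoint of length $>m_0$ is handled by blowing up and inserting a rational tail. A tail of degree $<m_0$ is handled by contracting it, as in \Cref{S:contractions}, and converting it into a basepoint.

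The key numerical point is that the central fiber over $0$ is forced to carry $\bG_m$-fixed data, because it is the limit from both directions. Using the explicit description of $\overline{ST}_R$ as $[\Spec R[s,t]/(st-\pi)/\bG_m]$, we will compute the special fiber as the associated graded of the two filtrations. The strictly semistable objects, namely degree $m_0$ tails carrying a length $m_0$ basepoint, appear precisely as the fixed objects gluing the two sides. Here the wall condition $\ell(x)\le m_0$ together with $\deg\ge m_0$ on tails is exactly what makes the glued fiber semistable.

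The main obstacle is $S$-completeness. Two $\epsilon_0$-semistable families over $R$ that agree generically may differ at the special fiber in how they resolve a length-$m_0$ basepoint: one may have a tail and the other a basepoint, or the tails may sit in different positions. We must produce a single family over $\overline{ST}_R$ whose fiber at the origin is the common degeneration. Our plan is to reduce to a local computation near each such point. There we model the two families by modifications of a common curve over $R$, obtained using the contraction $\tilde{c}_{\bP^N}$ to $\Qmapminus$ and separatedness of $\Qmapminus$, and then write down the extension explicitly. The origin fiber should be a degree $m_0$ rational tail with a single length-$m_0$ basepoint at a $\bG_m$-fixed point. We would then verify that this extension is unique, which is consistent with the affine diagonal.
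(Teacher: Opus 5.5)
The gap is in existence for properness: your argument extends a different $K$-point than the one given, and your fallback density claim is false. The rest of your outline is sound. The reduction via \cite{Alper_2023_Existence} matches the paper. Your closing plan for $S$-completeness also matches the paper's strategy: pass to the common $\epsilon_-$-model via $\tilde{c}_{\bP^N}$ and separatedness of $\Qmapminus$, then solve a local problem at the contracted points with a spinning tail over the origin. You do not supply its actual content, namely the blow-ups of $\bA^1_x\times\STr$ along $(x,s^at^b)$ with exponents bounded by $\lambda_\cL$.

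\textbf{The main route extends the wrong point.} Replacing the length-$m_0$ basepoints of $\xi\in\bigQmap(K)$ by degree-$m_0$ tails gives a different point $\xi'\in\Qmapplus(K)$. The limit supplied by properness of $\Qmapplus$ restricts to $\xi'$, not to $\xi$, so it does not verify the criterion for $\xi$. The recipe is also ill-defined when $\xi$ already has a spinning tail: replacing the basepoint on that tail by a new tail leaves a degree-$0$ rational bridge.

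\textbf{The fallback density claim is false in general.} Suppose $\Qmapplus$ were dense in $\bigQmap$. Take $(g,n,d)=(1,1,1)$ and $m_0=1$, so $\Qmapplus=\overline{M}_{1,1}(\bP^N,1)$. Every such stable map has nodal domain, since an elliptic curve has no degree-one map onto a line. However, $(E,x_1,\cO_E(q),\{c_i\sigma_q\})$ is $\epsilon_0$-semistable, where $E$ is smooth of genus one, $q\neq x_1$, $\sigma_q$ is the canonical section and $(c_0,\dots,c_N)\neq 0$. Having smooth domain is an open condition, so this point is not in the closure of $\Qmapplus$.

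\textbf{The missing idea is that the modification preserves the image in the good moduli space $M$.} Deformation to the normal cone at the length-$m_0$ basepoints degenerates $\xi$ to a polystable point with spinning tails. Scaling the new tails degenerates $\xi'$ to the same point. Hence $\xi$ and $\xi'$ have the same image in $M$, and $\Qmapplus\to M$ is surjective; the same argument via $\tilde{c}_{\bP^N}$ shows $\Qmapminus\to M$ is surjective. So $M$ is universally closed as the image of a proper stack. This proves properness of $M$ directly, which is equivalent to the stack criterion by \cite{Alper_2023_Existence}.

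The paper instead extends $\xi$ itself, component by component:
\begin{itemize}
\item the $\epsilon_-$-stable part, via properness of $\Qmapminus$;
\item non-spinning tails, via properness of $\Qmapplus$;
\item spinning tails, via \Cref{L:single-tail-proper}, which extends over $\mathcal{M}_{0,2}\cong B\bC^*$ and rescales the sections $\alpha_i y^{m_0}$.
\end{itemize}

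Separately, your generic ``extend, then repair'' scheme for $\Theta$-reductivity is only heuristic. A family of nodal curves over $\Theta_R\setminus 0$ does not in general extend without modification. You should replace it with the same contraction-and-blow-up analysis, now with ideals $(x,\pi^at^b)$, that you sketch for $S$-completeness.
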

\begin{proof}
    The algebraic stack $\bigQmap$ is of finite type (\Cref{T:finite-type}) with affine diagonal (\Cref{T:aff-diag}). By \cite[Theorem A]{Alper_2023_Existence}, an algebraic stack with these properties admits a separated good moduli space if and only if it is $S$-complete and $\Theta$-reductive. These conditions are established in \Cref{T:S-complete} and \Cref{T:Theta-red}, respectively. The existence part of the valuative criterion for properness is verified in \Cref{T:GMS-proper}.
\end{proof}

\subsection{Preliminaries on Filling Conditions}

We recall the necessary and sufficient filling conditions used in \cite[Theorem A]{Alper_2023_Existence} for an algebraic stack to admit a good moduli space. Throughout, we use $R$ to denote a discrete valuation ring with uniformizer $\pi$ and fraction field $K$.

\begin{defn} \cite[Definitions 3.10, 3.38]{Alper_2023_Existence}
For any morphism of algebraic stacks $f : \fX \to \fY$, we consider a commutative diagram of the form
\[
\xymatrix{
    \cY \setminus 0 \ar[r] \ar[d] & \fX \ar[d] \\
    \cY \ar[r] \ar@{-->}[ur] & \fY
}
\]
for a stack $\cY$ and closed point $0 \in \cY$. We define two conditions on $f$ by requiring that there exists a unique filling of the dotted arrow making the diagram commute for different classes of $\cY$:
\begin{enumerate}
    \item \textbf{$\boldsymbol{\Theta}$-reductivity:} all diagrams in which $\cY = \Spec(R[t]) / \bG_m$ where $0$ corresponds to the ideal $(\pi, t)$, and $\bG_m$ acts by giving $t$ weight $-1$. Define $\Theta_R := \Spec(R[t]) / \bG_m$.
    \item \textbf{$\boldsymbol{S}$-completeness:} all diagrams in which $\cY = \Spec(R[s,t]/(st-\pi)) / \bG_m$ where $0$ corresponds to the ideal $(s, t)$, and $\bG_m$ acts by giving $s$ weight $1$ and $t$ weight $-1$. Define $\STr := \Spec(R[s,t]/(st-\pi)) / \bG_m$.
\end{enumerate}

We denote by $Y$ the stack $\cY$ without the quotient by $\bG_m$, and take $Y^\circ := Y \setminus 0$. We let $(1,0)$ denote the point $(s \neq 0, t=0)$ in $\Spec(R[s,t]/(st-\pi))$ and the point $(\pi \neq 0, t=0)$ in $\Spec(R[t])$. Similarly, let $(0, 1)$ denote the point $(t \neq 0, s=0)$ in $\Spec(R[s,t]/(st-\pi))$ and $(t \neq 0, \pi=0)$ in $\Spec(R[t])$.
\end{defn}

\begin{rem} \label{R:unique-filling}\label{R:adjoin-roots}\label{R:fill-without-Gm-action}Since the stack $\bigQmap$ has affine diagonal by \Cref{T:aff-diag}, we first make the following three observations which simplify the analysis of the filling conditions described above.
\begin{enumerate}
    \item  The filling if it exists for $\Theta$-reductivity is automatically unique by \cite[Proposition 3.17]{Alper_2023_Existence}. Similarly, the filling if it exists for S-completeness is automatically unique by \cite[Proposition 3.41]
    {Alper_2023_Existence}.
    \item We can check S-completeness up to adjoining a root of $\pi$ by \cite[Proposition 3.40]{Alper_2023_Existence}. Similarly, we can check $\Theta$-reductivity up to adjoining a root of $\pi$ by \cite[Proposition 3.17]{Alper_2023_Existence} and a root of $t$ by \cite[Proposition 1.3.11]{halpernleistner2022structureinstabilitymodulitheory}.
    \item It suffices to check $S$-completeness and $\Theta$-reductivity by completing over $Y$ instead of $\cal{Y}$. 
    This follows from the fact that given an algebraic stack with affine diagonal $\frak{X} \xrightarrow{} Y$ with sections $f_1, f_2: Y \to \frak{X}$ that are isomorphic over $Y^\circ$, the section over  $Y^\circ$ induced by a 2-isomorphism $f_1|_{Y^\circ} \xrightarrow{\simeq} f_2|_{Y^\circ}$ extends uniquely to a section of $Y$, since $\rm{Isom}_Y(f_1, f_2) \to Y$ is affine. This reduction is used, for example, in \cite[Lemma A.2]{halpernleistner2023structuremoduligaugedmaps}.
\end{enumerate}
\end{rem}

\begin{lem}\label{L:generically-smooth}
     It suffices to verify $S$-completeness and $\Theta$-reductivity for families of quasimaps over $Y^\circ$ whose generic fibre is a smooth curve.
\end{lem}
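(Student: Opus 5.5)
We reduce to the case where the generic fibre is smooth by a limit argument. Since $\bigQmap$ has affine diagonal and is of finite type, \cite[Theorem A]{Alper_2023_Existence} and its proof let us test $S$-completeness and $\Theta$-reductivity on a dense set of generic points. Concretely, the plan is to show that every diagram over $Y^\circ$ can be deformed, after an extension of DVRs, to one whose generic fibre lies in the open dense substack of quasimaps with smooth domain. The relevant domains are $Y = \Spec(R[t])$ or $\Spec(R[s,t]/(st-\pi))$. Here $Y^\circ$ is covered by the two charts $(1,0)$ and $(0,1)$, each an open set isomorphic to a localization of $R$-type schemes. The punctured diagram is determined by a family over the generic point $\Spec K$ together with its extensions to the two special "halves". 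For $\Theta_R$ this is a $K$-point with a $\bG_m$-equivariant degeneration along $t$; for $\STr$ it is two $R$-families agreeing over $K$.

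\textbf{Step 1: reduce to the generic point.} By \Cref{R:fill-without-Gm-action} we may fill over $Y$ rather than $\cY$. By \Cref{R:unique-filling}, fillings are unique whenever they exist. We then use the standard argument from \cite[Section 3]{Alper_2023_Existence}: the filling condition for a given diagram is controlled by the behaviour near the special point. Uniqueness and affineness of the diagonal imply that the locus of diagrams admitting a filling is closed under specialization in a suitable family of test diagrams.

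\textbf{Step 2: smoothability.} Every prestable quasimap in $\bigQmap$ with nodal domain deforms to one with smooth domain. The argument is local: the deformation theory of the nodal curve with the marked points smooths each node independently. The line bundle $L$ and sections $s_i$ extend because the obstructions live in $H^1(C,L)^{\oplus N+1}$, which we handle by a twist or by working in the unobstructed stack of pairs $(C,L)$ and then choosing sections. The $\epsilon_0$-semistability conditions are open, so the generic deformation remains in $\bigQmap$.

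\textbf{Step 3: deform the whole test diagram.} Given a test diagram over $Y^\circ$, we construct a two-parameter family over $\Spec R[[u]]$ (possibly after adjoining roots, allowed by \Cref{R:adjoin-roots}). Its fibre at $u=0$ is the original diagram, and its generic fibre in $u$ has smooth generic domain. We first extend the curve over $Y^\circ$ to a family over $Y^\circ\times\Spec R[[u]]$ by smoothing the nodes of the generic fibre compatibly over both charts. The gluing is automatic for $\Theta_R$. For $\STr$ we glue the two $R$-families along their common $K$-fibre. If a filling exists over $Y\times\Spec K((u))$, it extends over $u=0$ by closedness, using that the relevant Isom and Hom spaces are affine and that codimension-two extension holds on the regular two-dimensional scheme $Y$ via Hartogs for vector bundles/reflexive sheaves. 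The filling problem at $u=0$ thereby reduces to the smooth-generic case.

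\textbf{Main obstacle.} The hardest step is Step 3, in particular producing compatible smoothings over both halves of $Y^\circ$ in the $\STr$ case. The difficulty is that the two $R$-families may have different special fibres, so the smoothing must be chosen on the common generic fibre and extended to each. I would first try to smooth only the generic fibre over $K$, then extend across each special fibre using properness of $\frak{M}^{\rm{pol}}$-type stacks of curves, modified by stable reduction. If that extension fails to preserve $\epsilon_0$-semistability, I would instead use a direct argument. A filling for a family with nodal generic fibre can be built by normalizing the generic nodes, which are sections over $Y^\circ$ after base change. This splits the problem into components with smooth generic fibre, on which we fill and then reglue along the node sections, which extend uniquely over $Y$.
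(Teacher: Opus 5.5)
\textbf{Your main route (Steps 1--3) has genuine gaps.} Step 2 is false in general: quasimaps with smooth domain are not dense in $\bigQmap$ once $g\geq 1$. For example, take $(g,n)=(1,0)$, $d\geq m_0$ and $N\geq 2$, and let the domain be $E\cup T$. Here $E$ is an elliptic curve carrying trivial $L$ and constant sections, and $T$ is a basepoint-free rational tail of degree $d$. These quasimaps are $\epsilon_0$-semistable and form a locus of dimension $(N+1)(d+1)-2$. This exceeds the dimension $(N+1)d$ of the smooth-domain locus, so they cannot be smoothed. Here $H^1(C,L)\neq 0$, and no twist removes that obstruction.

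The ``closedness'' step in Step 3 is circular. Extending a filling from $u\neq 0$ across the remaining point over $u=0$ is again the problem of extending a map into $\bigQmap$ across a locus of codimension at least two. That is exactly the content of $S$-completeness and $\Theta$-reductivity. Affine diagonal gives uniqueness of such extensions, not existence. Hartogs for vector bundles on a regular surface says nothing about maps into a stack of curves, and your base $Y\times\Spec R[[u]]$ is three-dimensional anyway. Likewise, the reduction in Step 1 to diagrams with generic point in a dense open is not something \cite[Theorem A]{Alper_2023_Existence} provides.

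\textbf{Your fallback in the last paragraph is the right argument, and it is the paper's proof.} The paper proceeds as follows. After a finite extension of $R$, each node of the generic fibre is a $K$-point, and its closure in $\cC^\circ/\bG_m$ is a section. One partially normalizes along these sections to get families with an extra marking at each preimage of a node. These families are filled separately and then reglued along the markings. You should promote this to the proof and supply the checks it needs:
\begin{enumerate}
\item[(i)] Each piece, with the node preimages as new markings, is again an $\epsilon_0$-semistable quasimap. The bundle $\omega_C(\sum x_i)\otimes L^{\epsilon_0+\delta}$ restricts on each component to the corresponding bundle of the piece. Since basepoints avoid nodes, the new markings avoid basepoints.
\item[(ii)] A piece may be a degree $m_0$ rational tail with one marking, of type $(0,1,m_0)$. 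This lies outside the standing assumption on $(g,n)$ and must be handled separately; this is Case I, \Cref{P:degenerate-case}.
\item[(iii)] The filled marking sections are disjoint sections in the smooth locus over $Y$, so identifying them yields a family of nodal curves. For a non-separating node this means one piece with two new markings rather than two pieces.
\item[(iv)] Over $Y^\circ$ there is a gluing isomorphism $\sigma_1^*\cL_1\cong\sigma_2^*\cL_2$ compatible with the sections. It and its inverse extend over the closed point, because $Y$ is regular of dimension two and hence normal.
\end{enumerate}
Uniqueness of the glued filling then follows from \Cref{R:unique-filling}.
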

\begin{proof}
    We consider each node separately. After a possible finite extension of the discrete valuation ring, every node can be considered as a $K$-point. We take the closure of this point in the total space of $C^\circ/\bG_m$, where $C^\circ \xrightarrow{} Y^\circ$ is the family of curves, giving a section. The original family of quasimaps is glued from two families of curves, each with an additional marking at the former node. After analyzing the extension problem independently for each such family with additional markings, we may identify components along these markings to obtain a filling for the original family over $Y$.
    
    In particular, we do not need to consider marked points that develop into rational chains over $(1,0)$ and $(0,1)$. There are two cases for the generic fibre over $Y^\circ$: either it is (1) a degree $m_0$ rational tail with one marking, or it is (2) $\epsilon_-$-stable. For (1), rational tails of degree $<m_0$ are not allowed, so the $\bP^1$ cannot degenerate further into irreducible components. For (2), we know that $(0,1)$ and $(1,0)$ must have the same underlying $\epsilon_-$-stable model. This follows since we can always apply the contraction morphism $\tilde{c}_{\bP^n}$ which contracts degree $m_0$ rational tails over $(0,1) $ and $(1,0)$, and since the generic fibre was $\epsilon_-$-stable, the fibres at $(0,1) $ and $(1,0)$ must be the same by separatedness of $\Qmapminus$. That is, if we consider a generically $\epsilon_-$-stable quasimap degenerating to an $\epsilon_0$- semistable quasimap in the fibres over  $(1,0)$ and $(0,1)$, by the explicit description of $\tilde{c}_{\bP^n}$ and separatedness of $\Qmapminus$, we must recover the fibres over $(0,1) $ and $(1,0)$ of the original family of quasimaps over $Y^\circ$, except for the addition of degree $m_0$ rational tails.
\end{proof}

\noindent When working component by component, there are two main cases to consider.

\paragraph{Case I: a Single Rational Tail.}
We first discuss the case where the component is a single rational tail of degree $m_0$ in the generic fibre, where $(g, n, d) = (0, 1, m_0)$. Since rational
tails of degree $< m_0$ are not allowed, the curve cannot further degenerate into irreducible components.

\begin{prop}\label{P:degenerate-case}
    Fix $(g, n, d) = (0, 1, m_0)$. Any family of $\epsilon_0$-semistable quasimaps to $\bP^N$ over $Y^\circ$ whose generic fibre is single rational tail with degree $m_0$ extends uniquely to a family over $Y$.
\end{prop}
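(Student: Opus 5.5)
Since $(g,n,d)=(0,1,m_0)$ and rational tails of degree $<m_0$ are excluded, every $\epsilon_0$-semistable fibre in this situation has irreducible source $\bP^1$ with one marking $x_1$ and $L\cong\cO(m_0)$. The plan is to identify the whole moduli problem with a quotient stack and then fill using properties of that quotient, not by hand. Concretely, I would show that $\QmapO^{\epsilon_0-\ss}_{0,1}(\bP^N,m_0)\cong [V^{\ss}/G]$. Here $G=\Aut(\bP^1,\infty)\times \bG_m$, and $V=H^0(\bP^1,\cO(m_0))^{\oplus N+1}$ is the space of $(N+1)$-tuples of binary forms of degree $m_0$. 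The open locus $V^{\ss}$ consists of tuples that do not vanish identically and whose base locus avoids the marking $\infty$. The length condition $\ell(x)\le m_0$ is automatic because the total degree is $m_0$. Any family over a base $S$ is étale locally of the form $(\bP^1_S,\infty,\cO(m_0))$, since a genus zero curve with a section and a degree $m_0$ line bundle is trivializable locally. This gives the identification. After modding out by the translations and scalings of $\bA^1=\bP^1\setminus\infty$ and the $\bG_m$ scaling of $L$, a tuple is determined up to isomorphism by its divisor-valued data with $\infty$ removed.

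Next I would locate the unique closed point. A semistable tuple either has a basepoint of full length $m_0$, or it does not. If it does, then $s_i=a_i(z-p)^{m_0}$ with $a\neq 0$, and after translation $s_i=a_i z^{m_0}$. This object is stabilized by the scaling $z\mapsto \lambda z$ combined with $\lambda^{m_0}$ on $L$, which is the $\bC^*$ automorphism group of the strictly semistable tail. In the other case, the scaling one-parameter subgroup degenerates the tuple to its leading-term part. Either way the closed orbits form the locus $\{a\in\bC^{N+1}\setminus0\}$ modulo $\bG_m$, that is, a copy of $\bP^N$: a tail with a length-$m_0$ basepoint together with the direction $[a]$. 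I therefore expect the good moduli space to be $\bP^N$, and to be realized as $V^{\ss}\git G'$ for a reductive group $G'$. To obtain this I would first quotient by the unipotent translation subgroup, using the slice of tuples whose base-locus barycenter is at $0$ (a Tschirnhaus-type normalization, valid because $m_0$ is invertible in $\bC$). This leaves $[W^{\ss}/(\bG_m\times\bG_m)]$, where $W$ is a linear slice and the torus acts linearly with weights recording degree in $z$.

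With that description in hand, the filling follows from general results. A quotient $[W^{\ss}/T]$ with $T$ a torus, where $W^{\ss}$ is the GIT-semistable locus for a suitable character, admits a good moduli space. By \cite{Alper_2023_Existence} it is therefore $\Theta$-reductive and $S$-complete. Uniqueness of the filling is \Cref{R:unique-filling}. Existence over $Y$ then follows: $\Theta$-reductivity handles $\cY=\Theta_R$ and $S$-completeness handles $\cY=\STr$. I expect the main obstacle to be the second step, namely matching the stability condition of \Cref{D:e0-stability} (no basepoint at the marking, sections not all zero) exactly with the GIT semistable locus of a character on the slice. This requires a Hilbert–Mumford computation for the two-torus to check that the unstable tuples are exactly those vanishing at $\infty$ or identically zero. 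If the slice approach turns out to be awkward, my fallback is a direct argument: extend the sections by clearing denominators in $\pi$ and $t$ (or $s,t$) on $\bP^1_Y$, using that $Y$ is regular of dimension $2$ so that reflexive extensions across the codimension-two point $0$ are line bundles, and then checking that the fibre over $0$ has no basepoint at the marking.
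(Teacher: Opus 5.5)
Your strategy is correct in outline and genuinely different from the paper's, but the step meant to produce the good moduli space fails as written.

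\textbf{The slice step.} The ``base-locus barycenter'' is undefined on the dense open set of tuples with empty base locus. More seriously, no torus-stable linear slice for the translations exists once $N\ge 1$. Such a slice $W=\bigoplus_k W_k z^k$ must contain $az^{m_0}$ for every $a\neq 0$, because the leading vector $a$ is translation invariant. Transversality to the orbit direction $m_0 a z^{m_0-1}$ at these points then forces $W_{m_0-1}=0$, which has codimension $N+1>1$. So there is no global $[W^{\ss}/(\bG_m\times\bG_m)]$, and the Hilbert--Mumford matching you anticipate never arises.

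\textbf{The repair.} Normalize chartwise instead. Evaluation at the marking gives $\ev\colon\QmapO^{\epsilon_0-\ss}_{0,1}(\bP^N,m_0)\to\bP^N$, $[p]\mapsto[a]$; this is well defined because basepoints avoid the marking. Over $U_j=\{a_j\neq 0\}$, rescale $L$ to make $p_j$ monic, then translate to kill its $z^{m_0-1}$-coefficient (this uses that $m_0$ is invertible). The residual group is the $\bG_m$ of pairs $(z\mapsto\alpha z,\ \alpha^{-m_0}\text{ on }L)$. It fixes the $a_i/a_j$ and acts with weights $k-m_0<0$ on the remaining $z^k$-coefficients. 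Hence $\ev^{-1}(U_j)\cong U_j\times[\bA^{(N+1)m_0-1}/\bG_m]$, whose good moduli space is $U_j$. Since being a good moduli space is local on the target, $\ev$ is a good moduli space onto $\bP^N$. The necessity direction of \cite{Alper_2023_Existence} together with \Cref{R:unique-filling} then gives existence and uniqueness of fillings, as you intend.

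\textbf{Comparison with the paper.} The paper argues by hand. Families that are $\epsilon_+$-stable over all of $Y^\circ$ are handled by separatedness of $\Qmapplus$. Otherwise it treats the length-$m_0$ basepoint as a second marking, so the curve extends via $\mathcal{M}_{0,2}\cong B\bC^*$. The line bundle $\cL$ extends since $\Pic(Y)\cong\Pic(Y^\circ)$, and the sections extend by Hartogs. A check along the marking section then shows the sections do not all vanish there over $0$.

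Your repaired route replaces all of this with one structural fact, and this buys three things.
\begin{enumerate}
\item It is uniform. It covers families whose generic fibre is $\epsilon_+$-stable but which acquire a length-$m_0$ basepoint over one of the two axes. The paper's dichotomy does not explicitly treat this case, since the second-marking trick needs the basepoint over all of $Y^\circ$.
\item It identifies the good moduli space of this component as $\bP^N$, which recovers \Cref{L:single-tail-proper} for free.
\item It works with maps from $\cY\setminus 0$, which is how the proposition is used, so $\bG_m$-equivariance is built in.
\end{enumerate}
The third point matters for your fallback, which is essentially the paper's argument: its final non-vanishing check genuinely needs equivariance. Take the non-equivariant family on $\bP^1_{Y^\circ}$, with $Y=\Spec(R[s,t]/(st-\pi))$, whose sections take the values $(s,t,0,\dots,0)$ at the marking. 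Its unique extension has a basepoint at the marking over $0$. The paper's argument, for its part, is more elementary and runs parallel to the explicit blowup constructions of Case II.
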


\begin{proof}

If the family of quasimaps over $Y^\circ$ contains no length $m_0$ basepoint, it is  $\epsilon_+$-stable and the map $Y^\circ \to \bigQmap$ factors through the open substack $\Qmapplus$. Since $\Qmapplus$ is a separated DM stack, the extension over $Y$ exists and is unique. Thus, we focus on the case where we have strictly semistable objects. We start with an $\epsilon_0$-semistable quasimap $(\cC^\circ, \cL^\circ, \{s_i^\circ\}_{i=0}^N)$ over $Y^\circ$, where the generic fibre is a single rational tail of degree $m_0$ containing a length $m_0$ basepoint.

First, we fill the family of curves $\cC^\circ \xrightarrow{} Y^\circ$ to a family of curves $\cC \xrightarrow{} Y$. Since basepoints of the quasimap are disjoint from nodes, we may regard our rational tail as a $\bP^1$ with two distinguished points: the given marking and a basepoint. Considering the basepoint as a second marking, we have a family of $\bP^1$'s with two marked points. The moduli stack $\mathcal{M}_{0,2}$ of two-pointed rational curves is isomorphic to $B\bC^*$. As $B\bC^*$ is both $\Theta$-reductive and $S$-complete, the family of curves over $Y^\circ$ extends uniquely to $Y$. More specifically, the unique extension of the family of curves is $\cC := \bP_{Y}(\mathcal{N} \oplus \mathcal{O}_{Y})$ for some line bundle $\mathcal{N}$ on $Y$.

Next, we extend the line bundle $\cL^\circ \xrightarrow{} \cC^\circ$ to a line bundle $\cL \xrightarrow{} \cC$. The restriction map $\Pic(\bP_{Y}(\mathcal{N} \oplus \mathcal{O}_{Y})) \xrightarrow{} \Pic(\cC^\circ)$ is an isomorphism since $\Pic(Y^\circ) \cong \Pic(Y)$. Thus any line bundle $\cL^\circ$ extends uniquely to a line bundle $\cL := \iota_*\cL^\circ $ where $\iota: \cC^\circ \hookrightarrow \cC$ denotes the inclusion.

Lastly, we show that the sections $s_0^\circ, \ldots, s_N^\circ$ of $\cL^\circ$ giving the quasimap extend uniquely to $(s_0, \ldots, s_N) \in \Gamma(\cC, \cL)$ in a way that is $\epsilon_0$-semistable. In particular, we need to show that the basepoint is disjoint from the marked point (the node) in the fibre over 0. Consider the marking as corresponding to a section $\sigma: Y \rightarrow \bP_{Y}(\mathcal{N} \oplus \mathcal{O}_{Y})$. The condition that the basepoint is disjoint from the node over $Y^\circ$ means that not all sections $s_i$ vanish along $\sigma(Y^\circ)$. Since the vanishing locus of a section along $\sigma(Y)$ is a divisor, it cannot vanish only at a codimension 2 point. Therefore, if some section $s_i$ is nonvanishing along the image of $\sigma$ over both $(1,0)$ and $(0,1)$, then it remains nonvanishing at $\sigma(0)$. If no single section is nonvanishing along $\sigma$ on both $(1,0)$ and $(0,1)$, we may take a linear combination of sections that are nonvanishing on each of $(1,0)$ and $(0,1)$. Such sections exist because the original family over $Y^\circ$ is a quasimap, so over $(1,0)$ and $(0,1)$, there is at least one section not vanishing at the node. Their sum then does not vanish along $\sigma$ over both $(1,0)$ and $(0,1)$, hence also not at $0$. \end{proof}

\paragraph{Case II: Creating Rational Tails.}\label{general-case-2} We are left to address the main case, when $(g, n, d) \neq (0, 1, m_0)$ and there are no degree $m_0$ rational tails, so the generic fibre is $\epsilon_-$-stable. This will be the focus of the following three subsections of the paper. In this case, extensions over $Y$ are constructed by introducing rational tails of degree $m_0$ where necessary.

\subsection{A Local $\bA^1$ Model for the Filling}

When the generic fibre is $\epsilon_-$-stable, as in Case II above, we show that constructing a filling over $Y$ can be reduced to constructing a filling in formal neighbourhoods around a finite  number of points on the source curve. We begin with the data $(\cC, \cL, \mu)$ consisting of a $\bG_m$-equivariant family of curves $\cC$ over $Y^\circ$ with an $\epsilon_0$-semistable quasimap $\mu: \cC \xrightarrow{} \bP^N$ given by $N+1$ sections of the line bundle $\cL$. By \Cref{R:fill-without-Gm-action}, it is enough to fill both the family of curves $\cC$ and the quasimap $\mu$ over $Y$ as opposed to the quotient stack $\mathcal{Y}$. We begin by providing a summary of the argument to reduce the filling to a local problem, and explain each step in detail afterwards.

\begin{enumerate}
    \item Since $Y^\circ$ admits a morphism to $\Spec(R)$, we utilize the contraction morphism from \Cref{T: e_0-contraction} to first construct an underlying $\epsilon_-$ stable family $(\cC_-, \cL_-, \mu_-)$ of quasimaps over $\Spec(R)$. From \Cref{L:open-dense}, we get a non-empty fiberwise dense open $\cC_-^\circ \subset \cC_-$ agreeing with $\cC$ except for finitely many points $p$ in the special fibre of the family that were contracted. The quasimaps $\mu, \mu_-$ also agree on dense opens as in diagram \ref{eqn:dense-open-commute}.

    \item We extend the family of curves $\cC$ over $Y$ by realizing it as a blow-up of the underlying $\epsilon_-$ stable family $(\cC_-)_Y$ along a closed subscheme $\tilde{Z}$ in \Cref{lem:filling-is-a-blowup}. We basechange along $\Spec(\widehat{\cO_{\cC_-, p}}) \rightarrow \cC_-$ and get a pushout of stacks in \Cref{L: first_pushout}. We transfer the blowups of the family $(\cC_-)_Y$ to blowups on $\Spec(\widehat{\cO_{\cC_-, p}})_{Y}$. By \Cref{lem: affine-blowup}, this is analogous to blowing up $\Spec(\widehat{\cO_{\bA^1_R, 0}})_{Y}$ at an ideal $Z$ whose supported is contained in the union of the two axis' in Y.

    \item Lastly, we extend the map $\mu$ to $\tilde{\mu}: Bl_{\tilde{Z}}((\cC_-)_{Y}) \xrightarrow{} \bP^N$, agreeing with $\mu$ over $Y^\circ$ as in diagram \ref{eqn:maps-commute}. We again basechange along $\Spec(\widehat{\cO_{\cC_-, p}}) \rightarrow \cC_-$ and get a pushout of stacks in \Cref{lem:blowup_completion}, along with an equivalence of categories allowing us to fill the map by providing a map from the formal neighborhood of the points $p$.
\end{enumerate}
\paragraph{Step 1.} We construct the underlying $\epsilon_-$-stable family $\cC_-$ along with a fiberwise dense open $\cC_-^\circ \subset \cC$.

\begin{lem} \label{L:open-dense} Let $(\cC_1,\cL_1, \mu_1)$, and $(\cC_2,\cL_2, \mu_2)$ be two families of $\epsilon_0$-semistable quasimaps over $\Spec(R)$ that are generically $\epsilon_-$-stable and agree over $\Spec(K)$. Let $(\cC_-,\cL_-, \mu_-)$ denote the common image of both $\cC_1$ and $\cC_2$ under the contraction morphism $\tilde{c}_{\bP^N}$ of \Cref{T: e_0-contraction}. There exists a non-empty fiberwise open dense subset $\cC_-^\circ$ such that the restriction $(\tilde{c}_{\bP^N}^{(i)})^{-1}(\cC_-^\circ) \to \cC_-^\circ$ is an isomorphism for $i=1,2$.
\begin{center}
\begin{tikzcd}
                                                                                                    & {[\bC^{N+1}/\bC^*]}                                       &                                                                                                     \\
\cC_1^\circ \subset \cC_1 \arrow[ru, "\mu_1", bend left] \arrow[rd, "\tilde{c}_{\bP^N}^{(1)}"'] \arrow[rdd, bend right] &                                                           & \cC_2^\circ \subset \cC_2 \arrow[lu, "\mu_2"', bend right] \arrow[ld, "\tilde{c}_{\bP^N}^{(2)}"] \arrow[ldd, bend left] \\
                                                                                                    & \cC_-^\circ \subset \cC_- \arrow[uu, "\mu_-"] \arrow[d] &                                                                                                     \\
                                                                                                    & \Spec(R)                                                  &                                                                                                    
\end{tikzcd} 
\end{center}
Moreover, the morphisms $\mu_1|_{\cC_1^\circ}, \mu_2|_{\cC_2^\circ}$ and $\mu_-|_{\cC_-^\circ}$ all agree and the total spaces $\cC_1, \cC_2$ and $\cC_-$ are birational.
\end{lem}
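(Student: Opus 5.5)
We prove \Cref{L:open-dense} from the explicit description of the contraction morphism $\tilde{c}_{\bP^N}$ in \Cref{T: e_0-contraction}.

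\textbf{Common image.} Applying $\tilde{c}_{\bP^N}$ to the two families over $\Spec(R)$ gives two $\epsilon_-$-stable families over $\Spec(R)$. These agree over $\Spec(K)$: the generic fibres are already $\epsilon_-$-stable, so there are no degree $m_0$ rational tails to contract and $\tilde{c}_{\bP^N}$ acts as the identity there. Since $\Qmapminus$ is separated (\Cref{T:e-stableDM}), the two images are isomorphic over $\Spec(R)$, compatibly with the generic identification. This justifies speaking of a single common image $(\cC_-,\cL_-,\mu_-)$.

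\textbf{Exceptional locus.} For $i=1,2$, write $f_i:=\tilde{c}^{(i)}_{\bP^N}\colon \cC_i\to\cC_-$ for the Hassett-type map $\cC_i\to\hat{\cC}$. It contracts exactly the degree $m_0$ rational tails, which occur only in the special fibre, and is an isomorphism away from them. Let $E_i\subset\cC_i$ be the union of these tails. Each tail collapses to its attaching node, so $f_i(E_i)$ is a finite set of closed points $\{p\}$ in the special fibre of $\cC_-$. Set
\[
\cC_-^\circ := \cC_-\setminus \bigl(f_1(E_1)\cup f_2(E_2)\bigr).
\]
This set is open. It is fiberwise dense because it contains the whole generic fibre and misses only finitely many points of the one-dimensional special fibre.

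\textbf{Isomorphism over $\cC_-^\circ$.} We use the Proj description $\cC_-=\Proj_R\bigoplus_d \pi_*(\omega^{m_0}\otimes\cL_i)^{dM}$. Outside $E_i$ the line bundle $\omega^{m_0}\otimes\cL_i$ is relatively ample, so $f_i$ restricts to an isomorphism $\cC_i\setminus E_i\to\cC_-\setminus f_i(E_i)$. The main point to check here is that $f_i^{-1}(f_i(E_i))=E_i$, i.e.\ no point outside $E_i$ maps to a contracted point. This holds because $f_i$ is birational with connected fibres, being a Stein-type contraction. Restricting further to $\cC_i^\circ := f_i^{-1}(\cC_-^\circ)$ gives the claimed isomorphism $\cC_i^\circ\to\cC_-^\circ$.

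\textbf{Agreement of the maps.} On $\cC_i^\circ$ we have $\cO(m_0\cT_{m_0})$ trivial, so $\hat{\cL}$ pulls back to $\cL_i$ and the sections $\hat s_j$ pull back to $s_j$. Hence $\mu_-\circ f_i=\mu_i$ on $\cC_i^\circ$, and so $\mu_1|_{\cC_1^\circ}$, $\mu_2|_{\cC_2^\circ}$ and $\mu_-|_{\cC_-^\circ}$ agree under the identifications. Birationality of the total spaces follows because $\cC_i^\circ$ is dense in $\cC_i$: it contains the generic fibre, and the families are flat with reduced total space. The maps $f_i$ are isomorphisms on these dense opens.

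The main obstacle is the common image step. It requires $\tilde{c}_{\bP^N}$ to be compatible with the generic identification of $\cC_1$ and $\cC_2$, which holds by functoriality (naturality) of the Proj construction under base change.
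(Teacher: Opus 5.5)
Your proposal is correct and follows essentially the same route as the paper. Both arguments take $\cC_-$ as a common target by separatedness of $\Qmapminus$, remove the finitely many images of contracted degree $m_0$ tails from the special fibre, identify the sections on the open via the trivialization of $\cO(m_0\cT_{m_0})$ (the paper's nowhere-vanishing canonical section $\sigma$), and get birationality from agreement over the generic fibre. Your extra check that $f_i^{-1}(f_i(E_i))=E_i$ is left implicit in the paper, and it is most cleanly verified directly on the special fibre, where $f_i$ restricts to the pointwise contraction.
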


\noindent The above diagram does not commute away from $\cC_1^\circ, \cC_2^\circ$ and $\cC_-^\circ$.
\begin{proof}
Let $\tilde{c}_{\bP^N}^{(i)}: \cC_i \to \cC_-$ denote the contraction morphism from \Cref{T: e_0-contraction}. The two families agree over $\Spec(K)$, so by separatedness of $\Qmapminus$, we may take $\cC_-$ as a common target.

Define $\cC_-^\circ \subset \cC_-$ to be the maximal open subset $\cal{U}$ of $\cC_-$ such that both morphisms $(\tilde{c}_{\bP^N}^{(i)})^{-1}(\cal{U}) \to \cal{U}$ are isomorphisms. We have that $\cC_-^\circ$ is non empty and fiberwise dense from the fiberwise description of the contraction morphism in \Cref{T: e_0-contraction}, since we are only removing finitely many points in the central fibre of $\cC_-$ (namely, the images of the nodes where contracted rational tails were attached). The exceptional locus of $\tilde{c}_{\bP^N}^{(i)}$ consists of all rational tails of degree $m_0$ that are contracted; denote this locus by $E^{(i)} \subset \cC_i$. The images $\tilde{c}_{\bP^N}^{(i)}(E^{(i)})$ are finite sets of points in each fibre of $\cC_-$. Define the dense open subsets $\cC_i^\circ \subset \cC_i$ as the preimages of $\cC_-^\circ$ under these contractions.

We show that $\mu_1|_{\cC_1^\circ}$, $\mu_2|_{\cC_2^\circ}$, and $\mu_-|_{\cC_-^\circ}$ all agree under the construction in \Cref{T: e_0-contraction}, which gives a relation between the sections defining $\mu_i$ and $\mu_-$. The contracted quasimap $\mu_-$ on $\cC_-$ is given by a line bundle $\mathcal{L}_-$ and its sections. This line bundle is constructed as 

$$\cL_- \cong (\tilde{c}_{\bP^N}^{(i)})_* (\cL_i \otimes \cO_{\cC_i}(m_0\mathcal{T}_{m_0}))$$

where $\mathcal{T}_{m_0}$ is the divisor of contracted tails having degree $m_0$. The quasimaps $\mu_i$ are given by sections $(s_{i,0}, \dots, s_{i,N})$ of $\cL_i$, while the contracted quasimap $\mu_-$ is given by sections $(t_0, \dots, t_N)$ of $\cL_-$. The construction identifies that over the locus where $\tilde{c}_{\bP^N}$ is an isomorphism, we have:
$$(\tilde{c}_{\bP^N}^{(i)})^*(t_j) = \sigma \cdot s_{i, j}$$
where $\sigma$ is a canonical section of $\cO_{\mathcal{C}_i}(m_0 \mathcal{T}_{m_0})$. On the open set $\cC_i^\circ$, we have removed the support of $\mathcal{T}_{m_0}$, so the section $\sigma$ is nowhere vanishing. Consequently, the map
$$\cL_i|_{\cC_i^\circ} \xrightarrow{\otimes \sigma} (\cL_i \otimes \cO_{\cC_i}(m_0 \mathcal{T}_{m_0}))|_{\cC_i^\circ} \cong ((\tilde{c}_{\bP^N}^{(i)})^*(\cL_-))|_{\cC_i^\circ}$$
is an isomorphism. Under this isomorphism, the sections $s_{i,j}|_{\cC_i^\circ}$ defining $\mu_i|_{\cC_i^\circ}$ correspond to the pullbacks $(\tilde{c}_{\bP^N}^{(i)})^*(t_j)|_{\cC_i^\circ}$ of the sections defining $\mu_-|_{\cC_-^\circ}$. Hence, $\mu_i|_{\cC_i^\circ} = \mu_-|_{\cC_-^\circ} \circ \tilde{c}_{\mathbb{P}^N}^{(i)}|_{\cC_i^\circ}$. Since $\tilde{c}_{\mathbb{P}^N}^{(i)}|_{\cC_i^\circ}$ is an isomorphism onto $\cC_-^\circ$, it follows that $\mu_1|_{\cC_1^\circ}$ and $\mu_2|_{\cC_2^\circ}$ are both identified with $\mu_-|_{\cC_-^\circ}$ via the respective contraction morphisms.

Birationality of $\cC_1, \cC_2$ and $\cC_-$ follows since the generic fibres of $\cC_1$, $\cC_2$, and $\cC_{-}$ are isomorphic since no rational tails of degree $m_0$ occur generically, as the generic fibre is $\epsilon_-$-stable by hypothesis.\end{proof}

The moduli stack $\Qmapminus$ is a separated DM stack, hence any morphism $Y^\circ \xrightarrow{} \Qmapminus$ factors through $\Spec(R)$. This factorization together with the contraction morphism $\tilde{c}_{\bP^N}$ yields an underying $\epsilon_-$-stable family of curves $\cC_-$ over $\Spec(R)$ along with a morphism $\mu_-: \cC_- \xrightarrow{} \bP^N$. By \Cref{L:open-dense}, we obtain fiberwise dense opens $\cC_-^\circ \subset \cC_-$  and $\cC^\circ \subset \cC$ where $\cC^\circ := \tilde{c}_{\bP^N}^{-1}\left((\cC_-)_{Y^\circ}\right)$ such that the following diagram commutes:

\begin{equation}\label{eqn:dense-open-commute}
\centering
\begin{tikzcd}
\cC^\circ \arrow[rr, "\mu"] \arrow[d, "\tilde{c}_{\bP^N}"] &  & {[\bC^{N+1}/\bC^*]} \\
(\cC_-^\circ)_{Y^\circ} \arrow[rru, "\mu_-"']               &  &                    
\end{tikzcd}
\end{equation}

\paragraph{Step 2.} Next we extend the family of curves $\cC$ over $Y$, by realizing it as a blow-up of the underlying $\epsilon_-$-stable family.

\begin{prop}\label{lem:filling-is-a-blowup}
    The family of curves $\cC$ corresponding to an $\epsilon_0$-semistable quasimap over $\Spec(R)$ that is generically $\epsilon_-$-stable is obtained from the underlying $\epsilon_-$-stable family $\cC_-$ by blowing up a closed subscheme.
\end{prop}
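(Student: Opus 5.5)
The approach is to show that the contraction $c := \tilde{c}_{\bP^N}\colon \cC \to (\cC_-)_Y$ of \Cref{T: e_0-contraction} is a projective birational morphism, and to write it explicitly as $\Proj$ of a Rees-type algebra, which is precisely the statement that $\cC$ is a blowup. Here $\cC$ is the family over $Y$ (or over $\Spec(R)$), and $c$ is the fibrewise contraction of the degree $m_0$ rational tails.

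First I will use \Cref{L:open-dense} to see that $c$ is an isomorphism over the fibrewise dense open $\cC_-^\circ$. The complement of $\cC_-^\circ$ is a finite set of points $p$ in the special fibre(s), namely the images of the nodes where the degree $m_0$ tails were attached. Hence $c$ is proper and birational, with exceptional locus the union $E$ of the contracted tails. I will then consider the divisor $E$, or more precisely the divisor $m_0\mathcal{T}_{m_0}$ appearing in the construction of $\cL_-$ in the proof of \Cref{L:open-dense}. Set $\mathcal{M} := \cO_{\cC}(-E)$. By the description of the contraction, $\omega_{\cC}(\sum x_i)^{m_0}\otimes \cL^{1+\delta'}$ is relatively ample on $\cC$ and trivial-degree only along the tails. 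So $\cO_{\cC}(-kE)$, twisted by a pullback of an ample bundle from $\cC_-$, is $c$-ample, since $E\cdot T<0$ on each tail $T$ (a tail meets the rest of the curve at one point, so $E|_T$ has negative self-intersection in the total space). I will normalize $E$ to be Cartier; after adjoining roots of $\pi$ as permitted by \Cref{R:adjoin-roots}, the total space near the attaching nodes becomes an $A_k$-singularity, and I will replace $E$ by a suitable multiple.

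Next I will define the ideal sheaf $\mathcal{I}_k := c_*\cO_{\cC}(-kE)\subset \cO_{\cC_-}$, whose cosupport is the finite set of points $p$, and set $\tilde Z := V(\mathcal{I}_k)$. Since $\cO(-E)$ is $c$-ample, for $k\gg 0$ the map $c^*c_*\cO(-kE)\to \cO(-kE)$ is surjective. The standard argument (Hartshorne II.7.17 style: a projective birational morphism is the blowup of $c_*$ of a relatively very ample ideal twist) then gives
\[\cC \cong \Proj_{\cC_-}\Bigl(\bigoplus_{j\ge 0} c_*\cO_{\cC}(-jkE)\Bigr).\]
To identify this with $\Bl_{\tilde Z}\cC_-$ I need $c_*\cO(-jkE)=\mathcal{I}_k^j$ up to integral closure or for $k$ large. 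I will verify this using the vanishing $R^1c_*\cO(-jkE)=0$, which holds because the fibres of $c$ are $\bP^1$'s and the relevant degrees are positive. This implies that the algebra is generated in degree one after passing to a Veronese subalgebra, and Veronese subalgebras do not change $\Proj$.

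I expect the main obstacle to be the Cartier and normality issues on the total space. The total space $\cC$ over the two-dimensional base $Y$ is singular at the nodes, and $E$ need not be $\bQ$-Cartier a priori. The factorization of the Rees algebra may also require $\cC$ to be normal, or at least $S_2$ along $E$. My first approach would be to work formally locally at each $p$, where $\cC_-$ looks like $\Spec(\widehat{\cO_{\bA^1_R,0}})_Y$ in anticipation of \Cref{lem: affine-blowup}, and to check there that $c_*\cO(-jkE)$ is an explicit monomial ideal supported on the union of the two axes of $Y$. I would then glue using the fact that $c$ is an isomorphism away from the points $p$.
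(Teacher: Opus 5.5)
Your argument is correct in substance, but it is a hands-on reproof of the general fact the paper simply cites. The paper's proof has two steps. It observes that $\tilde{c}_{\bP^N}$ is projective (it is built from a relative $\Proj$) and birational between integral total spaces (the generic fibre is smooth). It then invokes Liu's Theorem 8.1.24: a projective birational morphism of integral schemes over a Noetherian affine base is a blowup.

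You instead run that theorem's proof with a specific relatively ample bundle coming from the contracted tails. Your steps are:
\begin{enumerate}
\item $kE$ is Cartier once $k$ is divisible by the indices of the $A_{n-1}$ singularities at the attaching nodes.
\item $T^2=-T\cdot C'<0$, where $C'$ is the rest of the special fibre, so $\cO_{\cC}(-kE)$ is ample on the fibres of $c$, hence $c$-ample.
\item Therefore $\cC\cong\Proj_{\cC_-}\bigoplus_{j\ge 0}c_*\cO_{\cC}(-jkE)$.
\item A Veronese subalgebra of this finitely generated algebra is the Rees algebra of $\mathcal{I}=c_*\cO_{\cC}(-dkE)$.
\end{enumerate}
The citation is shorter and needs no intersection-theoretic input. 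Your route instead buys an explicit center $\tilde Z=V(\mathcal{I})$, cosupported exactly at the finitely many points $p\notin\cC_-^\circ$. That support condition is what \Cref{L: first_pushout} and \Cref{lem:blowup_completion} use afterwards, and the bare citation does not by itself control where the center lives.

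A few things to tighten:
\begin{enumerate}
\item The input that makes $c_*\cO_{\cC}(-kE)$ an ideal of $\cO_{\cC_-}$ is $c_*\cO_{\cC}=\cO_{\cC_-}$. This follows from properness and birationality of $c$ together with normality of the target $\cC_-$, which is flat over $R$ with reduced nodal special fibre and smooth generic fibre, hence $S_2$ and $R_1$. Normality of $\cC$, which you flag as the main obstacle, is never needed.
\item Degree-one generation of a Veronese subalgebra comes from finite generation of the section algebra of a $c$-ample bundle, not from $R^1c_*\cO_{\cC}(-jkE)=0$. Equality with $\mathcal{I}_k^j$ only up to integral closure would at best identify $\cC$ with the normalized blowup, so it is the Veronese step that gives an honest blowup.
\item Adjoining roots of $\pi$ makes the nodes of the total space more singular rather than helping with Cartier-ness; taking multiples of $E$ is all you need.
\item Work over $\Spec(R)$ as in the statement. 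At this stage $\cC$ only exists over $Y^\circ$, not over $Y$.
\end{enumerate}
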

    \begin{proof}
        The contraction morphism $\tilde{c}_{\bP^N}$ is birational on the total spaces of the families of curves since the $\epsilon_0$-semistable family agrees with the underlying $\epsilon_-$-stable family over the generic fibre by construction. The morphism is induced by the relative Proj of the graded sheaf
        $$\bigoplus_{d \ge 0} \pi_* (\omega_{\cC}(\sum_i x_i) \otimes \mathcal{L}^{\epsilon_0})^{dM} $$
        for sufficiently large $M$, as in the construction of the contraction. Thus $\tilde{c}_{\bP^N}$ is a projective morphism between integral schemes, as we are working in the case of a smooth generic fibre. By \cite[Theorem 8.1.24]{liu2002algebraic}, a projective birational morphism of integral schemes over a Noetherian affine scheme is a blowing-up morphism along a closed subscheme.
    \end{proof}  

The filling can be realized as a blowup of the underlying $\epsilon_-$-stable family by the above lemma, so our goal is to exhibit a ideal corresponding to closed subscheme $\tilde{Z} \hookrightarrow (\cC_-)_{Y}$ such that
    $$\Bl_{\tilde{Z}}((\cC_-)_{Y})|_{Y^
    \circ} \cong \cC$$
where $\cC$ is the family of curves we started with over $Y^\circ$. In order to produce such an ideal corresponding to $\tilde{Z}$, we can work locally using the following lemma.

\begin{lem}\label{L: first_pushout}
 The following square is a pushout in the category of quasi-compact algebraic stacks with affine diagonal

\begin{center}
\begin{tikzcd}
{(\Spec(\widehat{\cO_{\cC_-, p}})\backslash p)_{Y}} \arrow[d, hook] \arrow[rr] &  & (\cC_-\backslash p)_{Y} \arrow[d, hook] \\
{(\Spec(\widehat{\cO_{\cC_-, p}}))_{Y}} \arrow[rr, "f"]                                                                    &  & (\cC_-)_{Y}                     

\end{tikzcd}   
\end{center}
where $p$ is a point in the special fibre of $\cC_-$ over $\Spec(R)$ that is not contained in $\cC_-^\circ$.
\end{lem}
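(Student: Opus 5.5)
This square is an instance of Beauville--Laszlo / formal gluing: a Noetherian scheme is recovered from the complement of a closed subset together with the formal completion along it. The plan is to invoke the stack-theoretic form of this statement due to Bhatt (\emph{Algebraization and Tannaka duality}) and Halpern-Leistner--Preygel (\emph{Mapping stacks and categorical notions of properness}). For a Noetherian algebraic space $X$, a closed subset $Z \subset X$, and $\hat X$ the completion along $Z$ (or $\Spec$ of the completed local ring when $Z$ is a closed point), the square formed by $\hat X \setminus Z$, $X \setminus Z$, $\hat X$ and $X$ is a pushout in the category of algebraic stacks with quasi-affine (in particular affine) diagonal. Concretely, for any such target $\fX$, restriction gives an equivalence
\[
\Map(X,\fX) \simeq \Map(X\setminus Z,\fX)\times_{\Map(\hat X\setminus Z,\fX)}\Map(\hat X,\fX).
\]

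The steps are as follows.

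First, set up the geometry. $\cC_-$ is a flat proper family of nodal curves over $\Spec(R)$, so it is a Noetherian scheme of finite type over $R$, and $p$ is a closed point of its special fibre. The base change $(\cC_-)_Y = \cC_- \times_{\Spec R} Y$ is again Noetherian, since $Y$ is $\Spec R[t]$ or $\Spec R[s,t]/(st-\pi)$, both of finite type over $R$. The closed subset to excise is $p_Y := p \times_{\Spec R} Y$.

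Second, identify the completion of $(\cC_-)_Y$ along $p_Y$ with $(\Spec(\widehat{\cO_{\cC_-,p}}))_Y$. This is the step I expect to be the main obstacle. Base change along $Y \to \Spec R$ does not commute with completion on the nose: $\widehat{\cO_{\cC_-,p}}\otimes_R R[t]$ is not adically complete in the $t$-direction. It is, however, flat over $\cO_{\cC_-,p}\otimes_R R[t]$, and the two rings have the same quotients modulo all powers of the ideal of $p_Y$.

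For this reason I would use the version of formal gluing that only requires a flat morphism $f\colon X' \to X$ which is an isomorphism on all infinitesimal neighbourhoods of $Z$ (equivalently $X'\times_X Z \cong Z$, with $f$ flat). This is the form appearing in Bhatt's paper and in Halpern-Leistner--Preygel, and by fpqc descent it covers any such "Beauville--Laszlo pair."

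The hypotheses hold in our situation. $\Spec(\widehat{\cO_{\cC_-,p}}) \to \cC_-$ is flat, being completion of a Noetherian local ring composed with localization, and it induces an isomorphism on $p$ and all of its thickenings. Both properties are preserved by the base change to $Y$. Hence the square, with the complement of $p_Y$ in $(\Spec\widehat{\cO_{\cC_-,p}})_Y$ as top-left corner, satisfies the hypotheses. This corner is exactly the fibre product $(\Spec(\widehat{\cO_{\cC_-,p}})\setminus p)_Y$, so the square is also cartesian.

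Third, conclude. By the cited theorem, maps out of $(\cC_-)_Y$ to any quasi-compact algebraic stack with affine diagonal are equivalent to compatible pairs of maps from the other two corners. This is precisely the pushout property in that category.

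If several points $p$ lie outside $\cC_-^\circ$, they form a finite set, and I would either apply the same argument to the finite union or iterate over the points one at a time.
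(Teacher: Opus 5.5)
Your proposal is correct and uses the same tool as the paper: the formal gluing (Beauville--Laszlo) result of Halpern-Leistner--Preygel for a flat map that is an isomorphism on all infinitesimal thickenings of $p$, applied to the completion map at $p$. The only difference is the order of operations: the paper checks the gluing hypotheses for the square over $\Spec(R)$ and then asserts that its base change to $Y$ is still a pushout, whereas you note that flatness of $\Spec(\widehat{\cO_{\cC_-,p}})\to\cC_-$ and the isomorphism on thickenings survive base change, and you apply the criterion directly over $Y$. Your route is the cleaner justification, since pushouts of stacks are not automatically preserved by base change, and $(\Spec(\widehat{\cO_{\cC_-,p}}))_Y$ is not literally the completion of $(\cC_-)_Y$ along $p_Y$.
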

\begin{proof}
    This is the basechange of a pushout square over $\Spec(R)$. We show that it is a pushout over $\Spec(R)$ by applying \cite[Lemma 3.3.3, Corollary 3.3.7]{halpernleistner2019mappingstackscategoricalnotions}. In the notation of \cite[Lemma 3.3.3]{halpernleistner2019mappingstackscategoricalnotions}, $\cal{X} = \cC_-$, $Z = \{p\}$ and $\mathcal{Y} = \mathcal{U} = \cC_- \setminus p$. To show that the conditions of Lemma 3.3.3 in \cite{halpernleistner2019mappingstackscategoricalnotions} are satisfied, we need to show that $\Spec(\widehat{\cO_{\cC_-, p}})$ and $\cC_-$ have the same formal completion along $f^{-1}(p)$ and $p$ respectively. The preimage under the morphism $f$ of $p$ is the maximal ideal $\widehat{m}_p$ in the completed local ring $\widehat{\cO_{\cC_-, p}}$ corresponding to the maximal ideal $m_p \in \cO_{\cC_-, p}$. The formal completion of $\cC_-$ along $p$ is 
    $$\Spf(\lim_{\leftarrow n} \cO_{\cC_-, p}/m_p^n ) \cong \Spf(\widehat{\cO_{\cC_-, p}})$$
    which is the same as the formal completion of $\Spec(\widehat{\cO_{\cC_-, p}})$ along $\widehat{m}_p$, since $\Spec(\widehat{\cO_{\cC_-, p}})$ is already completed with respect to $\widehat{m}_p$. The map $f^{-1}(p) \to p$ is a morphism of schemes 
    $i: \Spec(\kappa(\widehat{\mathfrak{m}}_p)) \to \Spec(\kappa(p))$. 
    Since completion of a local ring does not change the residue field, we have 
    $\kappa(\widehat{\mathfrak{m}}_p) \cong \kappa(p)$ canonically. 
    Therefore $i$ is an isomorphism, and in particular flat, satisfying the flatness hypothesis 
    of Corollary 3.3.7 of \cite{halpernleistner2019mappingstackscategoricalnotions}.
    
\end{proof}

The formally étale morphism $f:(\Spec(\widehat{\cO_{\cC_-, p}}))_{Y} \rightarrow(\cC_-)_{Y}$ above is isomorphic along a closed subset, so we get a bijection of ideals that are set theoretically supported on the union of the two axis' in $Y$ over the point $p$. Thus, in order to extend the family of curves $\cC$ over $Y$ via blowup, we can work locally and blowup at an ideal in $(\Spec(\widehat{\cO_{\cC_-, p}}))_{Y}$.

\begin{lem} \label{lem: affine-blowup} The following is an isomorphism over $Y$
    $$\Spec(\widehat{\cO_{\cC_-, p}})_{Y} \cong \Spec(\widehat{\cO_{\bA^1_R, 0}})_{Y}$$
    where $p$ is a point in the special fibre of $\cC_- \xrightarrow{} \Spec(R)$ that is not contained in the dense open $\cC_-^\circ$.
\end{lem}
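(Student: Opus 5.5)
The key point is that $p$ is a smooth point of the relative curve $\cC_- \to \Spec(R)$. Points of the special fibre outside $\cC_-^\circ$ are, by \Cref{L:open-dense} and the fibrewise description of $\tilde{c}_{\bP^N}$ in \Cref{T: e_0-contraction}, exactly the images of the attaching points of contracted degree $m_0$ rational tails. The contraction replaces each such tail by a length $m_0$ basepoint at that point. Basepoints of a prestable quasimap are disjoint from nodes and markings by \Cref{D:prestableQmap}, and $\mu_-$ is $\epsilon_-$-stable. Hence $p$ lies in the smooth, unmarked locus of the fibre $(\cC_-)_0$, so $\cC_- \to \Spec(R)$ is smooth of relative dimension one at $p$. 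After possibly passing to a finite extension of $R$, which is allowed by \Cref{R:adjoin-roots}, we may also assume $\kappa(p)$ equals the residue field $k$ of $R$. Since $R$ is a complete-able DVR over $\bC$, $k=\bC$ here in the relevant cases, or we simply work over the residue field after the extension.

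Smoothness gives an étale neighbourhood of $p$ mapping to $\bA^1_R$ with $p \mapsto 0$. Concretely, choose $u \in \cO_{\cC_-,p}$ restricting to a uniformizer of the smooth curve $(\cC_-)_0$ at $p$. The induced map $R[x] \to \cO_{\cC_-,p}$, $x \mapsto u$, is étale at $p$, and it sends the point $(\pi,x)$ to $p$ with trivial residue field extension. An étale local homomorphism of Noetherian local rings with trivial residue extension induces an isomorphism on completions. Therefore
\[\widehat{\cO_{\bA^1_R,0}} \cong \widehat{\cO_{\cC_-,p}},\]
and both are isomorphic to $\widehat{R}[[x]]$. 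Alternatively, one can argue directly that $\widehat{\cO_{\cC_-,p}}$ is a complete regular local ring of dimension two, formally smooth over $R$, with regular system of parameters $(\pi,u)$. The Cohen structure theorem relative to $R$ then gives the same isomorphism. This isomorphism is $R$-linear by construction.

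Base change along $Y \to \Spec(R)$ then gives $\Spec(\widehat{\cO_{\cC_-,p}})_{Y} \cong \Spec(\widehat{\cO_{\bA^1_R,0}})_{Y}$ as schemes over $Y$. If $Y$ is only regarded over $R$ via $\pi = st$ (or via $R \to R[t]$), fibre product over $\Spec(R)$ preserves isomorphisms over $\Spec(R)$, so nothing more is needed. The main obstacle is the first step: rigorously justifying that $p$ is a smooth point of the total family, and not merely of the special fibre, and handling the residue field. Flatness of $\cC_-$ over $R$ together with smoothness of the special fibre at $p$ gives smoothness at $p$ by the fibrewise criterion. The residue-field issue is handled by the root-adjoining and finite-extension reductions already in place.
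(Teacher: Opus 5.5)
Your proposal is correct and follows the same route as the paper. The paper asserts that $\cC_-$ is smooth over $R$ at $p$ "by the construction of the contraction morphism" and concludes $\widehat{\cO_{\cC_-,p}} \cong R[[x]]$. You supply the details it omits: the attaching point becomes a basepoint, so it lies in the smooth unmarked locus; the fibrewise smoothness criterion; the étale coordinate inducing an isomorphism of completions; and the residue-field/completeness bookkeeping that gives $\widehat{R}[[x]]$.
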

\begin{proof} By the construction of the contraction morphism in \Cref{T: e_0-contraction}, $\cC_{-}$ is smooth at $p$. Thus, $\widehat{\cO_{\cC_-, p}} \cong R[[x]]$ where $x$ is the local coordinate around the point $p$.
\end{proof}

\paragraph{Step 3.} Once we have constructed the family of curves via blowup, we must then construct an extension of the quasimap $\tilde{\mu}: \Bl_{\tilde{Z}}((\cC_-)_{Y}) \xrightarrow{} [\bC^{N+1}/\bC^*]$ such that the following diagram commutes:
\begin{equation}\label{eqn:maps-commute}
\centering
\begin{tikzcd}
\left(\Bl_{\tilde{Z}}((\cC_-)_{Y})\right)|_{Y^\circ} \arrow[d, "\cong"] \arrow[rr, "\tilde{\mu}"] &  & {[\bC^{N+1}/\bC^*]} \\
\cC \arrow[rru, "\mu"]                                                       &  &                    
\end{tikzcd}
\end{equation}
Similarly to extending the curve, we can extend the quasimap locally by basechanging along the morphism $\Spec(\widehat{\cO_{\cC_-, p}}) \xrightarrow{} \cC_-$.
\begin{lem}\label{lem:blowup_completion}
    The following square is a pushout in the category of quasi-compact algebraic stacks with affine diagonal
    \[
    \begin{tikzcd}
    (\Spec(\widehat{\cO_{\cC_-, p}}) \setminus p)_{Y} \arrow[d, hook] \arrow[r] & (\cC_- \setminus p)_{Y} \arrow[d, hook] \\
    \Bl_Z((\Spec(\widehat{\cO_{\cC_-, p}}))_{Y}) \arrow[r, "g"] & \Bl_{\tilde{Z}}((\cC_-)_{Y})
    \end{tikzcd}
    \]
    where $p$ is a point in the special fibre of $\cC_- \xrightarrow{} \Spec(R)$ that is not contained in $\cC_-^\circ$, and $\tilde{Z} \subset (\cC_-)_{Y}$ is a closed subscheme set-theoretically contained in the preimage of $p$, with  $Z \subset (\Spec(\widehat{\cO_{\cC_-, p}}))_{Y}$ its preimage under the
    map $f$ of \Cref{L: first_pushout}. Moreover,
    $$\Qcoh(\Bl_{\tilde{Z}}((\cC_-)_{Y})) \rightarrow \Qcoh(\Bl_Z((\Spec(\widehat{\cO_{\cC_-, p}}))_{Y}))\times_{\Qcoh((\Spec(\widehat{\mathcal{O}_{\cC_-,p}}\setminus p))_{Y})} \Qcoh((\cC_-\backslash p)_{Y})$$
    is an equivalence of categories.
\end{lem}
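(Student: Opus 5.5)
The plan is to deduce the statement from \Cref{L: first_pushout} by base change along the blow-up, and then apply \cite[Lemma 3.3.3, Corollary 3.3.7]{halpernleistner2019mappingstackscategoricalnotions} a second time, now to the blown-up stack.

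\textbf{Step 1: the blow-up base changes.} The morphism $f:(\Spec(\widehat{\cO_{\cC_-, p}}))_{Y} \to (\cC_-)_{Y}$ is the base change to $Y$ of $\Spec(\widehat{\cO_{\cC_-, p}}) \to \cC_-$. Since $\cC_-$ is locally Noetherian, this map is flat, so $f$ is flat. Blow-ups commute with flat base change. Taking $Z = f^{-1}(\tilde{Z})$ therefore gives a cartesian square
\[
\Bl_Z((\Spec(\widehat{\cO_{\cC_-, p}}))_{Y}) \cong \Bl_{\tilde{Z}}((\cC_-)_{Y}) \times_{(\cC_-)_{Y}} (\Spec(\widehat{\cO_{\cC_-, p}}))_{Y},
\]
and $g$ is the induced projection.

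\textbf{Step 2: the open pieces are unchanged.} The subscheme $\tilde{Z}$ is set-theoretically supported in the preimage of $p$, that is, in $\{p\}\times Y$. So the blow-up is an isomorphism over $(\cC_- \setminus p)_Y$. Likewise, $\Bl_Z$ is an isomorphism over $(\Spec(\widehat{\cO_{\cC_-, p}})\setminus p)_Y$. This identifies the top row and the two vertical open immersions of the square. Their complements are $E := \pi^{-1}(\{p\}_Y)$ in $\Bl_{\tilde{Z}}$ and $g^{-1}(E)$ in $\Bl_Z$.

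\textbf{Step 3: check the hypotheses of Lemma 3.3.3.} In the notation there, take $\mathcal{X} = \Bl_{\tilde{Z}}((\cC_-)_Y)$, take the closed substack to be $E$, and take $\mathcal{U} = (\cC_-\setminus p)_Y$.
\begin{itemize}
\item \emph{Formal completions agree.} By \Cref{L: first_pushout}, $f$ induces an isomorphism of formal completions along $\{p\}_Y$, since $\widehat{\cO_{\cC_-,p}}$ is already $\mathfrak{m}_p$-adically complete. The blow-up is proper over the base, so its formal completion along $E$ is the base change of the formal completion of $(\cC_-)_Y$ along $\{p\}_Y$. This uses formal functions, or equivalently that completion commutes with the flat base change of Step 1. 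Hence $g$ induces an isomorphism $\widehat{\Bl_Z}_{g^{-1}(E)} \cong \widehat{\Bl_{\tilde{Z}}}_{E}$.
\item \emph{Flatness.} The map $g^{-1}(E) \to E$ is an isomorphism, because $f^{-1}(\{p\}_Y) \to \{p\}_Y$ is an isomorphism, as shown in \Cref{L: first_pushout}. In particular it is flat, which is the hypothesis of Corollary 3.3.7.
\item \emph{Finiteness.} All stacks involved are quasi-compact with affine diagonal.
\end{itemize}
Lemma 3.3.3 then gives the pushout square, and Corollary 3.3.7 gives the equivalence of $\Qcoh$ as a Beauville--Laszlo type gluing statement.

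\textbf{Expected obstacle.} The main difficulty is the completion comparison in the first bullet of Step 3. The statement concerns the completion along the positive-dimensional exceptional locus $E$, which sits over a codimension-$2$ locus of $(\cC_-)_Y$. It is not the completion at a point. I would first reduce to the affine case: the blow-up is $\Proj$ of the Rees algebra $\bigoplus_n I^n$, and its completion along $E$ is computed by completing the Rees algebra with respect to $\mathfrak{m}_p$. Since $I$ contains a power of $\mathfrak{m}_p R[s,t]$, this completion depends only on the $\mathfrak{m}_p$-adic completion of the base ring. That base ring is exactly what $f$ preserves.
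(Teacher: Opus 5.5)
Your proposal is correct and follows essentially the same route as the paper: flat base change of blow-ups (Stacks Tag 0805) makes the square over $f$ Cartesian, so the formal-completion isomorphism for $f$ transfers to $g$, and Lemma 3.3.3 and Corollary 3.3.7 of Halpern-Leistner--Preygel then give the pushout and the $\Qcoh$ equivalence. You identify the relevant closed loci as $E$ and $g^{-1}(E)$, which is more precise than the paper's wording. The completion comparison you flag as the main obstacle is in fact purely formal and needs neither properness nor the theorem on formal functions: since $E$ is the full preimage of $\{p\}_Y$, its completion is the colimit of the base changes of the thickenings of $\{p\}_Y$, and $f$ is an isomorphism on each of those thickenings.
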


\begin{proof} As in the proof of \Cref{L: first_pushout}, we apply \cite[Corollary 3.3.7]{halpernleistner2019mappingstackscategoricalnotions}. We need to show that $g$ induces an isomorphism on formal completions. Consider the commutative diagram:
    
    \[
    \begin{tikzcd}
    (\Spec(\widehat{\cO_{\cC_-, p}}) \setminus p)_{Y} \arrow[d, hook] \arrow[r] & 
    (\cC_- \setminus p)_{Y} \arrow[d, hook] \\
    \Bl_Z((\Spec(\widehat{\cO_{\cC_-, p}}))_{Y}) \arrow[d, "\pi_Z"] \arrow[r, "g"] & 
    \Bl_{\tilde{Z}}((\cC_-)_{Y}) \arrow[d, "\pi_{\tilde{Z}}"] \\
    (\Spec(\widehat{\cO_{\cC_-, p}}))_{Y} \arrow[r, "f"] & 
    (\cC_-)_{Y}
    \end{tikzcd}
    \]

    The morphism $f$ is flat because it is a base change of the completion map $\Spec(\widehat{\cO_{\cC-,p}} )\to \cC_-$, which is flat. By construction $\tilde{Z}$ is the pullback of $Z$ under $f$. Hence, by the flat base change property of blow‑ups \cite[Tag 0805]{stacks-project}, the lower square is Cartesian.
    Therefore, $f$ inducing an isomorphism on formal completions along $p$, implies that so does $g$ along the preimage of $p$, namely $Z$ and $\tilde{Z}$. Thus the conditions of \cite[Lemma 3.3.3]{halpernleistner2019mappingstackscategoricalnotions} are satisified, giving the pushout above. The resulting equivalence of categories induced by restriction follows directly from \cite[Lemma 3.3.3]{halpernleistner2019mappingstackscategoricalnotions}. 
\end{proof}

The above lemma says that maps into $[\bC^{N+1}/\bC^*]$
can also be reduced to a local extension problem. A quasimap from $\Bl_{\tilde{Z}}((\cC_-)_{Y})$ to \([\mathbb{C}^{N+1}/\mathbb{C}^*]\) is given by the data of a morphism $\cO^{N+1} \xrightarrow{} \cL$ on both $\Bl_Z((\Spec(\widehat{\cO_{\cC_-, p}}))_{Y})$ and $(\cC_-\backslash p)_Y$, together with an isomorphism on $(\Spec(\widehat{\cO_{\cC_-, p}}) \setminus p)_{Y}$ identifying one family of sections with the other. This gives a morphism of line bundles $\cO^{N+1} \xrightarrow{} \cL$ on $\Bl_{\tilde{Z}}((\cC_-)_{Y})$.

\paragraph{Next Steps.} We are left to find an ideal corresponding to a closed subscheme $Z$ whose support is contained in the union of the two axis' in $Y$ such that our original family of curves $\cC$ is recovered via a blowup, and our original map $\mu$ extends to a $\epsilon_0$-semistable quasimap $\Bl_Z((\Spec(\widehat{\cO_{\bA^1_R, 0}}))_Y) \xrightarrow{} \bP^N$. In subsequent sections, by analyzing all configurations of $\epsilon_0$-semistable quasimaps over $Y^\circ$, we will produce such ideals $Z$.

\subsection{\texorpdfstring{$S$}{S}-Completeness}\label{sec:S-complete}

In this section, we establish $S$-completeness for $\bigQmap$ by analyzing the local filling problem described in the above subsection.

\begin{thm} \label{T:S-complete} The algebraic stack $\bigQmap$ is S-complete.
\end{thm}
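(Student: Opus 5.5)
The plan is to follow the reduction set up in the previous subsection. By \Cref{R:unique-filling} the filling, if it exists, is unique. It suffices to produce a filling over $Y=\Spec(R[s,t]/(st-\pi))$ rather than over $\STr$, and we may freely adjoin roots of $\pi$. By \Cref{L:generically-smooth} we may work component by component with smooth generic fibre. The case of a single degree $m_0$ rational tail is \Cref{P:degenerate-case}, so it remains to treat Case II, where the generic fibre is $\epsilon_-$-stable.

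Note that $Y^\circ$ is covered by $\{s\neq 0\}\cong\Spec R[s^{\pm1}]$ and $\{t\neq0\}\cong \Spec R[t^{\pm1}]$. The given family therefore amounts to two $\epsilon_0$-semistable families over $R$ (after restricting to $s=1$, resp.\ $t=1$), agreeing over $K$ and twisted by the $\bG_m$-direction. Applying \Cref{L:open-dense} to these two families gives the common contraction $\cC_-$ over $\Spec R$ together with finitely many special points $p$ in its central fibre. By \Cref{lem:filling-is-a-blowup}, \Cref{L: first_pushout}, \Cref{lem:blowup_completion} and \Cref{lem: affine-blowup}, it suffices to do the following for each such $p$:
\begin{itemize}
\item work on $\Spec(R[[x]])_Y=\Spec(R[[x]][s,t]/(st-\pi))$;
\item find an ideal $Z$, supported over the origin of $Y$ and the point $x=0$, whose blowup restricts over $Y^\circ$ to the given family of curves;
\item extend the local sections $s_0,\dots,s_N$ over $\Bl_Z$ so that the fibre over $0\in Y$ is $\epsilon_0$-semistable.
\end{itemize}
These local fillings then glue, via the pushout squares and the equivalence of $\Qcoh$ in \Cref{lem:blowup_completion}, to a global $\epsilon_0$-semistable quasimap over $Y$.

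The core step is a case analysis of the local picture at $p$. Over each of the two axes, the special fibre near $p$ is one of two things: a degree $m_0$ rational tail attached at $p$, or a length $m_0$ basepoint at $p$. There is also the trivial case, with neither, which is handled by separatedness of $\Qmapminus$. After adjoining a root of $\pi$ and translating $x$, we can arrange that near $p$ the common contracted sections are $x^{m_0}\cdot u$ modulo terms of higher $\pi$-adic order, with $u$ a basepoint-free tuple. The valuations governing when the basepoint collides with $p$ on each side then become integers $a,b\ge 0$. I would take $Z=(x,s^{a}t^{b}\cdot(\text{monomial}))$, in the simplest case $Z=(x,t^{a})$. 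On $\{t\ne0\}$ this ideal is the unit ideal, so nothing changes there. On $\{s\neq0\}$ we have $t=\pi s^{-1}$, so it becomes $(x,\pi^a)$, whose blowup of $R[[x]]$ produces exactly a rational tail over the special fibre. Symmetric choices handle tail-versus-tail and basepoint-versus-tail. The first thing to verify is that these blowups reproduce the given curves over each axis, possibly after a small resolution of the resulting $A_{a-1}$ singularities, which remain nodal in fibres.

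The main obstacle is extending the sections and checking semistability of the central fibre. I would compute in the two affine charts of $\Bl_Z$, where the exceptional divisor over $0$ is a $\bP^1$. The line bundle is taken to be the pullback of the line bundle on $\cC_-$ twisted by $-m_0$ times the exceptional divisor. On this bundle the sections $x^{m_0}u$ become regular, with the basepoint of the $x^{m_0}$ factor moved onto the new component. The expected fibre over $0$ is the strictly semistable object: a degree $m_0$ rational tail carrying a length $m_0$ basepoint, with a nonvanishing section at the node. The required checks are:
\begin{itemize}
\item the extended sections agree with the given ones over $Y^\circ$, which holds by reducedness, as in \Cref{T:aff-diag};
\item no basepoint of length $>m_0$ appears;
\item the node is not a basepoint. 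For this I would reuse the codimension-two argument from \Cref{P:degenerate-case}: vanishing loci along the section through the node are divisors and cannot be supported only at the origin, taking a linear combination of sections if necessary.
\end{itemize}
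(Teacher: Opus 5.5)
Your route is the paper's: the same reductions, the same local model $R[[x]][s,t]/(st-\pi)$, blowups at $(x,s^at^b)$ with exponents made integral by adjoining a root of $\pi$, and the line bundle $\cO(-m_0E)$. One of your checks fails as proposed, namely that the node of the central fibre is not a basepoint.

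In \Cref{P:degenerate-case} the relevant point is the marking, which is a section over the whole regular surface $Y$. A vanishing locus on it therefore cannot be the closed point alone. In Case II the situation is different:
\begin{itemize}
\item The generic fibre has no node near $p$; the node is created by the blowup.
\item In the chart $xw=s^at^b$ the fibre nodes form $\{x=w=0\}$. This lies only over $\{st=0\}$, or over a single axis when one exponent is zero.
\item So there is no section of nodes over $Y$. A function on this one-dimensional locus can vanish exactly at the origin ($s+t$ on $\{st=0\}$, or $s$ on an axis), and taking linear combinations does not help.
\end{itemize}
What works, and what the paper does at the end of \Cref{T:epsilon-pm} and its variants, is the explicit restriction. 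In the chart $x=s^at^bv$, a monomial $x^j\pi^k$ of $\varphi_i$ contributes $v^js^{k+a(j-m_0)}t^{k+b(j-m_0)}$ to $\psi_i$. At $s=t=0$ only terms with both exponents zero survive, and for $a\neq b$ this forces $(j,k)=(m_0,0)$. Hence $\psi_i$ restricts to $\alpha_i(0)v^{m_0}$ on the new tail. This gives a single basepoint of length exactly $m_0$ at $v=0$, and non-vanishing at the node $v=\infty$ because $\alpha(0)\neq 0$. (If $a=b$, the two sides agree near $p$ and the constant blowup is the filling.)

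Three further points.
\begin{itemize}
\item \emph{Regularity.} The same formula covers what you skip when you say the sections $x^{m_0}u$ ``become regular''. The $\pi\beta_i$ terms are regular only if $k+a(j-m_0)\ge 0$ and $k+b(j-m_0)\ge 0$ for every monomial; this is the paper's constraint $a,b\le\lambda_\cL$. Here it is automatic: these inequalities are exactly regularity along the two exceptional divisors over the axes, where the sections are the given ones. Equivalently, apply Hartogs on the normal threefold $\Bl_Z$, in which the central fibre has codimension two.
\item \emph{No resolution.} Resolving the $A_{a-1}$ points is neither needed nor allowed. The fibres are already nodal (\Cref{L:flatfamilyofnodalcurves}), and a resolution would insert degree-zero rational bridges, which violate $\epsilon_0$-semistability.
\item \emph{Translation of $x$.} This is a point the paper passes over: its casework fixes one coordinate, although e.g.\ $(x-\pi,\pi^2)$ also gives a nodal family. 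You should still say why a single translation serves both sides. A tail over an axis is the blowup at some $(x-c,\pi^r)$. It has degree $m_0$ exactly when $\min_i\mathrm{ord}(\varphi_i)=m_0r$ for the monomial valuation with $\mathrm{ord}(x-c)=r$ and $\mathrm{ord}(\pi)=1$. This forces the disc $\{\mathrm{ord}(x-c)\ge r\}$ to contain all $m_0$ zeros near $p$ of a general member $\sum_i\lambda_i\varphi_i$. So the two discs are nested, and after adjoining a root of $\pi$ both can be centred at one such zero.
\end{itemize}
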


We reduce to the case of filling over $\Spec(R[s, t]/(st-\pi))$ by \Cref{R:fill-without-Gm-action}, and uniqueness of the filling also follows from \Cref{R:unique-filling}. We analyze each component of the source curve separately as discussed in \Cref{L:generically-smooth}. The case where the generic fibre is a single spinning rational tail of degree $m_0$ with a length $m_0$ basepoint (Case I) is is \Cref{P:degenerate-case}. We are left to produce a filling for the case where the generic fibre is $\epsilon_-$-stable (Case II). To show $S$-completeness holds in this situation, we analyze the local $\bA^1$-filling problem discussed in the above subsection for all possible configurations of $\epsilon_0$-semistable quasimaps over $\Spec(R[s, t]/(st-\pi))\setminus 0$. Throughout the analysis, we freely adjoin roots of the uniformizer $\pi$, which is allowed by \Cref{R:adjoin-roots}.

\paragraph{Case Analysis for Local Fillings.}
A map from $\Spec(R[s, t]/(st-\pi)) \setminus 0$ to the stack $\bigQmap$ corresponds to two families of $\epsilon_0$-semistable quasimaps over $\Spec(R)$ that agree over the generic point $\Spec(K)$. Each family has its own special fibre over the closed point of $\Spec(R)$; we refer to these as the two special fibres below.  In the rest of this section, we analyze all possible configurations of these two special fibres and construct in each case an extension over the codimension two point $s=t=0$.

\paragraph{Case 1: $\epsilon_-$-stable on both $(0,1)$ and $(1,0)$, or $\epsilon_+$-stable on both $(0,1)$ and $(1,0)$}\label{C:trivial-case} We  consider the case where the special fibres over both copies of $\Spec(R)$ are strictly $\epsilon_-$-stable, and the $\epsilon_+$-stable case is analogous. Openness of $\epsilon_-$-stability implies that the map from the generic point $\Spec(K)$ is $\epsilon_-$-stable as well. The $\epsilon_-$-stable locus is separated by \Cref{T:e-stableDM}, so we get unique filling over $s = t = 0$ that is $\epsilon_-$-stable.

\paragraph{Case 2: Blowing up at a strict transform of one axis.} Next, we analyze the situation where we blowup at a strict transform of either the $s$-axis or the $t$-axis. Without loss of generality, we outline the construction for a blowup at a strict transform of the $t$-axis, and the $s$-axis is analogous. For this case, note that both a family of $\epsilon_+$-stable and $\epsilon_0$-semistable quasimaps have an underlying $\epsilon_-$-stable family via the contraction morphisms in \Cref{T: contraction_pm} and \Cref{T: e_0-contraction}, respectively.

\subparagraph{Case 2a: $\epsilon_+$-stable over $(1,0)$ and $\epsilon_-$-stable over $(0,1)$.} Consider the situation where over one axis, we get the $\epsilon_-$-stable limit where a length $m_0$ basepoint develops. On the other, we get the $\epsilon_+$-stable limit with a degree $m_0$ rational tail. Over $\Spec(K)$, we have both $\epsilon_-$ and $\epsilon_+$-stability.

\begin{prop}
\label{T:epsilon-pm}
    The quasimap to $\bP^N$ from the family of nodal curves given by $\mathrm{Bl}_{(x, t^b)}(\bA_x^1 \times \STr)$ for certain $b \in \bQ_{>0}$ is $\epsilon_-$-stable over (0,1), $\epsilon_+$-stable over (1,0), and is $\epsilon_0$-semistable over $s = t = 0$, after passing to a ramified cover.
\end{prop}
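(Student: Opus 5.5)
We work in the local $\bA^1$-model set up above, via \Cref{lem: affine-blowup}, \Cref{L: first_pushout} and \Cref{lem:blowup_completion}. Near the point $p$ of the underlying $\epsilon_-$-stable family, the curve is $\Spec R[[x]]$, which we replace by $\bA^1_x$. The given data over $Y^\circ$ are as follows. Over $(0,1)$ (where $t\neq 0$, $s=0$) there is a length $m_0$ basepoint at $p$. Over $(1,0)$ there is a degree $m_0$ rational tail attached at $p$. Generically the quasimap has no basepoint of length $m_0$ at $p$.

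\textbf{Step 1: the local form of the sections.} After adjoining a root of $\pi$ (allowed by \Cref{R:adjoin-roots}), we may write the sections of the trivialised line bundle near $p$ on the $(0,1)$ family. Over $\Spec R$ with uniformizer $\pi$, they have the local form $s_j = \prod_{k=1}^{m_0}(x - a_k)\cdot u_j + (\text{higher order})$. Here the generic basepoints $a_k\in K$ have valuations $v(a_k)>0$ and coalesce to $x=0$ in the special fibre. After a further ramified base change, the $a_k$ can be written as $a_k=\alpha_k\pi^{b_k}$ with $\alpha_k$ units. On $\STr$ we have $\pi=st$. On the $t$-axis $s$ is a unit, so $\pi^{b}$ corresponds to $t^b$ up to units. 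We set $b=\min_k b_k$, which is positive and rational.

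Since the $(1,0)$ limit is a rational tail of degree exactly $m_0$, the basepoints must coalesce at a single rate. Equivalently, $b$ is the unique rate at which the rescaled points $a_k/\pi^b$ have a limit that is not all zero. This pins down $b$, and it should match the blow-up of the $\epsilon_+$-family described in \Cref{lem:filling-is-a-blowup}.

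\textbf{Step 2: the blow-up and the extended sections.} We form $\mathrm{Bl}_{(x,t^b)}(\bA^1_x\times\STr)$, which makes sense after passing to the cover where $t^b$ is a function. Restricted to $t\neq0$, the centre is empty, so the family over $(0,1)$ is unchanged and keeps its length $m_0$ basepoint. Over $s\neq0$, the exceptional divisor is a $\bP^1$ with coordinate $[x:t^b]$, giving the rational tail. We identify this with the $(1,0)$ family using \Cref{L:open-dense}: the two families agree off $p$.

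We pull back the line bundle and twist by $-m_0E$, where $E$ is the exceptional divisor. We then take the sections $\tilde s_j = s_j/(\text{local equation of }E)^{m_0}$. On the tail, these become the homogenised polynomials $\prod_k (x-\alpha_k t^b)$ times units, plus corrections that vanish on $E$. By \Cref{lem:blowup_completion} these local data glue to a global quasimap. It agrees with the given one over $Y^\circ$ because a quasimap is determined on a dense open (reducedness, as in \Cref{T:aff-diag}).

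\textbf{Step 3: the fibre over $s=t=0$ (main obstacle).} The fibre of the exceptional divisor over the origin is again a $\bP^1$ tail of degree $m_0$. The restricted sections are the leading-order terms $\prod_k(x-\alpha_k t^b)$ with $s=t=0$ substituted. The worry is that all $\alpha_k t^b$ might vanish there, so that the $\alpha_k$ limit to $0$ on the central tail. That would make the tail carry a single length $m_0$ basepoint at $[0:1]$, or at worst something of length exceeding $m_0$, or a basepoint at the node $[1:0]$.

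We handle this as follows. The total vanishing order on the tail is $m_0$, so no point has length greater than $m_0$. The node corresponds to $t^b=0$ with $x\neq0$, and there the product is $x^{m_0}u_j$, which is nonzero since some $u_j(0)\neq0$. So the basepoint avoids the node. The quasimap is therefore an $\epsilon_0$-semistable tail of degree $m_0$, possibly with a length $m_0$ basepoint, and this is exactly a strictly semistable object allowed by \Cref{D:e0-stability}. Ampleness for $\epsilon_0+\delta$ holds because the tail has degree $m_0$. The delicate points are, first, checking that the choice $b=\min b_k$ produces the correct $(1,0)$ tail rather than a blown-up non-reduced structure, and second, keeping track of the units in $s$ along the $t$-axis after the ramified cover.
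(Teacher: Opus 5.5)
There is a genuine gap, and it sits in Step 1 rather than Step 3. You model the sections near $p$ as $\prod_{k=1}^{m_0}(x-a_k)\,u_j+(\text{higher order})$, with \emph{common} generic basepoints $a_k$ coalescing at $p$. That model does not fit this proposition. Here the generic fibre is $\epsilon_+$-stable, and typically it has no basepoints near $p$ at all. The zeros near $x=0$ are zeros of individual sections, they differ from section to section, and they become a common length-$m_0$ basepoint only in the special fibre over $(0,1)$.

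For example, take $\varphi_0=x^{m_0}$ and $\varphi_1=\pi$.
\begin{itemize}
\item The generic fibre is basepoint-free, so there are no $a_k$, and $\min_k b_k$ is the minimum of an empty set.
\item Nevertheless the correct centre is $(x,t^{1/m_0})$. This is also why the ramified cover is needed.
\item On the $(1,0)$ tail the sections restrict to $v^{m_0}$ and $u^{m_0}$, where $[u:v]=[t^b:x]$ are homogeneous coordinates on the exceptional divisor $E$. These are not unit multiples of a single polynomial $\prod_k(v-\alpha_k u)$.
\end{itemize}
So the ``corrections'' you discard as vanishing on $E$ are exactly what make this tail $\epsilon_+$-stable. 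Likewise, nothing forces the zeros to coalesce ``at a single rate''; only the minimal rate matters.

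Consequently, the two facts that actually pin down $b$ are never established: that $\varphi_i/f^{m_0}$ has no poles along $t=0$, and that the fibre over $(1,0)$ is $\epsilon_+$-stable. You list them as ``delicate points'', but they are the content of the proposition. Agreement with the given $(1,0)$ family off $p$ does not substitute for this. Two curve families that agree generically need not have the same special fibre. The identification comes from $\epsilon_+$-stability of the constructed fibre together with separatedness of $\Qmapplus$, as in \Cref{R:+unique}.

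The paper avoids tracking roots. It writes $\varphi_i=x^{m_0}\alpha_i(x)+\pi\beta_i(x,\pi)$ with $\alpha_0(0)=1$ and expands $\beta_i=\sum a_{j,k}x^j\pi^k$. In the chart $u\neq 0$ it computes that the $\pi\beta_i$-part of $\psi_i$ is $\sum a_{j,k}(v/u)^j s^{k+1}t^{(k+1)+b(j-m_0)}$, up to the factor $u^{m_0}$. Generic $\epsilon_+$-stability gives some $a_{j,k}\neq 0$ with $j<m_0$. Taking $b=\min\{(k+1)/(m_0-j)\}$ over these triples is then the largest $b$ with no poles in $t$. The minimizing term survives at $s=1,t=0$ with $v/u$-degree less than $m_0$. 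If you want a root-theoretic version, take the minimum valuation over the roots of \emph{all} sections after a generic linear change of coordinates, not over common basepoints.

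Conversely, your ``main obstacle'' in Step 3 is the easy part. Since $\pi=st$, every $\pi\beta_i$-term carries a positive power of $s$. So at $s=t=0$ the sections are exactly $\alpha_i(0)v^{m_0}$, which gives a single length-$m_0$ basepoint at $[1:0]$, away from the node at $[0:1]$. You drop the factor $s^{b_k}$ when you write $\pi^{b_k}\sim t^{b_k}$, which is legitimate only on the open set $s\neq 0$, and that excludes the origin. That factor is precisely what makes the central tail strictly semistable, rather than ``possibly'' so.
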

\begin{proof}
We start with the data of an $\epsilon_-$-stable family over the copy of $\Spec(R)$ where $t = 1, s = \pi$ given below:

\begin{itemize}
    \item $\cC_1 = \bA^1_R$
    \item $\cL_1 = \cO_{\bA^1_R}$
    \item Sections $\varphi_i(x, \pi) = x^{m_0} \alpha_i(x) + \pi\beta_i(x, \pi)$ for $0 \le i \le N$ that are polynomials in $x$ and $\pi$, developing a length $m_0$ basepoint at $s=\pi = 0, x = 0$. Up to a linear change of coordinates, we can consider $\alpha_0(0) = 1$.
\end{itemize}

Let $E \cong \bP^1$ be the exceptional divisor of the blowup at the ideal $(x, t^b)$ with coordinates $[u :v]$. The blowup has equation $xu = t^bv$. We can describe the two charts of the blowup as follows:

\begin{itemize}
    \item$U_1$ is the locus where $v \neq 0$, with coordinates $x, s, \frac{u}{v}$. We have $t^b = x \cdot \frac{u}{v}$. This chart contains the node.
    \item$U_2$ is the locus where $u \neq 0$, with coordinates $s, t, \frac{v}{u}$. We have $x = t^b \cdot \frac{v}{u}$.
\end{itemize}

Over the copy of $\Spec(R)$ where $s = 1,t = \pi$, we construct the unique $\epsilon_+$ stable limit, by developing a degree $m_0$ rational tail via a blowup. The line bundle giving the quasimap is $\cO(-m_0E)$. We describe $N+1$ sections $\psi_i$ of $\cO(-m_0E)$.

The blowup gives a canonical section $f: \cO \xrightarrow{} \cO(E)$, trivialized away from $E$. Over the locus where $s \neq 0, t \neq 0$, the sections $\varphi_i$ and $\psi_i$ agree via $f$. We can describe this canonical section $f$ chartwise:

\begin{itemize}

    \item $\frac{x}{v}$ in the chart $U_1$ 
    \item $\frac{t^b}{u}$ in the chart $U_2$
\end{itemize}

We now construct the desired sections $\psi_i$ of $\cO(-m_0E)$. First, we pass to a ramified cover, adjoining $\sqrt[c]{\pi}$ for $c \in \bN$. This allows us to make sense of $b$ as a rational number where the denominator divides $c$. In the $U_1$ chart containing the node, our sections never vanish since basepoints are disjoint from nodes of the quasimap, so we focus on the chart $U_2$. We have:
\begin{align*}
    \psi_i\left(t^b \cdot \frac{v}{u}, \pi \right) \cdot f^{-m_0} &= \left[t^{bm_0}\left(\frac{v}{u}\right)^{m_0} \alpha_i\left(t^b \cdot \frac{v}{u}\right) + st \beta_i\left(t^b \cdot \frac{v}{u}, st\right)\right] \cdot \left(\frac{t^b}{u}\right)^{-m_0}\\
    &= \left[\left(\frac{v}{u}\right)^{m_0} \alpha_i\left(t^b \cdot \frac{v}{u}\right) + st^{1-bm_0} \beta_i\left(t^b \cdot \frac{v}{u}, st\right)\right] \cdot u^{m_0}
\end{align*}

We want to ensure we have an $\epsilon_+$-stable limit where $s = 1,t = \pi$. We focus on the $\beta_i$ term in order to ensure the correct basepoint length.
\begin{align*}
st^{1-bm_0} \beta_i\left(t^b \cdot \frac{v}{u}, st\right) &= st^{1-bm_0} \sum_{j_i, k_i \ge 0} a_{j_i,k_i}\left(t^b \cdot \frac{v}{u}\right)^{j_i}(st)^{k_i}\\
&= \sum_{j_i, k_i \ge 0} a_{j_i,k_i}\left(\frac{v}{u}\right)^{j_i}s^{(k_i + 1)}t^{(k_i + 1) + b(j_i - m_0)}
\end{align*}
To get the $\epsilon_+$-stable limit, we need the existence of an $(i, j_i, k_i)$ triple where $a_{j_i,k_i} \neq 0$ and $0 \leq j_i \leq m_0-1$. We claim there exists at least one $(i, j_i, k_i)$ triple where $a_{j_i,k_i} \neq 0$ and $0 \leq j_i \leq m_0-1$. Suppose there is no such $(i, j_i, k_i)$ triple. Then, all $a_{j_i,k_i}$ could be zero, in which case all of the sections are of the form $x^{m_0}\alpha_i(x)$ and the generic fibre violates $\epsilon_+$-stability. Thus we have at least one $(i, j_i, k_i)$ triple where $a_{j_i,k_i} \neq 0$. Now, suppose that all triples with $a_{j_ik_i} \neq 0$ have $j_i \ge m_0$. Then, there is a factor of $x^{m_0}$ in each $\psi_i$ section, and the generic fibre again violates $\epsilon_+$-stability. Thus, we have at least one $(i, j_i, k_i)$ triple where $a_{j_i,k_i} \neq 0$ and $0 \leq j_i \leq m_0-1$. To get an $\epsilon_+$-stable limit in the special fibre where $s = 1, t = \pi = 0$, we need the exponent of $t$ for this triple to be zero, getting rid of the dependence on $t$, so the length of the basepoint is bounded above by some $j_i \leq m_0-1$.

In addition, to get valid sections $\psi_i$ of $\cO(-m_0E)$, we need no poles in the $t$ variable to ensure the sections extend over $t = \pi = 0$. To guarantee this, we need that for every $(i, j_i, k_i)$ triple where $a_{j_i,k_i} \neq 0$ and $0 \leq j_i \leq m_0-1$,  that $b \le \frac{k_i+1}{m_0 -j_i}$. 

We choose $b$ as follows. First, find all $(i, j_i, k_i)$ triples where $a_{j_i,k_i} \neq 0$ and $0 \leq j_i \leq m_0-1$; there must be at least one such triple. Then, we have 
$$b : = \min_{\substack{(i, j_i, k_i)\\a_{j_i,k_i} \neq 0\\
0 \leq j_i \leq m_0-1}} \left\{\frac{k_i + 1}{m_0-j_i}\right\} > 0.$$
For this choice of $b \in \bQ_{> 0}$, we get $b \le \frac{k_i+1}{m_0 -j_i}$ for each $(i, j_i, k_i)$ triple where $a_{j_i,k_i} \neq 0$ and $0 \leq j_i \leq m_0-1$. For all other triples where $j_i \ge m_0$,  we need $b \ge \frac{k_i+1}{m_0 -j_i}$, which is trivially satisfied since the ratio $\frac{k_i+1}{m_0 -j_i}$ is negative.

To analyze the filling where $s = t = 0$, our sections $\psi_i$ become $\left(\frac{v}{u}\right)^{m_0} \alpha_i\left(t^b \cdot \frac{v}{u}\right)$ where $\alpha_0(0) = 1$, and we get an $\epsilon_0$-semistable degree $m_0$ rational tail with a length $m_0$ basepoint at $[1:0]$.
\end{proof}

The following lemma ensures that the family of curves used in the construction above is indeed a flat family of nodal curves.
\begin{lem} \label{L:flatfamilyofnodalcurves}
    The family $\mathrm{Bl}_{(x, t^b)}(\bA_x^1 \times \STr)$ is a flat family of nodal curves for $b \in \bQ_{> 0}$.
\end{lem}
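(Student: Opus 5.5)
The plan is to compute the blowup explicitly in charts and check flatness and the nodal condition fiber by fiber. After adjoining a root $\pi^{1/c}$ as in \Cref{R:adjoin-roots}, with $c$ divisible by the denominator of $b=p/c$, we may replace $s,t$ by $s'=s^{1/c}$, $t'=t^{1/c}$. Then $\STr$ is replaced by $\Spec(R'[s',t']/(s't'-\pi'))$ with $\pi'=\pi^{1/c}$, and $t^b=t'^p$ is an honest element. So it suffices to treat $b=p\in\bZ_{>0}$ on $W:=\bA^1_x\times\Spec(R[s,t]/(st-\pi))$, which is a regular two-dimensional... in fact a three-dimensional scheme, smooth over $\Spec(R[s,t]/(st-\pi))$. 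We blow up the ideal $I=(x,t^p)$. It is cut out by two elements, and $x,t^p$ form a regular sequence because $t^p$ is a nonzerodivisor modulo $x$. Hence the blowup embeds as the hypersurface $xu=t^pv$ in $\bP^1_{[u:v]}\times W$, and the two charts are exactly $U_1,U_2$ from the proof of \Cref{T:epsilon-pm}.

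Next I verify flatness over $B:=\Spec(R[s,t]/(st-\pi))$. In $U_2$ the coordinate ring is $R[s,t,w]/(st-\pi)$ with $x=t^pw$, which is a polynomial ring over $\cO_B$ and therefore smooth. In $U_1$ the coordinate ring is $\cO_B[x,z]/(xz-t^p)$. This is flat over $\cO_B$ because $xz-t^p$ is a monic-type relation. Concretely, the ring is free over $\cO_B$ with basis given by monomials $x^i$ ($i\ge 0$) and $z^j$ ($j\ge1$), which can be seen by reducing $xz$ to $t^p$. Alternatively, one can argue via the local flatness criterion: $xz-t^p$ is a nonzerodivisor on every fibre $k(b)[x,z]$. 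Properness over $B$ comes from the blowup of $\bP^1_x\times B$ (or of the projective family $\cC_-$ in the global picture), since blowups are projective.

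The key step is identifying the fibres. Over points of $B$ with $t\neq0$, the ideal is the unit ideal along $x=0$ being cut out by a single equation, so the blowup is an isomorphism and the fibre is $\bA^1$, which is smooth. Over points with $t=0$, which is where $s$ is arbitrary and includes the origin and the locus $(1,0)$ after the $\pi$ relation, the fibre of $U_1$ is $\Spec k[x,z]/(xz)$: a single node at $x=z=0$. The fibre of $U_2$ is $\bA^1_w$. Gluing these, the fibre is $\bA^1_x\cup_{\text{node}}\bP^1$, where the exceptional $\bP^1$ is attached at one point, giving a rational tail in the compactified picture. Thus all fibres are reduced curves with at worst nodal singularities, the genus is locally constant, and the family is a flat family of nodal curves. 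Finally, I check that these local models glue into the global family via \Cref{lem:blowup_completion}.

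The main obstacle I expect is the case of non-integral $b$. Here I must make sure that passing to the ramified cover is compatible with the $\STr$ structure: the cover replaces $\STr$ by the analogous stack for $R'$, with the $\bG_m$-weights rescaled. I also need that the ideal $(x,t^b)$ really means $(x,t'^p)$ on that cover. I would handle this exactly as in \Cref{R:adjoin-roots}, noting that the resulting stack is again of the form $\overline{ST}_{R'}$.
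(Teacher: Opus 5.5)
Your proposal is correct and follows essentially the paper's route: compute the blowup in the two charts $U_1$ (where $xz=t^b$, giving a node over $t=0$) and $U_2$ (where $x=t^b w$, a polynomial ring over the base), and read off the special fibre $\bA^1\cup\bP^1$. The paper only checks the fibre over the origin (invoking openness of the nodal condition) and justifies flatness tersely via the regular sequence $(s,t)$, so your explicit free basis $\{x^i\}_{i\ge 0}\cup\{z^j\}_{j\ge 1}$ for $\cO_B[x,z]/(xz-t^p)$ and your fibre-by-fibre check make those two points more complete than the paper's argument.
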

\begin{proof}
    We check that each geometric fibre is a nodal curve. This is an open condition so we check the fibre at the origin. In the chart $U_1$, the ring is
    $$R\left[x, s, t, \frac{u}{v}\right] \big/ \left(s, t, t^b - \frac{u}{v}x, st-\pi\right) \cong K\left[x,\frac{u}{v}\right] \big/ \left(x\cdot\frac{u}{v}\right)$$
    and we have a node. In the chart $U_2$, we get
    $$R\left[x, s, t, \frac{v}{u}\right]\big/ \left(s, t, x-t^b\frac{v}{u}, st-\pi\right) \cong K\left[\frac{v}{u}\right]$$
    and the special fibre of the family is thus a nodal curve: $\bA^1$ with a rational tail. Flatness of the family follows since $(s, t)$ is a regular sequence. \end{proof}
\begin{rem}\label{R:+unique}
    We've constructed a $\epsilon_+$-stable extension over the copy of $\Spec(R)$ where $s=1, t= \pi$. This is the unique filling since the $\epsilon_+$-stable locus is separated.
\end{rem}

\subparagraph{Case 2b: $\epsilon_0$-semistable over $(1,0)$ and $\epsilon_-$-stable over $(0,1)$.} We construct an $\epsilon_0$-semistable filling here via a similar blowup, and we show that blowing up at an ideal of the form $(x, t^b)$ is the only way to introduce a rational tail satisying the stability condition.

\begin{prop}\label{T:epsilon-0-}
    The quasimap to $\bP^N$ from the family of nodal curves given by $\mathrm{Bl}_{(x, t^b)}(\bA_x^1 \times \STr)$ for certain $b \in \bQ_{>0}$ is $\epsilon_-$-stable over $(0,1)$, $\epsilon_0$-semistable over $(1,0)$ and is $\epsilon_0$-semistable over $s = t = 0$, after passing to a ramified cover.
\end{prop}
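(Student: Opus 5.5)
The proof will follow the computation in \Cref{T:epsilon-pm}, with one change: over $(1,0)$, where $s=1$ and $t=\pi$, the target limit is no longer the $\epsilon_+$-stable one. It is a degree $m_0$ rational tail that still carries a length $m_0$ basepoint. Over $t=1$, $s=\pi$, I start from the same $\epsilon_-$-stable data as before: $\cC_1=\bA^1_R$, $\cL_1=\cO$, and sections $\varphi_i=x^{m_0}\alpha_i(x)+\pi\beta_i(x,\pi)$ with $\alpha_0(0)=1$. By \Cref{L:open-dense} and \Cref{lem:filling-is-a-blowup}, the family over $(1,0)$ is a blowup of the underlying $\epsilon_-$-stable family along an ideal supported over $p$. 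A rational tail must have degree $\geq m_0$, and at most one tail can appear at $p$ by the Proposition in \Cref{S:preliminaries}. So the exceptional curve is a single $\bP^1$ of degree exactly $m_0$. Locally the given family over $(1,0)$ is therefore $\Bl_{(x,\pi^{b'})}\bA^1_R$ for some $b'$, after adjoining a root of $\pi$ (\Cref{R:adjoin-roots}). I expect to justify this by noting that the only $\bG_m$-compatible ideals in $R[[x]]$ cosupported at $x=\pi=0$ whose blowup is a smooth-total-space family with a single reduced $\bP^1$ are those of the form $(x,\pi^{b'})$. This gives the uniqueness claim promised before the statement.

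Next I take $b=b'$ and form $\Bl_{(x,t^b)}(\bA^1_x\times\STr)$, which is a flat family of nodal curves by \Cref{L:flatfamilyofnodalcurves}. I use the same charts $U_1,U_2$, the same canonical section $f$, and the line bundle $\cO(-m_0E)$. In $U_2$ the sections again become
\[
\Bigl[\bigl(\tfrac{v}{u}\bigr)^{m_0}\alpha_i\bigl(t^b\tfrac{v}{u}\bigr)+\sum a_{j_i,k_i}\bigl(\tfrac{v}{u}\bigr)^{j_i}s^{k_i+1}t^{(k_i+1)+b(j_i-m_0)}\Bigr]u^{m_0}.
\]
The difference from Case 2a is in how $b$ is chosen. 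The $(1,0)$ limit has a length $m_0$ basepoint on the tail. This means that, after setting $s=1$, every term with $j_i\le m_0-1$ has a strictly positive $t$-exponent, and no terms have negative exponent. Equivalently,
\[
b<\min_{a_{j_i,k_i}\neq0,\ j_i<m_0}\frac{k_i+1}{m_0-j_i}.
\]
Any rational $b$ in this range is allowed, so I take the $b$ read off from the given $(1,0)$ family. That family lies in this range: it is a genuine quasimap, so the sections have no poles, and it has a length $m_0$ basepoint, so there is no $t$-free term with $j_i<m_0$. The terms with $j_i\geq m_0$ have no pole because $b>0$. Restricting to $s=1$ then recovers exactly the given family over $(1,0)$, with a degree $m_0$ tail and basepoint at $[1:0]$ given by $(v/u)^{m_0}\alpha_0(0)$. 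Over $t=1$ nothing is blown up, so we recover the $\epsilon_-$-stable family there.

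At $s=t=0$ every $\beta$-term vanishes, because each carries a factor $s^{k_i+1}$. The fibre is then $\bA^1$ with a rational tail, with sections $(v/u)^{m_0}\alpha_i(0)$, which not all vanish since $\alpha_0(0)=1$. This is a degree $m_0$ tail with a single length $m_0$ basepoint at $[1:0]$, away from the node. In chart $U_1$ the sections are nonvanishing near the node, as in \Cref{T:epsilon-pm}. Hence the fibre over the origin is $\epsilon_0$-semistable.

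The main obstacle is the first step: showing that the given $(1,0)$ family is locally exactly $\Bl_{(x,\pi^{b})}$ with the same $b$ that makes the sections regular. This amounts to matching the line bundle $\cO(-m_0E)$ and the sections on $\Bl_{(x,\pi^b)}\bA^1_R$ with the given ones. I would do this using the pushout of \Cref{lem:blowup_completion} together with the requirement that the sections agree with $\varphi_i$ over $s,t\neq0$, which determines them by reducedness.
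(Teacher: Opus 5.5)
Your core argument is the paper's: blow up $(x,t^b)$ on $\bA^1_x\times\STr$, write the $\psi_i$ in the chart $U_2$, and take $0<b<\min_{j_i<m_0}\frac{k_i+1}{m_0-j_i}$ (any $b>0$ if no term with $j_i<m_0$ occurs), so that there are no poles and the $(1,0)$ fibre keeps its length $m_0$ basepoint at $[1:0]$; then only $(v/u)^{m_0}\alpha_i(0)$ survives at $s=t=0$. Your strict inequality is the right one for strict semistability over $(1,0)$ (the paper's proof writes $\le$, its summary table $<$).

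The identification of a prescribed $(1,0)$ family with $\Bl_{(x,\pi^{b'})}$, which you call the main obstacle, is not needed for this existence statement. The uniqueness claim you plan to use for it is also false as stated:
\begin{itemize}
\item Centers $(x-g(\pi),\pi^{b'})$, with $g$ of $\pi$-adic valuation strictly between $0$ and $b'$, also give a single reduced tail, but their blowup is not isomorphic over $\bA^1_R$ to $\Bl_{(x,\pi^{b'})}$.
\item The total space has local equation $xz=\pi^{b'}$ at the node, so it is singular unless $\pi^{b'}$ is a uniformizer of the ramified base.
\item When $b'$ equals the minimum, a length $m_0$ basepoint on the tail can sit away from $[1:0]$. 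For example, take $\varphi_0=(x-\pi)^{m_0}$ and $\varphi_1=2(x-\pi)^{m_0}+\pi^{M}$ with $M>m_0$. Here $b'=1$ equals the minimum and the basepoint is at $v/u=1$. So ``length $m_0$ basepoint, hence no surviving term with $j_i<m_0$'' does not hold.
\end{itemize}
Such a matching therefore only works after re-choosing the formal coordinate $x$ at $p$.
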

\begin{proof}
We use the same approach as \Cref{T:epsilon-pm}. We start with the same intial $\epsilon_-$-stable data and construct the $N+1$ sections $\psi_i$ of $\cO(-m_0E)$. We again pass to a ramified cover, adjoining $\sqrt[c]{\pi}$ for $c \in \bN_{> 0}$ in order to make sense of $b \in \bQ_{>0}$. As before, we focus on the chart $U_2$ away from the node. We have:
\begin{align*}
    \psi_i\left(t^b \cdot \frac{v}{u}, \pi\right) \cdot f^{-m_0}
    &= \left[\left(\frac{v}{u}\right)^{m_0} \alpha_i\left(t^b \cdot \frac{v}{u}\right) + st^{1-bm_0} \beta_i\left(t^b \cdot \frac{v}{u}, st\right)\right] \cdot u^{m_0}
\end{align*}

To get an $\epsilon_0$-semistable limit where $s = 1,t = \pi$, we need the sections $\psi_i$ to have a basepoint of length $\leq m_0$. Since generically we have $\epsilon_-$ stability, we have that $\alpha_0(0) = 1$ up to a linear change of coordinates. Thus, the common order of vanishing of the $\psi_i$'s with respect to the $\frac{v}{u}$ coordinate in this chart is bounded above by $m_0$. 

To get valid sections $\psi_i$ of $\cO(-m_0E)$, we need no poles in the $t$ variable. If we have an $(i, j_i, k_i)$ triple where $a_{j_i,k_i} \neq 0$ and $0 \le j_i < m_0$, then generically we have $\epsilon_+$ stability and we choose
$$ 0 < b \leq \min_{\substack{(i, j_i, k_i)\\a_{j_i,k_i} \neq 0}} \left\{\frac{k_i + 1}{m_0-j_i} \right\}.$$
Otherwise, if all $(i, j_i, k_i)$ triples where $a_{j_i,k_i} \neq 0$ are such that $j_i \ge m_0$, we are free to choose any $b \in \bQ_{>0}$. As well, if all $a_{j_i,k_i} = 0$, then any choice of $b > 0$ will work.

To analyze the filling where $s = t = 0$, our sections become just $\left(\frac{v}{u}\right)^{m_0} \alpha_i\left(t^b \cdot \frac{v}{u}\right)$ where $\alpha_0(0) = 1$, and we get an $\epsilon_0$-semistable degree $m_0$ rational tail with a length $m_0$ base point at $[1:0]$.
\end{proof}

\begin{lem}\label{L: unqiue ideal}
    Blowing up at an ideal of the form $(x, t^b)$ is the only way to blowup an ideal supported at $x=t=0$ and get a reduced flat family of nodal curves over $\Spec(R[s, t]/(st-\pi))$.
\end{lem}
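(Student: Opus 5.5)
The plan is to reduce the classification to a statement about a single one-parameter degeneration, and then use the structure of blowups of a two-dimensional regular local ring. Write $A := \widehat{\cO_{\bA^1_R,0}} \otimes_R R[s,t]/(st-\pi) \cong R[[x]][s,t]/(st-\pi)$. By \Cref{lem: affine-blowup}, this is the local model at a point $p \notin \cC_-^\circ$. Let $I \subset A$ be an ideal with $\sqrt{I} = (x,s,t)$, and suppose $\Bl_I$ is a reduced flat family of nodal curves over $Y$. Throughout I work up to replacing $I$ by its integral closure, since this does not change the blowup, and I freely adjoin roots of $\pi$ as in \Cref{R:adjoin-roots}. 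The goal is to show that, after an automorphism of $R[[x]]$ of the form $x \mapsto x - g$, the integral closure of $I$ equals that of $(x,t^b)$ for some $b \in \bQ_{>0}$.

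\textbf{Step 1 (restriction to the $t$-axis).} Restrict to the copy $\Spec(R)$ given by $s=1$, $t=\pi$. There the ideal becomes $I_1 \subset R[[x]]$, supported at the closed point of the two-dimensional regular local ring $R[[x]]$. The fibre of $\Bl_{I_1}$ over $\pi=0$ must be a reduced nodal curve consisting of the strict transform of $\{\pi=0\}$ together with exceptional components. Over the other axis $(0,1)$ nothing is blown up, since the limit there is the $\epsilon_-$-stable one. By Zariski's theory of complete ideals and the factorization of proper birational maps of regular surfaces into point blowups, the exceptional locus is a chain or tree of $\bP^1$'s. The exceptional components that are contracted in the normalization are exactly those coming from the Rees valuations of $I_1$. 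Nodality together with the requirement that the fibre is $\bA^1$ with tails forces exactly one Rees valuation $v$. Otherwise we get several components, or a non-nodal singularity at the point where the components meet.

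\textbf{Step 2 (identifying the valuation).} Reducedness of the special fibre means $\pi$ vanishes to order one along $E$, so $v(\pi)=1$ after normalizing. I would expand in a Puiseux-type form (after adjoining a root of $\pi$) and argue that a divisorial valuation centred at the origin of $R[[x]]$ with $v(\pi)=1$ and reduced exceptional fibre must be monomial in suitable coordinates. Concretely, after a change of coordinates $x \mapsto x - g(\pi)$, we should have $v(x)=b$, $v(\pi)=1$, and $v$ is the monomial valuation. The complete ideal with single Rees valuation $v$ is then, up to integral closure and powers, $(x,\pi^{b})$. The surface $\Bl_{(x,\pi^b)}$ is exactly the family checked in \Cref{L:flatfamilyofnodalcurves}, with an $A_{n}$-type singularity at the node, and is therefore nodal.

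\textbf{Step 3 (spreading over $Y$).} Over the locus $t \neq 0$ we have $s = \pi/t$, so the ideal is determined by $I_1$. Since the blowup is flat over the two-dimensional base and $A$ is normal of dimension three, the integral closure of $I$ is determined by its restriction off codimension two. I would therefore deduce $\overline{I} = \overline{(x,t^b)}$ by comparing the two ideals on the punctured spectrum and taking reflexive hulls and integral closures. The $\bG_m$-weights, with $s$ of weight $1$ and $t$ of weight $-1$, force the generator to be a power of $t$ rather than of $\pi$.

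I expect the main obstacle to be Step 2: excluding valuations that are not monomial in any coordinate, such as higher Puiseux pairs. For these I would show that the fibre acquires a non-reduced component or a singularity worse than a node, by computing the multiplicity of $\pi$ along the last exceptional divisor in the minimal resolution sequence.
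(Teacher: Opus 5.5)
\textbf{Comparison with the paper's proof.} Your route is different from the paper's. The paper sandwiches $(x^a,t^b)\subseteq I$, works in a chart of the blowup of $(x^a,t^b)$, finds the central fibre $K[x,v]/(x^a)$, and concludes $a=1$. You instead classify $I$ through the Rees valuations of its restriction $I_1\subset R[[x]]$. That is the better framework, since it handles non-monomial ideals and coordinate changes $x\mapsto x-g(\pi)$.

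\textbf{The main gap is in Step 1.} Reducedness and nodality do \emph{not} force a single Rees valuation. Take $I=(x,t)(x,t^{2})=(x^{2},xt,t^{3})$, which is supported on $\{x=t=0\}$. With $C=R[s,t]/(st-\pi)$, its blowup is covered by three charts:
\begin{itemize}
\item $C[x,w]/(xw-t)$;
\item $C[u,w']/(uw'-t)$, with $x=u^{2}w'$;
\item $C[z]$, with $x=zt^{2}$.
\end{itemize}
So the blowup is flat over $\Spec(C)$. Its fibre over every point of $\{t=0\}$, including $s=t=0$, is the reduced nodal chain $\bA^1\cup E_1\cup E_2$, with $E_1$ a bridge and $E_2$ a tail. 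The paper's chart computation also misses this, because it only ever blows up $(x^a,t^b)$ and never $I$ itself.

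So the statement needs an input that is not among its hypotheses: $\epsilon_0$-semistability. Concretely, there are no degree-$0$ bridges, tails have degree $\ge m_0$, and the total exceptional degree equals the basepoint length, which is $\le m_0$. Equivalently, use the earlier observation that crossing the wall creates a single tail rather than a tree. You must state this hypothesis and use it. The exceptional fibre over the point is connected and meets the strict transform of $\{\pi=0\}$ in one point. Nodality allows only one exceptional component through that point, so a second component would turn the first into a bridge. This gives exactly one Rees valuation.

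\textbf{Step 2 is easier than you expect.} Along the sequence of point blowups producing $E$, the order of $\pi$ on each new exceptional divisor is the sum of its orders along the divisors through the centre. Hence $\operatorname{ord}_E(\pi)=1$ forces every centre to be a free point of the previous exceptional curve, lying off the strict transform of $\{\pi=0\}$. Such a chain of free points lies on a smooth germ $x=g(\pi)$. Its last divisor is the monomial valuation with $v(x-g(\pi))=n$ and $v(\pi)=1$, so higher Puiseux pairs never arise.

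\textbf{Other steps that need repair.}
\begin{itemize}
\item Your hypothesis $\sqrt{I}=(x,s,t)$ contradicts the answer, since $\sqrt{(x,t^b)}=(x,t)$. With $s\in\sqrt{I}$, the restriction $I_1$ to $s=1$ is the unit ideal, so nothing is blown up over $Y^\circ$. The correct hypothesis is $V(I)=\{x=t=0\}$.
\item Replacing $I$ by $\overline{I}$ does change the blowup; only the normalized blowup is unchanged. This is harmless only because a flat family with reduced nodal fibres and smooth generic fibre has normal total space, and you should say so.
\item In Step 3 the charts are reversed. $I$ is the unit ideal on $t\neq 0$. It is on $s\neq 0$, where $t=\pi/s$, that a $\bG_m$-stable $I$ is governed by $I_1$.
\item The weight argument cannot single out $(x,t^b)$. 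The ideals $(x,\pi^b)=(x,s^bt^b)$ and $(x,s^at^b)$ are equally homogeneous; what excludes them is the support condition.
\item The integral closure is not determined away from the origin: $I$ and $I\cap(x,s,t)^N$ agree there. What rules out extra blowing up over $s=t=0$ is that the fibres must stay one-dimensional for the family to be flat over $Y$.
\end{itemize}
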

\begin{proof}
    We show that blowing up at another ideal supported at the point $x = t = 0$ will not result in every fibre over $\Spec(R[s, t]/(st-\pi))$ being reduced. Blowing up at an ideal $I$ supported at $\{x = t = 0\}$ means that $(x, t) \subseteq \sqrt{I}$. Thus, we are looking at ideals $(x^a, t^b) \subseteq I \subsetneq \bC[x, t]$ where $a, b \ge 1$. Working in the local blowup chart $U_2$ where $u \neq 0$, we have that the central fiber corresponds to the ring
    $$K[x, v]/ (x^a - t^bv) \cong K[x, v]/ (x^a) $$
    which is not reduced if $a > 1$. Thus we must have that $a = 1$. Any other ideal $I$ with $(x, t^b) \subsetneq I \subsetneq \bC[x, t]$ is also of the form $(x, t^b)$.
\end{proof}

\paragraph{Case 3: Blowing up at the strict transform of both the $s$-axis and the $t$-axis.} For this case, note that both a family of $\epsilon_+$-stable and $\epsilon_0$-semistable quasimaps have an underlying $\epsilon_-$-stable family via the contraction morphisms in \Cref{T: contraction_pm} and \Cref{T: e_0-contraction}, respectively. We build off of Case 2.

\subparagraph{Case 3a: Strictly $\epsilon_0$-semistable over $(1,0)$ and $\epsilon_+$-stable over $(0,1)$.} Consider the situation where over the special fibre of one copy of $\Spec(R)$, we get the strictly $\epsilon_0$-semistable limit where a degree $m_0$ rational tail with a length $m_0$ basepoint develops. On the other special fibre, we get the $\epsilon_+$-stable limit with a degree $m_0$ rational tail. Over $\Spec(K)$, we have $\epsilon_+$-stability. 

\begin{prop}
\label{T:epsilon-0+}
    The quasimap to $\bP^N$ from the family of nodal curves given by $\mathrm{Bl}_{(x, s^at^b)}(\bA_x^1 \times \STr)$ for certain $a, b \in \bQ_{>0}$ is $\epsilon_0$-semistable over $(1,0)$, $\epsilon_+$ stable over $(0,1)$, and is  $\epsilon_0$-semistable over $s = t = 0$, after passing to a ramified cover.
\end{prop}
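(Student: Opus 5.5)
I will mirror the argument of \Cref{T:epsilon-pm} and \Cref{T:epsilon-0-}, but now with a blowup centre $(x, s^at^b)$ whose exceptional divisor is nontrivial over both axes. First I record the starting data. By \Cref{L:open-dense} the underlying $\epsilon_-$-stable family over $\Spec(R)$ is locally $\bA^1_R$ with $\cL=\cO$. Its sections have the form $\varphi_i(x,\pi)=x^{m_0}\alpha_i(x)+\pi\beta_i(x,\pi)$ with $\alpha_0(0)=1$, and I expand $\beta_i=\sum a_{j_i,k_i}x^{j_i}\pi^{k_i}$. Since the generic fibre is $\epsilon_+$-stable, the argument in \Cref{T:epsilon-pm} shows there is a triple with $a_{j_i,k_i}\neq 0$ and $0\le j_i\le m_0-1$. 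I then pass to a ramified cover adjoining $\sqrt[c]{\pi}$, so that $s^a,t^b$ make sense for $a,b\in\bQ_{>0}$ with denominators dividing $c$. Flatness and nodality of $\mathrm{Bl}_{(x,s^at^b)}(\bA^1_x\times\STr)$ follow as in \Cref{L:flatfamilyofnodalcurves}: in the chart $U_1$ the local equation is $x\cdot\frac{u}{v}=s^at^b$, which gives a node on every fibre where $s^at^b=0$. \Cref{L: unqiue ideal} indicates that an ideal of this shape is essentially forced.

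Next I write the sections of $\cO(-m_0E)$ in the chart $U_2$, where $x=s^at^b\frac{v}{u}$ and the canonical section is $f=\frac{s^at^b}{u}$. As in the earlier computations, this gives
\[\psi_i=\Big[\big(\tfrac{v}{u}\big)^{m_0}\alpha_i\big(s^at^b\tfrac{v}{u}\big)+\sum a_{j_i,k_i}\big(\tfrac{v}{u}\big)^{j_i}s^{(k_i+1)+a(j_i-m_0)}t^{(k_i+1)+b(j_i-m_0)}\Big]u^{m_0}.\]
There are two requirements. The first is regularity: there must be no negative powers of $s$ or $t$. For every contributing triple with $j_i<m_0$ this means $a,b\le \frac{k_i+1}{m_0-j_i}$. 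The second concerns the limits on the two axes. Over $(0,1)$, where $s=0$ and $t\neq0$, we need an $\epsilon_+$-stable limit. So some triple with $j_i<m_0$ must have $s$-exponent zero, which means
\[a=\min\frac{k_i+1}{m_0-j_i},\]
exactly as $b$ was chosen in \Cref{T:epsilon-pm}. Over $(1,0)$, where $t=0$, we need the strictly $\epsilon_0$-semistable limit: a degree $m_0$ tail carrying a length-$m_0$ basepoint. For this every $t$-exponent with $j_i<m_0$ must be strictly positive, which means
\[0<b<\min\frac{k_i+1}{m_0-j_i}.\]
With these choices the $\beta$-terms survive only over $s=0$. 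At the origin $s=t=0$ the sections reduce to $(\frac{v}{u})^{m_0}\alpha_i(0)u^{m_0}$, a strictly $\epsilon_0$-semistable tail as at the end of \Cref{T:epsilon-0-}. In the chart $U_1$ the sections do not vanish at the node, since basepoints are disjoint from nodes. Finally, I glue the chart computation into global families using \Cref{L: first_pushout} and \Cref{lem:blowup_completion}.

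The main obstacle is matching this blowup to the prescribed data on the two axes. The data over $(1,0)$ and $(0,1)$ is not arbitrary: it comes from a given map from $\STr\setminus 0$. So I must show that the given families over $s\neq0$ and $t\neq0$ are recovered by the blowup, with the given exponents $a,b$. The $\epsilon_+$ side is unique by separatedness of $\Qmapplus$ (\Cref{R:+unique}), and this pins down $a$. On the $\epsilon_0$ side there is a $\bG_m$ of freedom, which is the spinning of the tail. I expect this freedom to correspond to the choice of $b$ in the open interval, up to the $\bG_m$-action and base change. I would first try to verify this by comparing the given family over $t\neq 0$ with $\mathrm{Bl}_{(x,t^b)}$ via \Cref{T:epsilon-0-}, reading off $b$ from where the tail's basepoint sits.
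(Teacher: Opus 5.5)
Your computation is the paper's proof essentially line for line: the same $U_2$ formula for $\psi_i$, $a=\lambda_\cL$ taken from \Cref{T:epsilon-pm} for the $\epsilon_+$ side, $b$ below $\lambda_\cL$ taken from \Cref{T:epsilon-0-} for the $\epsilon_0$ side, and evaluation at $s=t=0$. Your strict inequality $0<b<\lambda_\cL$ is what strict semistability over $(1,0)$ needs; the paper's proof writes $0<b\le a$, while its summary table uses the strict version.

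There is, however, a gap in a step you share with the paper: ``some triple with $j_i<m_0$ has $s$-exponent zero, so the limit over $(0,1)$ is $\epsilon_+$-stable.'' Write $w=\frac{v}{u}$. That limit is given by $P_i(w)=\alpha_i(0)w^{m_0}+\sum a_{j_i,k_i}w^{j_i}$, summed over the triples attaining $\lambda_\cL$. A surviving $j_i<m_0$ term only rules out a length-$m_0$ basepoint at $w=0$; the $P_i$ can all be multiples of $(w-\zeta)^{m_0}$ with $\zeta\neq0$.

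For example, take $N=m_0=1$, $\varphi_0=x-\zeta\pi$ and $\varphi_1=x-\zeta\pi+\pi^6$.
\begin{itemize}
\item The generic fibre is basepoint-free and $\lambda_\cL=1$.
\item $\Bl_{(x,\pi)}$ gives $\psi_0=w-\zeta$ and $\psi_1=w-\zeta+\pi^5$. Their special fibre has a length-one basepoint at $w=\zeta$, so it is strictly semistable, not $\epsilon_+$-stable.
\item The actual $\epsilon_+$-limit is $\Bl_{(x-\zeta\pi,\pi^6)}$. Its centre is not of the form $(x,\pi^a)$, yet it gives a reduced nodal family. So \Cref{L: unqiue ideal}, whose proof only treats monomial ideals, does not force the shape you cite it for.
\end{itemize}

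The missing step is to recentre $x$ first. After a ramified base change, replace $x$ by $x-g$ so that $\lambda_\cL$ is maximal. One way is to centre $x$ at the bubble of the $\epsilon_+$-stable limit, which exists by properness of $\Qmapplus$. After this, an $m_0$-fold common root $\zeta\neq0$ of the $P_i$ is impossible, since translating $x$ by $\zeta\pi^{\lambda_\cL}$ would increase $\lambda_\cL$. With this normalization your computation does give an $\epsilon_+$-stable limit over $(0,1)$.

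Separately, the matching problem you call the main obstacle is not part of this proposition, which only asserts existence for suitable $a,b$. The paper treats it in the remark after the proposition: it uses \Cref{R:+unique} on the $\epsilon_+$ side and \Cref{L: unqiue ideal} on the $\epsilon_0$ side, subject to the caveat above.

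Your proposed way to recover $b$, from where the tail's basepoint sits, would not work. For every $b\in(0,\lambda_\cL)$ the central fibre is the same strictly semistable quasimap, with its basepoint at $[1:0]$, so $b$ is invisible there. It is an invariant of the family over $R$. It appears in the local equation $x\cdot\frac{u}{v}=\pi^{b}$ of the total space at the node, or as $\lambda_\cL-\lambda'_\cL$ in the paper's summary.

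After the recentring above, the given $\epsilon_+$-family over $(0,1)$ is $\Bl_{(x,\pi^{a})}$ with $a=\lambda_\cL$. This settles that side of the matching.
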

\begin{proof}
Via the contraction morphisms in \Cref{T: contraction_pm} and \Cref{T: e_0-contraction}, we start with the data of an $\epsilon_-$-stable family over both copies of $\Spec(R)$. As in previous cases above, this is given by the data $(\bA^1_R, \cO_{\bA^1_R}, \varphi_i(x, \pi) = x^{m_0} \alpha_i(x) + \pi\beta_i(x, \pi))$ where $\alpha_0(0) = 1$ for $0 \le i \le N$.

Let $E \cong \bP^1$ be the exceptional divisor of the blowup at the ideal $(x, s^at^b)$ with coordinates $[u :v]$. The blowup has equation $xu = s^at^bv$. We work in the chart $U_2$ that does not contain the node, where $u \neq 0$, with coordinates $s, t$ and  $\frac{v}{u}$. We adjoin $\sqrt[c]{\pi}$ for $c \in \bN_{> 0}$ in order to make sense of $a, b \in \bQ_{>0}$. The line bundle giving the quasimap after the blowup is $\cO(-m_0E)$. We describe $N+1$ sections $\psi_i$ of $\cO(-m_0E)$. We have:

\begin{align*}
    \psi_i\left(s^at^b \cdot \frac{v}{u}, \pi \right) \cdot f^{-m_0} &= \left[s^{am_0}t^{bm_0}\left(\frac{v}{u}\right)^{m_0} \alpha_i\left(s^at^b \cdot \frac{v}{u}\right) + st \beta_i\left(s^at^b \cdot \frac{v}{u}, st\right)\right] \cdot \left(\frac{s^at^b}{u}\right)^{-m_0}\\
    &= \left[\left(\frac{v}{u}\right)^{m_0} \alpha_i\left(s^at^b \cdot \frac{v}{u}\right) + s^{1-am_0}t^{1-bm_0} \beta_i\left(s^at^b \cdot \frac{v}{u}, st\right)\right] \cdot u^{m_0}
\end{align*}

Without loss of generality, we construct the $\epsilon_+$ stable limit over $(0,1)$ as in \Cref{T:epsilon-pm}, by taking
$$a : = \min_{\substack{(i, j_i, k_i)\\a_{j_i,k_i} \neq 0\\
0 \leq j_i \leq m_0-1}} \left\{\frac{k_i + 1}{m_0-j_i}\right\} > 0.$$
We also construct the $\epsilon_0$-semistable limit over $(1,0)$ as in \Cref{T:epsilon-0-}. Note that we have an $(i, j_i, k_i)$ triple where $a_{j_i,k_i} \neq 0$ and $0 \le j_i < m_0$, due to generic $\epsilon_+$-stability. To construct the $\epsilon_0$-semistable limit, we take $0 < b \le a$.

Lastly, we analyze the filling where $s=t=0$. Our sections $\psi_i$ become $\left(\frac{v}{u}\right)^{m_0} \alpha_i\left(t^b \cdot \frac{v}{u}\right)$ where $\alpha_0(0) = 1$, and we get an $\epsilon_0$-semistable degree $m_0$ rational tail with a length $m_0$ basepoint at $[1:0]$.

\end{proof}
\begin{lem}
    The family $\mathrm{Bl}_{(x, s^at^b)}(\bA_x^1 \times \STr)$ is a flat family of nodal curves for $a, b \in \bQ_{\ge 0}$.
\end{lem}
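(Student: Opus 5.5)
The plan is to follow the proof of \Cref{L:flatfamilyofnodalcurves}, with the monomial $s^at^b$ in place of $t^b$. After adjoining a suitable root $\sqrt[c]{\pi}$ (allowed by \Cref{R:adjoin-roots}), we may assume $a,b$ are non-negative integers, so the base is $Y=\Spec(R[s,t]/(st-\pi))$ and the total space before blowing up is $\bA^1_x\times Y$. The blowup of the ideal $(x, s^at^b)$ is then covered by two charts. In $U_1=\{v\neq 0\}$ the coordinates are $x,s,t,w=u/v$ with relation $xw=s^at^b$. In $U_2=\{u\neq 0\}$ the coordinates are $s,t,z=v/u$ with $x=s^at^bz$, so $U_2\cong \bA^1_z\times Y$. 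Both charts are affine over $Y$.

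\textbf{Step 1: flatness.} Flatness of $U_2$ over $Y$ is immediate, since it is a product. For $U_1$ we will check that $xw-s^at^b$ is a non-zero-divisor on every fibre $\kappa(y)[x,w]$. This holds because its image in each fibre is a nonzero polynomial: it always contains the monomial $xw$. The local criterion for flatness (the hypersurface $\{xw=s^at^b\}\subset \bA^2\times Y$ is flat over $Y$ once the equation stays regular in every fibre) then gives flatness of $U_1$, and hence of the blowup. One should also verify that the blowup is genuinely covered by these two charts, that is, that the Rees algebra has no extra components. This follows because $(x,s^at^b)$ is generated by a regular sequence on $\bA^1_x\times Y$ whenever $s^at^b$ is a non-zero-divisor, which holds since $Y$ is integral and $s^at^b\neq 0$. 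In the degenerate case $a=b=0$ the ideal is the unit ideal and the blowup is just $\bA^1_x\times Y$, so the statement is trivial there.

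\textbf{Step 2: fibres are nodal.} Over a point of $Y$ where $s^at^b\neq 0$, the fibre of $U_1$ is $\{xw=c\}$ with $c\neq 0$, which is smooth. Glued with $U_2$, the whole fibre is isomorphic to $\bA^1_x$. Over a point where $s^at^b=0$, the fibre of $U_1$ is $\kappa[x,w]/(xw)$, which is two lines crossing at a node. The fibre of $U_2$ is $\bA^1_z$. These glue to $\bA^1$ with a $\bP^1$ rational tail attached at a single node, exactly as in the computation of \Cref{L:flatfamilyofnodalcurves}. This covers the origin $s=t=0$ as well as the special fibres over $(1,0)$ or $(0,1)$ whenever $a$ or $b$ is positive.

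\textbf{Expected obstacle.} The main subtle point is the reduction to integer exponents and the argument that the blowup is described by the two standard charts, which is the regular-sequence argument in Step 1. The fibrewise checks themselves are routine. Reducedness of the fibres, which is also what \Cref{L: unqiue ideal} requires, follows because the only fibre relations are $xw=0$ and $xw=c$, each with exponent $1$ in $x$.
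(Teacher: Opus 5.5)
Your proposal is correct and follows the paper's approach. The paper's proof just refers back to \Cref{L:flatfamilyofnodalcurves}, which is the same two-chart computation with $t^b$ in place of $s^at^b$. Your version is more complete than that sketch: you justify the chart presentation using the regular sequence $(x,s^at^b)$, you check every fibre instead of only the origin, and you get flatness from the fibrewise non-zero-divisor criterion rather than the paper's brief appeal to $(s,t)$ being a regular sequence.
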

\begin{proof}
    This proof is very similar to \Cref{L:flatfamilyofnodalcurves}.
\end{proof}

\begin{rem}
    We've constructed a $\epsilon_+$-stable family over one copy of $\Spec(R)$. This is the unique filling, as in \Cref{R:+unique}. The other side with the $\epsilon_0$-semistable filling is unique since we must blowup at an ideal of the form $(x, s^at^b)$, as in \Cref{L: unqiue ideal}.
\end{rem}

\subparagraph{Case 3b: Strictly $\epsilon_0$-semistable on $(0,1)$ and $(1,0)$.} For our final case, we have
\begin{prop}
    The quasimap to $\bP^N$ from the family of nodal curves given by $\mathrm{Bl}_{(x, s^at^b)}(\bA_x^1 \times \STr)$ for certain $a, b \in \bQ_{>0}$ is $\epsilon_0$-semistable over $(1,0)$ and $(0,1)$ and is $\epsilon_0$-semistable over $s = t = 0$, after passing to a ramified cover.
\end{prop}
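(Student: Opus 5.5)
Following the template of \Cref{T:epsilon-pm}, \Cref{T:epsilon-0-} and \Cref{T:epsilon-0+}, I would reduce via the contraction morphism $\tilde{c}_{\bP^N}$ of \Cref{T: e_0-contraction} and \Cref{L:open-dense} to the common underlying $\epsilon_-$-stable local data $(\bA^1_R, \cO_{\bA^1_R}, \varphi_i(x,\pi) = x^{m_0}\alpha_i(x) + \pi\beta_i(x,\pi))$ with $\alpha_0(0)=1$. Here both special fibres are strictly $\epsilon_0$-semistable, i.e.\ each carries a degree $m_0$ rational tail with a length $m_0$ basepoint. By \Cref{L: unqiue ideal}, any filling realizing a single rational tail over the point $p$ must come from blowing up an ideal of the form $(x, s^at^b)$, so the problem is to choose $a,b\in\bQ_{>0}$ (after adjoining $\sqrt[c]{\pi}$ as allowed by \Cref{R:adjoin-roots}).

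Working in the chart $U_2$ (away from the node, where $u\neq 0$), I would write the sections of $\cO(-m_0E)$ exactly as in \Cref{T:epsilon-0+}:
\[
\psi_i = \Big[\big(\tfrac{v}{u}\big)^{m_0}\alpha_i\big(s^at^b\tfrac{v}{u}\big) + \sum_{j,k} a_{j,k}\big(\tfrac{v}{u}\big)^{j} s^{(k+1)+a(j-m_0)} t^{(k+1)+b(j-m_0)}\Big]u^{m_0}.
\]
The requirement is that no exponent of $s$ or $t$ be negative. If no triple with $a_{j,k}\neq 0$ and $j<m_0$ exists, then any $a,b>0$ work (as in \Cref{T:epsilon-0-}). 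Otherwise I would take $0<a,b\le \min\{(k_i+1)/(m_0-j_i)\}$ over the relevant triples, so all terms extend across $s=0$ and $t=0$. Then, over $(1,0)$ and over $(0,1)$, the term $(v/u)^{m_0}\alpha_i$ survives and bounds the basepoint length by $m_0$, while the tail has degree $m_0$; this gives $\epsilon_0$-semistability there. At $s=t=0$ every term with $k\ge 0$ carries a positive power of $s$ or $t$ unless the bound is attained. I would choose $a,b$ strictly below the minimum to kill these terms, giving sections $(v/u)^{m_0}\alpha_i(0)$ with a length $m_0$ basepoint at $[1:0]$, which is $\epsilon_0$-semistable. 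Nodality and flatness follow from the analogue of \Cref{L:flatfamilyofnodalcurves}, and the node chart $U_1$ is handled as before.

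The main obstacle is \emph{compatibility with the given data}: the exponents $a,b$ are not free but are forced by the given strictly semistable special fibres over $(1,0)$ and $(0,1)$. Both fibres are $\epsilon_0$-semistable and not separated, so they need not be determined by genericity. I would first try to show that the restriction of $\mathrm{Bl}_{(x,s^at^b)}$ to $t=1$ (respectively $s=1$) is $\mathrm{Bl}_{(x,\pi^a)}$ (respectively $\mathrm{Bl}_{(x,\pi^b)}$) over $\Spec(R)$. I would then read off $a$ and $b$ from the given families, using the uniqueness of the blowup ideal from \Cref{L: unqiue ideal} on each side. Next I would check that these forced values automatically satisfy the no-pole inequalities. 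This should hold because each given family is itself a valid $\epsilon_0$-semistable extension, which imposes the same inequality with $a$ or $b$ in place of the exponent. Finally, I would verify the central fibre: if an inequality is an equality on one side, the corresponding term still vanishes at $s=t=0$, because the other variable appears with positive exponent $k+1+(\cdot)(j-m_0)>0$ unless both are extremal. In that doubly extremal case, I would check separately that the resulting fibre is still a degree $m_0$ tail with basepoint length at most $m_0$, using $\alpha_0(0)=1$.
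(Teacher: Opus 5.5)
Your proposal is correct and follows the paper's argument. It uses the same reduction to the local $\epsilon_-$-stable data and the same chart-$U_2$ expansion of $\psi_i$. It also makes the same choice $0<a,b\le\min\{(k_i+1)/(m_0-j_i)\}$ over triples with $j_i<m_0$, or arbitrary $a,b>0$ when no such triple occurs.

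Your further step is also sound: you restrict to $t=1$ and $s=1$ to read off $a$ and $b$ from the two given special fibres, and you check the equality and doubly-extremal cases at $s=t=0$ using $\alpha_0(0)=1$. This makes explicit the compatibility with the given data over $Y^\circ$, which the paper's proof leaves implicit and only records as ranges in its summary table.
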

\begin{proof}
This is very similar to the proof of \Cref{T:epsilon-0+}. If we have an $(i, j_i, k_i)$ triple where $a_{j_i,k_i} \neq 0$ and $0 \le j_i < m_0$, then generically we have $\epsilon_+$-stability and we choose
$$ 0 < a, b \leq \min_{\substack{(i, j_i, k_i)\\a_{j_i,k_i} \neq 0}} \left\{\frac{k_i + 1}{m_0-j_i} \right\}.$$
If all $(i, j_i, k_i)$ triples where $a_{j_i,k_i} \neq 0$ are such that $j_i \ge m_0$, we a free to choose any $a, b \in \bQ_{>0}$. As well, if all $a_{j_i,k_i} = 0$, then any choice of $a, b > 0$ will work.
\end{proof}

\paragraph{Summary of S-Completeness Casework.} \label{sec:S-complete-summary} We summarize the local fillings above for $\bigQmap$. In particular, we synthesize how particular rational numbers $(a,b)$ in the ideals $(x, s^at^b)$ used in the blowups above control the geometry of the quasimap. After contracting rational tails of degree $m_0$ via \Cref{T: e_0-contraction}, we have an underlying $\epsilon_-$-stable family over $\Spec(R)$, with sections 
$$\varphi_i(x, \pi) = x^{m_0} \alpha_i(x) + \pi\beta_i(x, \pi) = \sum_{r_i}a_{r_i}x^{m_0+r_i} + \sum_{j_i, k_i}a_{j_i, k_i}x^{j_i}\pi^{k_i+1}$$
for $0 \le i \le N$. If the generic fibre satisfies $\epsilon_+$-stability, we have the following invariant

$$\lambda_\mathcal{L} := \min_{\substack{(i, j_i, k_i)\\a_{j_i,k_i} \neq 0 \\ 0 \leq j_i \leq m_0-1}} \left\{\frac{k_i + 1}{m_0-j_i} \right\}.$$

Alternatively, we can extract the rational number $\lambda_\cL$ from a valuation on the underlying $\epsilon_-$-stable quasimap sections. Consider the valuation $\mathcal{V}_t$ defined by $\mathcal{V}_t(\pi) = 1$ and $\mathcal{V}_t(x) = t$. Then
$$\lambda_\cL = \max_t \left\{\mathcal{V}_t(\varphi_i(x, \pi)) = m_0t \text{ for all } i\right\}.$$

To detect which blowup was performed, we use a similar valuation on the sections $\psi_i(\pi, v)$ associated to the far point on the new rational tail. Consider the valuation $\mathcal{V}_s$ where $\mathcal{V}_s(\pi) = 1$ and $\mathcal{V}_s(x) = s$. Then
$$\lambda'_\cL = \max_s \left\{\mathcal{V}_s(\psi_i(\pi, v)) = m_0s \text{ for all } i\right\}.$$
To recover the number $b$ corresponding to the blowup at the ideal $(x, \pi^b)$, take the difference $ \lambda_\cL - \lambda'_\cL$.

All possible configurations of the two special fibres in the local $\bA^1_R$ model, together with the ideal corresponding to the blow‑up and the constraints on the parameters $a,b$ identifying the filling, are listed in the following table.
\renewcommand{\arraystretch}{1.25}
\begin{table}[H]
\centering
\begin{tabular}{|l|l|l|l|p{3.50cm}|}
\hline Case & Special Fibres & Generic Fibre & Filling & Numerics \\
\hline (2a) & $\epsilon_-$ / $\epsilon_+$ & $\epsilon_-$ and $\epsilon_+$ & $(x, t^b)$ or $(x, s^b)$& $b = \lambda_\cL$\\ \hline
(2b) & $\epsilon_-$ / $\epsilon_0$ & $\epsilon_-$ and $\epsilon_+$ & $(x, t^b)$ or $(x, s^b)$ & $0 < b < \lambda_\cL$\\ \hline
(2b) & $\epsilon_-$ / $\epsilon_0$ & $\epsilon_-$ & $(x, t^b)$ or $(x, s^b)$& $b > 0$\\ \hline
(3a) & $\epsilon_+$ / $\epsilon_0$ & $\epsilon_-$ and $\epsilon_+$ & $(x, s^a t^b)$ & $0 < b < a = \lambda_\cL$ or $0 < a < b = \lambda_\cL$\\ \hline
(3b) & $\epsilon_0$ / $\epsilon_0$ & $\epsilon_-$ and $\epsilon_+$ & $(x, s^at^b)$ & $0 < a, b < \lambda_\cL$\\\hline
(3b) & $\epsilon_0$ / $\epsilon_0$ & $\epsilon_-$ & $(x, s^at^b)$ & $a, b > 0$\\ \hline
\end{tabular}
\end{table}

\subsection{$\Theta$-Reductivity}

As done above for $S$-completeness, we establish $\Theta$-reductivity for $\bigQmap$ by analyzing a local filling problem in formal neighbourhoods of a finite number of points on the source curve.

\begin{thm} \label{T:Theta-red} The algebraic stack $\bigQmap$ is $\Theta$-reductive.
\end{thm}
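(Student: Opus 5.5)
We will mirror the structure of the proof of \Cref{T:S-complete}, now with $Y = \Spec(R[t])$ and $Y^\circ = Y \setminus \{(\pi,t)\}$. By \Cref{R:fill-without-Gm-action} it suffices to fill over $Y$ rather than over $\Theta_R$. Uniqueness of the filling is automatic by \Cref{R:unique-filling}, and we may freely adjoin roots of $\pi$ and of $t$ by \Cref{R:adjoin-roots}.

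A map $Y^\circ \to \bigQmap$ amounts to two pieces of data glued along the generic point $\Spec(K)$ of $\{t \neq 0\}$:
\begin{enumerate}
    \item an $R$-family over the locus $t \neq 0$, which is $\bG_m$-equivariantly constant in $t$;
    \item a $\bG_m$-equivariant family over $\Spec(K[t])$, i.e.\ over the locus $\pi \neq 0$, whose fibre over $t=0$ is the associated graded, or limit, of a one-parameter subgroup.
\end{enumerate}
By \Cref{L:generically-smooth} we work component by component. Case I, a single spinning degree $m_0$ rational tail, is settled by \Cref{P:degenerate-case}. In Case II the generic fibre is $\epsilon_-$-stable. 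We then apply the contraction $\tilde{c}_{\bP^N}$ of \Cref{T: e_0-contraction}. Since $\Qmapminus$ is a separated DM stack, the map $Y^\circ \to \Qmapminus$ factors through $\Spec(R)$, so we obtain an underlying $\epsilon_-$-stable family $\cC_-$ over $R$ with sections $\varphi_i = x^{m_0}\alpha_i(x) + \pi\beta_i(x,\pi)$ near each contracted point $p$.

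\Cref{lem:filling-is-a-blowup}, \Cref{L: first_pushout}, \Cref{lem: affine-blowup} and \Cref{lem:blowup_completion} apply verbatim with $Y = \Spec(R[t])$. They reduce the problem to finding an ideal $Z$ in $\Spec(R[[x]][t])$, supported over $\pi = t = 0$, such that $\Bl_Z$ recovers $\cC$ over $Y^\circ$ and carries an $\epsilon_0$-semistable extension of the sections.

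We then do casework on the configurations over the two axes.
\begin{enumerate}
    \item If no degree $m_0$ tail appears anywhere, or the family is $\epsilon_\pm$-stable throughout, the filling follows from separatedness of $\Qmapplus$ or $\Qmapminus$, as in Case 1 of the $S$-completeness proof.
    \item If the tail appears only over $\pi = 0$ (the $t$-line of the special fibre), or only over $t = 0$ (the equivariant limit over $K$), we blow up $(x, t^b)$ or $(x, \pi^b)$ respectively, with $b$ chosen as in \Cref{T:epsilon-pm} or \Cref{T:epsilon-0-}. The tail at the origin then carries the sections $(v/u)^{m_0}\alpha_i(\cdot)$ with $\alpha_0(0)=1$, so it is an $\epsilon_0$-semistable degree $m_0$ tail with a length $m_0$ basepoint.
    \item If the tail appears along both axes, we blow up $(x, \pi^a t^b)$ with $a, b$ bounded by $\lambda_\cL$, exactly as in Cases 3a and 3b.
\end{enumerate}
\Cref{L: unqiue ideal}, adapted to $R[t]$, shows these are the only blowups giving reduced nodal fibres. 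The analogue of \Cref{L:flatfamilyofnodalcurves} gives flatness and nodality: $(\pi, t)$ is a regular sequence, and the chart computations are identical with $s$ replaced by $\pi$.

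The main obstacle is that the roles of the two axes are now asymmetric. Over $\pi \neq 0$ the family is a $\bG_m$-equivariant degeneration over $K$, so we must identify which exponents $b$ are compatible with equivariance. The $\bG_m$-weights force the tail at $t=0$ to be a weight-graded object, meaning the sections over it must be monomial in $t$. We would handle this by first root-adjoining in $t$ to make all weights integral. We would then show that the given equivariant limit is itself obtained by blowing up $(x, t^{b})$ in $K[[x]][t]$ with $b$ read off from the weight, via the valuation description of $\lambda_\cL$ in the summary of the $S$-completeness casework. Finally, we check that the resulting exponents satisfy the same no-pole inequalities $b \le (k_i+1)/(m_0-j_i)$ as before. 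Once this is established, gluing the local fillings through \Cref{lem:blowup_completion} yields the extension over $Y$.
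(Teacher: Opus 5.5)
Your proposal follows the paper's proof essentially step for step. Both reduce to filling over $\Spec(R[t])$ with uniqueness from the affine diagonal, settle Case I by \Cref{P:degenerate-case}, and in Case II contract to the underlying $\epsilon_-$-stable family, localize at the contracted points, and blow up ideals $(x,\pi^a t^b)$, which give flat families because $(\pi,t)$ is a regular sequence.

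Two local corrections are needed. First, in your item (2) the assignment is reversed: $(x,t^b)$ is the unit ideal where $t\neq 0$, so it creates the tail over $t=0$ (the special fibre of the family over $\Spec K[t]$), while $(x,\pi^a)$ creates the tail over $\pi=0$. Second, because your $\varphi_i$ do not depend on $t$, after the blowup the terms $a_{j,k}x^{j}\pi^{k+1}$ with $j<m_0$ pick up a factor $t^{-b(m_0-j)}$ with no compensating power of $t$ (unlike $\pi=st$ in the $S$-completeness model). So the $t$-exponent is not controlled by $b\le (k_i+1)/(m_0-j_i)$. Instead, a tail over $t=0$ already forces $x^{m_0}\mid\varphi_i$, and then every $a,b\ge 0$ gives regular sections.
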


We reduce to the case of filling over $\Spec(R[t])$ by \Cref{R:fill-without-Gm-action}, and uniqueness of the filling follows from \Cref{R:unique-filling}. We analyze each component of the source curve separately as discussed in \Cref{L:generically-smooth}. The case where the generic fibre is a single spinning rational tail of degree $m_0$ with a length $m_0$ basepoint (Case I) is is \Cref{P:degenerate-case}. We are left to produce a filling for the case where the generic fibre is $\epsilon_-$-stable (Case II). To show $\Theta$-reductivity holds in this situation, we analyze the local $\bA^1$-filling problem discussed in the above subsection for all possible configurations of $\epsilon_0$-semistable quasimaps over $\Spec(R[t])\setminus 0$. Throughout the analysis, we freely adjoin roots of both $\pi$ and $t$ which is allowed by \Cref{R:adjoin-roots}. This case analysis is very similar to the local $\bA^1$-filling problem discussed above for S-completeness. We outline it here.

\paragraph{Case Analysis for Local Fillings.}
A map from $\Spec(R[t])\backslash 0$ to $\bigQmap$ corresponds to a family of $\epsilon_0$-semistable quasimaps over $\Spec(R)$ and a family of $\epsilon_0$-semistable quasimaps over $\Spec(K[t])$ that agree upon restriction to $\Spec(K)$. We refer to the two special fibres of these families as the $\pi$-special fibre and the $t$-special fibre, respectively.

We start with the underlying $\epsilon_-$ stable family, with sections 
$$\varphi_i(x, \pi, t) = x^{m_0} \alpha_i(x) + \pi t \beta_i(x, \pi, t) = \sum_{r_i}a_{r_i}x^{m_0+r_i} + \sum_{j_i, k_i, \ell_i}a_{j_i, k_i}x^{j_i}\pi^{k_i+1}t^{\ell_i + 1}$$
for $0 \le i \le N$ that are polynomials in $x, \pi$ and $t$, developing a length $m_0$ basepoint at $\pi=t =x=0$. 
The filling is constructed via a blowup in a very similar way as done for $S$-completeness. We blow up $\mathbb{A}^1_x\times\Theta_R $ at a strict transform of either the $\pi$-axis or the $t$ axis, or both, to construct the desired filling over $\pi = t = 0$. Depending on whether the $\pi$-special fibre and/or the $t$-special fibre requires a rational tail, we blow up at an ideal involving $\pi$, $t$, or both to produce a flat family of nodal curves. As in \Cref{L: unqiue ideal}, the only ideals yielding a reduced flat family of nodal curves are those of the form $(x,\pi^a t^b)$.

\begin{lem}
    The family $\mathrm{Bl}_{(x, \pi^a t^b)}(\bA_x^1 \times \Theta_R)$ is a flat family of nodal curves for $a, b \in \bQ_{\ge 0}$.
\end{lem}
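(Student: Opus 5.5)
We follow the proof of \Cref{L:flatfamilyofnodalcurves}, now with base $\Theta_R$ instead of $\STr$. Since $a,b$ are rational, we first pass to the ramified cover adjoining suitable roots of $\pi$ and $t$, which is allowed by \Cref{R:adjoin-roots}. After this, $\pi^a t^b$ is an honest monomial in $R' [t']$, where $R'$ is a DVR. The blowup of $\bA^1_x \times \Spec(R'[t'])$ along $(x, \pi^a t^b)$ is covered by two charts. The chart $U_1$, where $v \neq 0$, is
\[
\Spec R'[t', x, w]/(xw - \pi^a t^b), \qquad w = u/v.
\]
The chart $U_2$, where $u \neq 0$, is $\Spec R'[t', z]$ with $x = \pi^a t^b z$ and $z = v/u$. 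The key point is that $x, \pi^a t^b$ is a regular sequence in $R'[t',x]$. Hence the blowup algebra is the symmetric algebra, and these two charts are exactly the equations above, with no extra torsion components.

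For flatness, we argue chart by chart over the base $B = \Spec R'[t']$. The chart $U_2$ is an affine line over $B$, so it is smooth and flat. For $U_1$, the element $xw - \pi^a t^b$ has a unit coefficient in its leading term in $x,w$ relative to the base. It is therefore a nonzerodivisor on every fibre, that is, on $\kappa(y)[x,w]$ for each point $y \in B$. By the local criterion of flatness, the hypersurface $U_1$ is flat over $B$.

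For the fibres, we do a case split. Away from the locus $\pi^a t^b = 0$, the equation $xw = c$ with $c$ a unit shows that $U_1$ and $U_2$ glue to a copy of $\bA^1$, matching the original family. On the locus $\pi^a t^b = 0$, which is the union of the $\pi$-axis and the $t$-axis and contains the origin, the fibre of $U_1$ is $\{xw = 0\}$, a node. The fibre of $U_2$ is the line $\bA^1_z$, which is the rational tail. This is exactly the computation in \Cref{L:flatfamilyofnodalcurves}. We also need the degenerate cases. If $a = 0$ or $b = 0$, the ideal is $(x, t^b)$ or $(x, \pi^a)$, and this is Case 2 verbatim. If $a = b = 0$, the ideal is the unit ideal, the blowup is an isomorphism, and the family is $\bA^1_x \times \Theta_R$ itself. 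Being a nodal fibre is an open condition, and every closed fibre has now been checked, so every geometric fibre is nodal.

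The main obstacle is making the ramified cover rigorous. Near the origin, $\Theta_R$ is the spectrum of a regular two-dimensional ring $R[t]$, and we adjoin both $\pi^{1/c}$ and $t^{1/c}$. We must make sure that $R'[t^{1/c}]$ is still a regular base, so that the sequence $(x, \pi^a t^b)$ stays regular. This holds because $R'[t^{1/c}] \cong R'[t']$ is again a polynomial ring over a DVR. The $\bG_m$-equivariance is handled as before through \Cref{R:fill-without-Gm-action}. The ideal $(x, \pi^a t^b)$ is homogeneous once $x$ is given weight equal to the weight of $\pi^a t^b$, so the blowup descends to the quotient.
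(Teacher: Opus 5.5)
Your proof is correct and has the same overall shape as the paper's. The paper's entire proof says the computation is as in \Cref{L:flatfamilyofnodalcurves} (charts at the origin, then openness) and that flatness ``follows since $(\pi,t)$ is a regular sequence.'' Where you differ is in how the two substantive points are justified.

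For flatness, the paper's line implicitly relies on the criterion that a local ring of the total space is flat over a regular local ring of the base once a regular system of parameters of the latter is a regular sequence on it. The paper never checks that $(\pi,t)$ is regular on the blown-up total space. You instead use regularity of $(x,\pi^a t^b)$ upstairs to identify the Rees algebra with the symmetric algebra. This makes the charts exactly $xw=\pi^a t^b$ and $\bA^1_z$ over $R'[t']$. Flatness is then immediate from the fibrewise criterion, or more simply because $R'[t',x,w]/(xw-\pi^a t^b)$ is free over $R'[t']$ on $\{x^i\}_{i\ge 0}\cup\{w^j\}_{j\ge 1}$.

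For nodality, the paper computes only the fibre over the origin and appeals to openness. That step needs an extra word, since $\bA^1_x$ is not proper. Your case split on whether $\pi^a t^b$ vanishes in $\kappa(y)$ computes every fibre directly, so openness is never needed. You also cover the degenerate exponents $a=0$ or $b=0$ explicitly.

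The paper's route buys brevity; yours is self-contained.

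One small correction on equivariance: you should not reassign the weight of $x$, since that changes the family. The ideal $(x,\pi^a t^b)$ is generated by monomials, so it is $\bG_m$-stable for whatever weights $x$, $\pi$, $t$ already carry. In any case, \Cref{R:fill-without-Gm-action} makes the point moot.
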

\begin{proof}

This is similar to \Cref{L:flatfamilyofnodalcurves}. Flatness follows since $(\pi, t)$ is a regular sequence. \end{proof}

Similar to $S$-completeness, the fillings of the quasimap sections are controlled by two different invariants in the case where the generic fibre satisfies $\epsilon_+$-stability, one for each axis. If the generic fibre satisfies $\epsilon_+$-stability, define 
$$\lambda^{\pi}_\mathcal{L} := \min_{\substack{(i, j_i, k_i)\\a_{j_i,k_i, \ell_i} \neq 0 \\ 0 \leq j_i \leq m_0-1}} \left\{\frac{k_i + 1}{m_0-j_i} \right\} \hspace{0.5cm} \mathrm{and}\hspace{0.5cm} \lambda^{t}_\mathcal{L} := \min_{\substack{(i, j_i, \ell_i)\\a_{j_i,k_i, \ell_i} \neq 0 \\ 0 \leq j_i \leq m_0-1}} \left\{\frac{\ell_i + 1}{m_0-j_i} \right\}.$$

\paragraph{Summary of $\Theta$-Reductivity Casework.} The following chart summarizes all of the casework for the local model in the analysis of $\Theta$-reductivity of $\bigQmap$. 
\renewcommand{\arraystretch}{1.25}
\begin{table}[H]
\centering
\begin{tabular}{|l|l|l|l|l|p{2.25cm}|}
\hline Case & $t$-Special Fibre & $\pi$-Special Fibre & Generic Fibre & Filling & Numerics \\  \hline
(2a) & $\epsilon_+$ & $\epsilon_-$ & $\epsilon_-$ and $\epsilon_+$ & $(x, t^b)$ & $b = \lambda^t_\cL$\\ \hline
(2a) & $\epsilon_-$ & $\epsilon_+$ & $\epsilon_-$ and $\epsilon_+$ & $(x, \pi^a)$ & $a = \lambda^\pi_\cL$\\ \hline
(2b) & $\epsilon_0$  & $\epsilon_-$ & $\epsilon_-$ and $\epsilon_+$ & $(x, t^b)$ & $0 < b < \lambda^t_\cL$\\ \hline
(2b) & $\epsilon_-$  & $\epsilon_0$ & $\epsilon_-$ and $\epsilon_+$ & $(x, \pi^a)$ & $0 < a < \lambda^\pi_\cL$\\ \hline
(2b) & $\epsilon_0$  & $\epsilon_-$ & $\epsilon_-$ & $(x, t^b)$ & $b>0 $\\ \hline
(2b) & $\epsilon_-$  & $\epsilon_0$ & $\epsilon_-$ & $(x, \pi^a)$ & $a>0 $\\ \hline
(3a) & $\epsilon_+$  & $\epsilon_0$ & $\epsilon_-$ and $\epsilon_+$ & $(x, \pi^a t^b)$ & $a = \lambda^\pi_\cL$ \newline $0 < b < \lambda^t_\cL$\\ \hline
(3a) & $\epsilon_0$  & $\epsilon_+$ & $\epsilon_-$ and $\epsilon_+$ & $(x, \pi^a t^b)$ & $0 < a < \lambda^\pi_\cL$ \newline $b = \lambda^t_\cL$\\ \hline
(3b) & $\epsilon_0$  & $\epsilon_0$ & $\epsilon_-$ and $\epsilon_+$ & $(x, \pi^at^b)$ & $0 < a < \lambda^\pi_\cL$ \newline $0 < b < \lambda^t_\cL$\\ \hline
(3b) & $\epsilon_0$  & $\epsilon_0$ & $\epsilon_-$ & $(x, \pi^at^b)$ & $ a >0$ \newline $b > 0$\\ \hline
\end{tabular}
\end{table}

\subsection{Properness of the Good Moduli Space}

In this section, we prove the properness of the good moduli space for $\bigQmap$. To do this, it suffices to show the existence part of the valuative criterion for properness holds by \cite[Theorem A]{Alper_2023_Existence}. 

\begin{thm}\label{T:GMS-proper}
    The good moduli space for $\bigQmap$ is proper over $\mathbb{C}$.
\end{thm}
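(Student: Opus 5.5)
By \cite[Theorem A]{Alper_2023_Existence}, once $\bigQmap$ is known to be of finite type with affine diagonal, $S$-complete and $\Theta$-reductive (\Cref{T:finite-type}, \Cref{T:aff-diag}, \Cref{T:S-complete}, \Cref{T:Theta-red}), its good moduli space is separated and of finite type. It then suffices to verify the existence part of the valuative criterion for $\bigQmap$ itself, after a finite extension of the DVR. Concretely, let $\Spec(K) \to \bigQmap$ be a $K$-point. We must produce, after replacing $R$ by a finite extension $R'$, a morphism $\Spec(R') \to \bigQmap$ extending it. Surjectivity of $\bigQmap \to M$ then transfers this lifting property to the good moduli space $M$.

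The plan is to lift through an open substack that is already known to be proper, namely $\Qmapplus \subset \bigQmap$ (or equally $\Qmapminus$). The given $K$-point is a quasimap over $K$, and after a finite extension we may assume its geometric fibre is defined over $K$. If it lies in $\Qmapplus$, \Cref{T:e-stableDM} gives an extension over $\Spec(R')$ landing in $\Qmapplus \subset \bigQmap$, and we are done.

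Otherwise the generic fibre has length $m_0$ basepoints, possibly with some degree $m_0$ rational tails. In that case I would not fill the given point directly. Instead I would use the fact that properness of $M$ only requires lifting some point of $\bigQmap$ lying over the given $K$-point of $M$. Every point of $\bigQmap$ specializes, inside the fibre of $\bigQmap \to M$, to a closed point: the polystable object in which each length $m_0$ basepoint sits on a degree $m_0$ rational tail. For this argument, however, it is cleaner to move the other way and choose a point of $\Qmapminus$ in the same fibre.

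Apply the contraction $\tilde c_{\bP^N}$ of \Cref{T: e_0-contraction} to get an $\epsilon_-$-stable quasimap over $K$. This point lies in the same fibre of $\bigQmap \to M$, because the tail with a basepoint degenerates isotrivially to the contracted basepoint, via the $\bC^*$-action on a tail carrying a length $m_0$ basepoint. Since $\Qmapminus$ is proper, this $\epsilon_-$-stable quasimap extends over $\Spec(R')$. Composing with $\Qmapminus \subset \bigQmap \to M$ gives an $R'$-point of $M$ extending the original $K$-point.

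The main obstacle is justifying that the contracted $\epsilon_-$-stable quasimap maps to the same point of $M$ as the original $K$-point. By the characterization of fibres of good moduli spaces, two $K$-points have the same image in $M$ exactly when their closures in $\bigQmap_K$ meet. So I would construct an explicit one-parameter family over $\bA^1_K$. On the contracted family, blow up $\bA^1 \times \hat C$ at the basepoint $p$ times the origin, using the ideal $(x, t)$, which is the $b=1$ case of the family in \Cref{T:epsilon-0-}. The sections are twisted by $\cO(-m_0E)$ exactly as in that proof, so the fibre at $t=0$ is the polystable tail with a length $m_0$ basepoint at the far point. Doing the same with the original point, by scaling the tail coordinate, shows both points specialize to the same polystable object.

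Alternatively, and more simply, I would note that $\tilde c_{\bP^N}$ restricted to $\Qmapminus$ is the identity. This gives a map $M \to M_{-}$ to the coarse space of $\Qmapminus$, but properness needs more than that map, so the specialization argument is the one I would carry out.
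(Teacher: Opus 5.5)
Your argument is correct, but it reaches properness by a different route from the paper. The paper verifies the existence part of the valuative criterion for the stack $\bigQmap$ itself, which is the properness clause of \cite[Theorem A]{Alper_2023_Existence}. After the component-by-component reduction of \Cref{L:generically-smooth}, it extends each piece of the given $K$-point separately and then reglues:
\begin{itemize}
\item the $\epsilon_-$-stable part by properness of $\Qmapminus$;
\item a degree $m_0$ tail without a length $m_0$ basepoint by properness of $\Qmapplus$;
\item a spinning tail by the explicit rescaling of \Cref{L:single-tail-proper} over $R[\pi^{1/m_0}]$.
\end{itemize}
You never extend the given point $\xi$, even though your opening sentence says you will. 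Instead you show that $\xi$ and $\tilde{c}_{\bP^N}(\xi)$ have the same image in $M$, because both isotrivially degenerate to the same closed point. These degenerations are exactly the deformation-to-the-normal-cone and tail-scaling filtrations of \Cref{S:theta-stratifications}. Hence $\Qmapminus \to M$ is surjective, and $M$, already separated and of finite type, inherits universal closedness from the proper stack $\Qmapminus$. This avoids the regluing along former nodes and any special treatment of spinning tails, and it exhibits $M$ as the image of $\Qmapminus$. The paper's route instead produces explicit limits inside $\bigQmap$, in line with the local filling analysis of \Cref{S:GMS}. By \cite[Theorem A]{Alper_2023_Existence} the two conclusions are equivalent once $M$ exists.

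When writing out the key step, check that the two central fibres really coincide. In both, the main component is $(\hat C, L|_{\hat C}, s|_{\hat C})$, modified identically at any length $m_0$ basepoint of $\xi$ lying off the tails. Each new tail carries the constant map to $\mu(p)$ with a length $m_0$ basepoint at the far point. This follows from $\hat s_j = x^{m_0} s_j|_{\hat C}$ near $p$, and from the fact that the limit $s_j(0,y)$ of the scaled sections on a tail records their value at the node.

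Two smaller points. First, your phrase ``the tail with a basepoint degenerates isotrivially to the contracted basepoint'' has the direction reversed: the $\epsilon_-$-stable object specializes to the spinning-tail object, as your own blow-up construction shows. Second, your argument genuinely uses $(g,n)\notin\{(0,0),(0,1)\}$. For $(g,n,d)=(0,1,m_0)$ the stack $\Qmapminus$ is empty and $\tilde{c}_{\bP^N}$ does not exist, so this exclusion is doing real work, though it is harmless for the theorem as stated.
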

\begin{proof}
First, we reduce to the case where the generic fibre is smooth, and work component by component in the total space of the family, as done in \Cref{L:generically-smooth} for the proof of $\Theta$-reductivity and $S$-completeness.  To do this, we may need to extend $R$ so we can assume all nodes are $K$-points. To find a filling, we analyze the extension problem separately for each component to obtain a filling for the whole family. There as two main cases to consider.

First consider the case, when $(g, n, d) \neq (0, 1, m_0)$ and the generic fibre is not a degree $m_0$ rational tail. Then the generic fibre is $\epsilon_-$-stable. By properness of $\Qmapminus$, we have a unique extension to a family of $\epsilon_-$-stable quasimaps over $\Spec(R)$, up to finite base change.

The second case to consider is when the generic fibre is a single rational tail of degree $m_0$. Since rational tails of degree $< m_0$ are not allowed, the curve cannot further degenerate into irreducible components. If there is no length $m_0$ basepoint, the quasimap is $\epsilon_+$-stable and the extension over $\Spec(R)$ exists by properness of $\Qmapplus$, up to finite base change. If there is a length $m_0$ basepoint, we require a separate argument, which will be covered in \Cref{L:single-tail-proper} below.
\end{proof}

\begin{lem}\label{L:single-tail-proper}
     Let $(C_K, x_K, L_K, \{s_i\}_{i=0}^N)$ be an $\epsilon_0$-semistable quasimap over $\Spec(K)$ consisting of a single rational tail of degree $m_0$ with a marked point $x_K$ and a basepoint $y_K \neq x_K$ of length  $m_0$. Then, after a finite base change $R \subset R'$ where $R' = R[\pi^{1/m_0}]$, there exists an extension of this quasimap to an $\epsilon_0$-semistable quasimap over $\Spec(R')$.
\end{lem}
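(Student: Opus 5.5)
The plan is to write the generic quasimap explicitly and extend it by hand. After a finite extension of $K$ we may assume both $x_K$ and $y_K$ are $K$-points. Since $\mathcal{M}_{0,2}\cong B\bC^*$, we can then choose coordinates so that $C_K=\bP^1_K$, $x_K=\infty$ and $y_K=0$, and write $L_K\cong\cO(m_0)$. Because the basepoint at $0$ has length exactly $m_0$ and the total degree is $m_0$, every section $s_i$, viewed as a binary form of degree $m_0$, vanishes to order $m_0$ at $0$. Hence $s_i=c_i\,X^{m_0}$ for constants $c_i\in K$, not all zero, where $X$ is the coordinate vanishing at $0$. In other words, the quasimap is determined by the point $[c_0:\dots:c_N]$ together with the basepoint data, and up to the $\bC^*$-action on sections it is just a point of $\bP^N_K$.

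The extension is then built in three steps. First, extend the curve trivially: take $C_{R'}=\bP^1_{R'}$ with the sections $0$ and $\infty$, which stay disjoint in the special fibre. Second, extend the line bundle as $\cO(m_0)$. Third, rescale the sections so they extend with nonzero reduction. Set $v=\min_i v(c_i)$. The natural move is to multiply every $c_i$ by $\pi^{-v}$, so that all coefficients lie in $R$ and at least one is a unit. This rescaling is simply an automorphism of $L_K$, so it does not change the $K$-point of the stack.

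Over a DVR this normalization works whenever $v$ is an integer. The base change to $R'=R[\pi^{1/m_0}]$ in the statement handles the situation where the valuation data are naturally fractional. One instance is when a coordinate change moving $y_K$ to $0$, or the chosen trivialization of $L_K$, introduces factors of $\pi^{k/m_0}$. Another is absorbing $\pi^{v}$ into $X^{m_0}$ through the substitution $X\mapsto \pi^{v/m_0}X$, an automorphism of $\bP^1$ fixing $0$ and $\infty$. I would use this second option so that the rescaling is uniform, with $\pi^{1/m_0}$ as the only root needed.

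It then remains to check $\epsilon_0$-semistability of the special fibre, which should be immediate. The sections are $\bar c_i X^{m_0}$ with some $\bar c_i\neq 0$, so the only basepoint is $0$, it has length $m_0\le 1/\epsilon_0$, and it is disjoint from the marking $\infty$. The bundle $\omega(x)\otimes L^{\epsilon_0+\delta}$ has degree $-1+m_0(\epsilon_0+\delta)=m_0\delta>0$, so it is ample. Flatness is clear because the family is a trivial $\bP^1$-bundle.

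The main obstacle I expect is a rigorous reduction to this normal form over $K$. We need to justify that $y_K$ is $K$-rational, which holds because it is the unique basepoint and hence is Galois-stable. We also need $L_K\cong\cO(m_0)$ with the sections exactly proportional to $X^{m_0}$, and we must track where the $m_0$-th root of $\pi$ is actually required, rather than only asserting that it suffices.
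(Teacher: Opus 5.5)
Your proposal is correct and follows essentially the same route as the paper: normalize to $(\bP^1_K, x_K, y_K)$ with $L_K\cong\cO(m_0)$ so that $s_i=c_iX^{m_0}$, extend the curve and line bundle trivially over the DVR, and rescale by a scalar of valuation $-\min_i v(c_i)$ so that some coefficient has nonzero reduction, which leaves a single length-$m_0$ basepoint disjoint from the marking. Since $c_i\in K$, the minimum $v$ is already an integer, so plain rescaling by $\pi^{-v}$ works over $R$ itself; your detour through $X\mapsto\pi^{v/m_0}X$ (like the paper's passage to $R[\pi^{1/m_0}]$) is harmless but not actually needed.
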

\begin{proof}

We will construct the extension explicitly. Chose an isomorphism $C_K \cong \bP^1$ such that $x_K \in C_K(K)$ denotes a marked point at $[0:1]$, and $y_K \in C_K(K)$ denotes a basepoint of length $m_0$ occuring at $[1:0]$. Under this isomorphism, $L_K \cong \mathcal{O}_{\bP^1}(m_0)$. Since $y_K$ is a basepoint of length $m_0$, all sections vanish to order $m_0$ at $[1:0]$. In homogeneous coordinates $[x:y]$, a section of $\mathcal{O}(m_0)$ vanishing to order $m_0$ at $[1:0]$ must be divisible by $y^{m_0}$. Hence we can write
\begin{equation*}
\{s_i\}_{i = 0}^N = \{\alpha_i y^{m_0}\}_{i = 0}^N
\end{equation*}
with $\alpha_i \in K$, and at least one $\alpha_i \neq 0$.

First, we extend the family of curves over $\Spec(R)$. Consider the family $(C_K, x_k, y_K)$. A family of $\mathbb{P}^1$'s over $\Spec(K)$ with two special points corresponds to a $K$-point of $\mathcal{M}_{0,2} \cong B\bC^*$. Since $B\bC^*$ is proper, this $K$-point extends uniquely to an $R$-point. Concretely, we obtain a family of $\bP^1$'s with two sections corresponding to $x_K$ and $y_K$. The extension is $C \cong \mathbb{P}_R(\mathcal{N} \oplus \mathcal{O}_R)$ for some line bundle $\mathcal{N}$ on $\Spec(R)$ determined by the original family. 
The line bundle extends as $L:= \iota_*L_K$ where $\iota: \cC_K \hookrightarrow C$ denotes the inclusion, since the restriction map $\Pic(\bP_{R}(\mathcal{N} \oplus \mathcal{O}_{R})) \xrightarrow{} \Pic(C_K)$ is an isomorphism.

We now have a family $(C, L)$ over $\Spec(R)$. The sections of the quasimap on the generic fiber are $\{s_i = \alpha_i y^{m_0}\}_{i=0}^N$ with $\alpha_i \in K$. To extend them to sections over $\Spec(R)$, we would need $\alpha_i \in R$ for all $i$. If this is not already the case, we perform a finite base change. Let $R' = R[\pi^{1/m_0}]$ with fraction field $K'$. Consider a valuation $\nu$ on $K'$, normalized by $\nu(\pi') = 1$. Set $\lambda \in K'^\times$ with
$$\nu(\lambda) := -\min_i\nu(\alpha_i)$$
which yields $\nu(\lambda\alpha_i)\ge 0$ for all $i$, so $\lambda\alpha_i \in R'$. Define $\tilde{s_i}$ as the extension of the sections $s_i$ as follows
$$\{\tilde{s_i}\}_{i = 0}^N = \{\lambda\alpha_i y^{m_0}\}_{i = 0}^N$$
where $\lambda\alpha_i \in R'$ for all $i$. Since at least one coefficient $\lambda\alpha_i$ is non-zero, the sections are not all zero. At the point $y_0 = [1:0]$ , each $\tilde{s}_i$ vanishes to order $m_0$. Hence $\ell(y_0) = m_0$, and there are no other basepoints.
\end{proof}

\section{Semistable Quasimaps in Affine Degeneration Space}\label{S:ADeg}

In this section, we connect our construction of $\epsilon_0$-semistable quasimap fillings to the affine degeneration space, introduced by Halpern-Leistner in \cite[Section 4]{ICM2026}.

\subsection{Affine Degeneration Space}
The affine degeneration space is a metric space  associated to a $K$-point of an algebraic stack. Up to adjoining roots of the uniformizer, the affine degeneration space $\ADeg(\frak{X}, \eta)$ of a $K$-point $\eta$ of an algebraic stack $\frak{X}$ consists of all the ways to extend from $K$ over $R$.

\begin{const}[Construction 4.1, \cite{ICM2026}]
    Let $R$ be a complete DVR with unformizer $\pi$ and fraction field $K$. Let $\mathfrak{X}$ be an algebraic stack, and $\eta \in \mathfrak{X}(K)$. Consider the model for the standard $n$-simplex $\Delta^n \subset \mathbb{R}^{n+1}$:
$$\STr^n := \Spec\!\Big(R[t_0^r,\dots, t_n^r, \forall r \in \mathbb{Q}_{\geq 0}]/(t_0\cdots t_n - \pi)\Big) \big/ \mathbb{G}_n $$
where $\mathbb{G}_n = \Spec\!\big(\mathbb{Z}[z^r, \forall r \in W_n]\big)$. A morphism $\STr^0 \to \frak{X}$ corresponds to a map \\$\Spec\!\big(R[\pi^{1/m}]\big) \to \frak{X}$ for some $m$. In addition, any rational linear map $\phi : \Delta^m \to \Delta^n$, defined by a matrix with rational coefficients $\phi_{i,j} \geq 0$ such that
$\phi_{i,0} + \cdots + \phi_{i,n} = 1$, for all $i = 0,\dots, m$, induces a canonical map $\STr^m \to \STr^n$ given in coordinates by $t_j^r \;\longmapsto\; t_0^{r\phi_{0,j}} \cdots t_m^{r\phi_{m,j}}$.
A morphism $\STr^n \to X$ can be thought of as giving a family of maps from (totally ramified covers of) $\Spec R$ parameterized by rational points of $\Delta^n$.

The open substack where $t_0\cdots t_n \neq 0$ is isomorphic to $\Spec K'$, where $K' = K[\pi^{1/m}, \forall m > 0]$, and we call this generic point $\eta$. Given a stack $\frak{X}$ with a point $\xi \in \frak{X}(K)$, we define
$$
\mathbb{S}(X,\xi)_n \;:=\; \left\{\;
\sigma : \STr^n \to \frak{X} \text{ with isomorphism } \sigma(\eta) \simeq \xi|_{K'}
\;\right\}.$$
The affine degeneration space, $\ADeg(\frak{X} , \xi)$ is the topological space obtained by gluing together copies of the standard simplices $\Delta^n$ along rational linear maps, with one copy of $\Delta^n$ for each $\sigma \in \mathbb{S}(X,\xi)_n$.
\end{const}
The following pieces of the dictionary described in \cite[Section 4]{ICM2026} between properties of $\frak{X}$ and properties of $\ADeg(\frak{X} , \xi)$ will be useful to us.

\begin{enumerate}[itemsep = 2pt]
    \item $S$-completeness of $\frak{X}$ means that any two points of $\ADeg(X,\xi)$ are connected by a unique geodesic line segment.
    \item A morphism $\gamma : \Theta_R \to \frak{X}$ with an isomorphism $\gamma(1_K) \simeq \xi$ corresponds to a geodesic ray in $\ADeg(\frak{X},\xi)$.
    \item If $\frak{X}$ is $S$-complete, $\Theta$-reductive, and quasi-compact with affine diagonal, $\ADeg(\frak{X},\xi)$ is expected to to be a a complete $\mathrm{CAT}(0)$ space.
\end{enumerate}

\subsection{Application to $\epsilon_0$-Semistable Quasimaps}

We interpret our $S$-completeness and $\Theta$-reductivity constructions above in terms of the affine degeneration space $\operatorname{ADeg}(\bigQmap,\xi)$. Let $\xi\in \bigQmap(K)$ be a $K$-point of
$\bigQmap$ which is $\epsilon_-$-stable, consisting of the data $(\cC, \{x_i\}_{i = 1}^n,\cL, \{s_j\}_{i = 0}^N)$.

\paragraph{Local Description.} As shown in \Cref{sec:S-complete}, each potential node where a degree $m_0$ rational tail may form in the special fibre corresponds to a family of fillings, parameterized by $b \in [0,\lambda_\cL]$ where $\lambda_\cL \in \bQ$ is extracted from a valuation on the underlying $\epsilon_-$-stable sections.

Geometrically, the $\epsilon_0$-semistable extension over the special point of $\Spec(R)$ of this family is obtained by blowing up along the ideal $(x,\pi^b)$, where $x$ is the local coordinate at the node. The parameter $b$ interpolates between the two endpoints. Suppose the generic fibre also happens to be $\epsilon_+$-stable. Then, we have the following:
\begin{itemize}
    \item $b = 0$: no blow‑up; the limit is $\epsilon_-$-stable and we get a basepoint of length $m_0$.
    \item $b = \lambda_{\mathcal{L}}$: the limit is $\epsilon_+$-stable and we get a rational tail of degree $m_0$ with no length $m_0$ basepoint.
    \item $0 < b < \lambda_{\mathcal{L}}$: the limit is strictly $\epsilon_0$-semistable and we get a degree $m_0$ rational tail containing a length $m_0$ basepoint.
\end{itemize}

In the affine degeneration space, this family corresponds to a interval of length $\lambda_{\mathcal{L}}$ connecting the two endpoints, representing the unique $\epsilon_-$ and $\epsilon_+$-stable limits. 

If the generic fibre is not $\epsilon_+$-stable, then  the blow‑up parameter $b$ may be chosen arbitrarily in $\bQ_{>0}$. In the affine degeneration space, this corresponds to a geodesic ray $[0,\infty)$ emanating from the $\epsilon_-$-stable endpoint, with no upper bound on the parameter $b$. This family comes from a map $\Theta_R \to \bigQmap$.

\paragraph{Global Description.} Translating this local description to the global picture for the full nodal curve that is generically $\epsilon_-$-stable, we get that each point where a degree‑$m_0$ rational tail may form contributes an independent interval $[0,\lambda_{\cL}^{(i)}]$ in the affine degeneration space.  The blow‑up constructions at distinct nodes are independent, which gives
$$\prod_{i \in I} [0,\lambda_{\mathcal{L}}^{(i)}] \;\times\; \prod_{j \in J} [0,\infty)$$
where $I$ indexes nodes lying on a component that is generically $\epsilon_+$-stable, and $J$ indexes nodes where the generic fibre of the component is not $\epsilon_+$-stable.
Thus, locally near $\xi$, the affine degeneration space is a $\mathrm{CAT}(0)$ space, as a product of intervals and rays. This matches the expected structure of the affine degeneration space for a stack that is S-complete and $\Theta$-reductive with affine diagonal, giving an example of the construction introduced in \cite[Section 4]{ICM2026}.

\section{$\Theta$-Stratifications of $\epsilon_0$-Semistable Quasimaps} \label{S:theta-stratifications}
We first recall the definition of a $\Theta$-stratification below, and the rest of the theory of $\Theta$-stability and numerical invariants introduced in \cite{halpernleistner2022structureinstabilitymodulitheory} will be presented throughout the rest of the section as needed. We first recall the definition of a filtration of a point of an algebraic stack $\frak{X}$ with quasi-affine diagonal and locally of finite presentation over a scheme $T$.

\begin{notn}
Let $k$ be a field. Denote by $0 \in \bA^1_k$ the fixed point given by the vanishing of the coordinate $t$, and by $0$ it's image in $\Theta_k:= \bA^1_k/\bG_m$. Similarly, let $1$ be the $k$-point of $\bA^1_k$ given by the vanishing of $t-1$, and denote its image in the quotient $\Theta_k$ also by 1.
\end{notn}

\begin{defn} Let $p : \Spec(k) \rightarrow \frak{X}$ be a $k$-point of $\frak{X}$. A
\textbf{filtration} of $p$ consists of a morphism $f:\Theta_k \xrightarrow{} \frak{X}$ together with an
isomorphism $f(1) \simeq p$. The filtration is called \textbf{non-degenerate} if the induced $\bG_m$-action on the central fiber $f(0)$ has finite kernel.
\end{defn}

\begin{ex}
    A example of a degenerate filtration for a point of $\bigQmap$ would be a filtration of an $\epsilon_0$-semistable quasimap that is both $\epsilon_-$ and $\epsilon_+$-stable. The only filtrations of such objects are trivial. 
\end{ex}

\begin{rem}
    We will only consider non-degenerate filtrations, as these are the ones relevant for $\Theta$-stability. For the rest of this section, we may sometimes omit the adjective ``non-degenerate".
\end{rem}

The stack of filtrations $\Filt(\frak{X}) := \Map(\Theta, \frak{X})$
is represented by an algebraic stack locally of finite presentation over $T$ \cite[Proposition 1.1.2]{halpernleistner2022structureinstabilitymodulitheory}. There is a morphism $\ev_1: \Filt(\frak{X}) \rightarrow \frak{X}$ given by evaluating at $1 \in \Theta$.

\begin{defn}\cite[Definition 2.1.1]{halpernleistner2022structureinstabilitymodulitheory} A \textbf{$\Theta$-stratum} is an open and closed substack $\mathcal{S} \subset \Filt(\frak{X})$ such that $\ev_1: \mathcal{S} \rightarrow \frak{X}$ is a closed immersion.
\end{defn}

\begin{defn}\cite[Definition 2.1.2]{halpernleistner2022structureinstabilitymodulitheory}
    A \textbf{$\Theta$-stratification} of $\frak{X}$ consists of a collection of open substacks $\frak{X}_{\le c}$ indexed by a totally ordered set $\Gamma$ such that:

    \begin{enumerate}
        \item $\frak{X}_{\le c} \subset \frak{X}_{\le c'}$ for $c < c'$;
        \item $\frak{X} = \bigcup_{c\in\Gamma} \frak{X}_{\le c}$;
        \item For all $c$, there exists a $\Theta$-stratum $\mathcal{S}_c \subset \Filt(\frak{X}_{\le c})$ of $\frak{X}_{\le c}$ such that $$\frak{X}_{\le c} \setminus \ev_1(\mathcal{S}_c) = \bigcup_{c' < c} \frak{X}_{\le c'};$$
        \item For every point $p \in \frak{X}$, the set $\{c \in \Gamma \mid p \in \frak{X}_{\le c}\}$ has a minimal element.
    \end{enumerate}
\end{defn}

In this section, we will describe two different $\Theta$-stratifications of $\bigQmap$, allowing us to compare the geometries of $\Qmapminus$ and $\Qmapplus$.

\subsection{Line Bundles on $\bigQmap$}

The central fibre of a non-degenerate filtration, together with its torus action, encodes how a point of $\bigQmap$ can degenerate. By examining the weight of a particular line bundle on this central fibre, we obtain a numerical measure of instability for points of $\bigQmap$. To make this more precise, we start by introducing a family of natural line bundles on $\bigQmap$.

\begin{defn}\label{D: line-bundle} We define the line bundle $\mathcal{M}_{a, b} \in \Pic(\bigQmap)_{\bQ}$ for $a, b \in \bZ_{>0}$ by 
    $$\mathcal{M}_{a, b} := \det(R\pi_*(\cL_{\univ}^a \otimes \omega_\pi^b))^\vee $$
    where $\cL_{\univ}$ is the universal line bundle on the universal curve $\pi: \cC_{\univ} \xrightarrow{} \bigQmap$ and $\omega_\pi$ is the relative dualizing sheaf.
\end{defn}

Given a filtration $f : \Theta_k \xrightarrow{} \bigQmap$ of a $k$-point of $\bigQmap$, the pullback $f^*\mathcal{M}_{a, b}$ acquires a torus action on its central fibre. Denote the weight of this action by $\wt_0(f^*\mathcal{M}_{a, b}) \in \bZ$. We compute this weight, first in the case where the central fibre of the filtration has a single  degree $m_0$ rational tail containing a length $m_0$ basepoint. We will refer to such rational tails as \textbf{spinning}, since they are precisely the components of an $\epsilon_0$-semistable curve that rotate under a non-trivial $\bC^*$ action.

\paragraph{Weight Calculation for a Single Spinning Rational Tail.} Suppose the central fibre consists of the curve $C = C_1 \cup T$ where $T \cong \bP^1$ is a single spinning rational tail attached to the main component of the curve, $C_1$ at the node $p = [0:1]$. We give $\bC^*$ coordinate $q$ and let it act on $T$ by
$$q \cdot [x:y] \mapsto [q^rx:y]$$
with spin rate $r \in \bZ$. We write $\cO_T(\alpha, \beta)$ for the linearized line bundle with weight $\alpha$ at the node $p = [0:1]$ and weight $\beta$ at the other fixed point $[1:0]$. Line bundles on non-spinning components of the curve will have weight zero, so we may take $\alpha = 0$ for bundles restricted from $C$. The degree of the bundle is then $(\alpha-\beta)/r$. Since the degree of $\cL_{\univ}$ restricted to $T$ is $m_0$, we have that $\cL_{\univ}^a|_T$ is $\cO_T(0, -ram_0)$ in linearized notation. Similarily, the dualizing sheaf $\omega_{\pi}^b|_T$ is $\cO_T(0, rb)$. Lastly, we must isolate the weight contribution from the rational tail via tensoring with $\cO_T(-p)$, coming from the short exact sequence $0 \xrightarrow{} \cO_T(-p) \xrightarrow{} \cO_C \xrightarrow{} \cO_{C_1} \xrightarrow{} 0$. The representation $R\Gamma(\cL_{\univ}^a \otimes \omega_\pi^b)$ decomposes into a trivial contribution from $C_1$ and the contribution from $T$, twisted by $\cO_T(-p)$. Thus, the relevant linearized bundle on $T$ is
$$ \cO_T(-1, r(b-am_0))$$
with degree $d' := am_0-b -1$. Computing the weight of $\det(R\Gamma(\cO_T(-1, r(b-am_0)))^\vee$ gives
$$-\dfrac{r}{2}(d'+1)(-1 + b - m_0 a).$$

\begin{rem} \label{R: weight-vanishes} Notably, the weight vanishes when $b \in\{am_0, am_0+1\}$. When analyzing stability, we therefore only consider the line bundles $\mathcal{M}_{a, b}$ of \Cref{D: line-bundle} where $b \notin \{am_0, am_0+1\}$. If $b \in \{am_0, am_0+1\}$, all points of $\bigQmap$ will be semistable.
\end{rem}

\paragraph{Weight Calculation for Multiple Spinning Tails.} We now generalize the weight calculation to a central fibre consisting of a main component $C_1$ together with $s$ spinning rational tails $T_1, \ldots T_s$, attached to $C_1$ at distinct points $p_1, \ldots p_s$. Each tail spins independently with rates $(r_1, \ldots, r_s) \in \bZ^s$. Since the $\bC^*$ action on different spinning tails is independent, the total weight of $\det(R\Gamma(\cL_{\univ}^a \otimes \omega_{\pi}^b))^\vee$ is the sum of the contributions from each tail, which gives
\begin{equation}\label{E:LB-weight}
    \dfrac{-1}{2}\bigl(\sum_{i=1}^s r_i\bigr)(d'+1)(-1 + b - m_0 a).
\end{equation} 

\subsection{Numerical Invariants}

To study $\Theta$-stability for points of $\bigQmap$ and define $\Theta$-stratifications, we use a numerical invariant, a structure on algebraic stacks introduced by Halpern-Leistner. We summarize the definition; a more precise formulation can found in \cite[Definition 0.0.3]{halpernleistner2022structureinstabilitymodulitheory}. 

\begin{defn} A \textbf{numerical invariant} on an algebraic stack $\frak{X}$ is a scale-invariant function that assigns a real number $\nu(f)$ to any non-degenerate filtration $f : \Theta \xrightarrow{} \frak{X}$ as follows: given the data of 
\begin{itemize}
    \item a cohomology class $\ell \in H^2(\frak{X}; \bQ)$
    \item a positive-definite class $b \in H^4(\frak{X}; \bQ)$
\end{itemize}
the numerical invariant is $\nu(f) := f^*\ell/\sqrt{f^*b}\in \bR$. This assignment must be invariant under field extensions, locally constant in families, and compatible with restriction of tori, analogous to conditions (1), (2) and (3) in \Cref{D:norm} below.

\end{defn}

To obtain a scale-invariant $\nu$ as above for stability analysis, we must divide by a suitable norm on graded points of $\bigQmap$. Note that although the following definition involves data for all $q \ge 1$, only the $q = 1$ data will be used to define semistability.

\begin{defn} \cite[Definition 4.1.12]{halpernleistner2022structureinstabilitymodulitheory}\label{D:norm}
A \textbf{norm} $b$ on graded points of $\frak{X}$ is an assignment defined as follows. Let $k$ be a field, $p \in \frak{X}$, and $\gamma:(\mathbb{G}_m^q)_k \to \Aut(p)$ a homomorphism with finite kernel. Then $b$ assigns to this data a positive definite quadratic norm $b_\gamma : \mathbb{R}^q \to \mathbb{R}$ such that:
\begin{enumerate}
\item[(1)] $b_\gamma$ is unchanged under field extensions $k \subset k'$.
\item[(2)] $b$ is locally constant in algebraic families. That is, choose any scheme $T$, a morphism $\zeta: T \xrightarrow{} \frak{X}$ and a homomorphism $\gamma: (\bG_m^q)_T \xrightarrow{} \Aut(\zeta)$ of $T$-group schemes with finite kernel. As we vary $t \in T$, we require that the function $b_{\gamma_t}$ is locally constant on $T$.
\item[(3)] Given a homomorphism $\phi : (\mathbb{G}_m^w)_k \to (\mathbb{G}_m^q)_k$ with finite kernel, the norm $b_{\gamma \circ \phi}$ is the restriction of $b_\gamma$ along the inclusion $\mathbb{R}^w \hookrightarrow \mathbb{R}^q$ induced by $\phi$.
\end{enumerate}
\end{defn}

Consider a filtration of a point of $\bigQmap$ whose central fibre has $s$ spinning tails. The automorphism group includes permutations of these tails, so the norm must invariant under this action of the symmetric group. With this in mind, we now define the norm we will use for stability analysis.

\begin{defn}\label{D:norm =r^2}
Given a filtration $f$ of a point of $\bigQmap$ where the central fibre has $s$ spinning rational tails, we consider the following function
$$f^*b:= ||\vec{r}||^2  = \sum_{i=1}^s r_i^2$$
where $\vec{r} = (r_1, \ldots, r_s)$ records the spin rate of the $s$ spinning tails in the central fibre of the filtration.
\end{defn}

\begin{lem}
\Cref{D:norm =r^2} defines a norm on graded points of $\bigQmap$.
\end{lem}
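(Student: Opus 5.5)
To verify that \Cref{D:norm =r^2} defines a norm, I will first make the assignment intrinsic: for a $k$-point $p$ of $\bigQmap$ and a homomorphism $\gamma:(\bG_m^q)_k \to \Aut(p)$ with finite kernel, I define $b_\gamma:\bR^q \to \bR$ by $b_\gamma(v) := \sum_{i=1}^s \langle \chi_i, v\rangle^2$. Here $T_1,\ldots,T_s$ are the spinning rational tails of the underlying curve of $p$. The character $\chi_i \in \bZ^q$ records the rate at which $\gamma$ rotates $T_i$, i.e.\ the weight of the induced action on the tangent line $T_{[1:0]}T_i$ at the basepoint (equivalently, minus the weight at the node). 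Specializing to $q=1$ and a filtration $f$ gives $\chi_i = r_i$ and $f^*b = \sum r_i^2$, so this recovers the formula in \Cref{D:norm =r^2}.

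The first substantive step is a description of $\Aut(p)$. The reduced identity component is the torus $(\bC^*)^{s'}$ rotating the spinning tails, where $s' \le s$ is the number of tails admitting a nontrivial rotation; in fact all spinning tails rotate. The remaining part of $\Aut(p)$ is a finite group, coming from the finite automorphism group of the $\epsilon_-$-stable contraction $\tilde c_{\bP^N}(p)$ of \Cref{T: e_0-contraction} together with permutations of isomorphic tails. To see this, I use that rotations of a spinning tail $T \cong \bP^1$ fixing the node and the basepoint form a $\bC^*$, and each such rotation lifts uniquely to $L|_T$ and the sections, by the scaling argument in \Cref{T:aff-diag}.

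Positive definiteness then follows. Since $\gamma$ has finite kernel, its composition with $\Aut(p)^\circ \cong \bG_m^{s}$ has finite kernel, so the map $\bZ^q \to \bZ^s$, $v \mapsto (\langle\chi_i,v\rangle)_i$, is injective after tensoring with $\bQ$. Hence $b_\gamma(v) = 0$ forces $v=0$. The form is quadratic and has rational coefficients, so it is a positive definite quadratic norm. Permutations act on the collection $\{\chi_i\}$ by permuting it, so $b_\gamma$ does not depend on the labeling of the tails; this gives the symmetric group invariance that the paper emphasizes.

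Of the three axioms, (1) and (3) are formal. Field extensions $k \subset k'$ do not change the set of spinning tails or the weights, because both are computed from the geometric fibre. For a homomorphism $\phi$, the characters of $\gamma\circ\phi$ are $\phi^*\chi_i$, so $b_{\gamma\circ\phi} = b_\gamma \circ \phi_\bR$. The main obstacle is axiom (2), local constancy in families. Given $\zeta:T \to \bigQmap$ and $\gamma:(\bG_m^q)_T \to \Aut(\zeta)$, I will consider the universal curve $\cC_T$ with its $\bG_m^q$-action. The fixed locus of the action on $\cC_T$ is a closed subscheme. The locus of nontrivially rotated components is, fibrewise, exactly the spinning tails, each containing exactly one basepoint of length $m_0$. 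I plan to show that the points of $\cC_T$ where $\bG_m^q$ acts on the tangent line by a given nonzero character $\chi$ form an open and closed subset of the relevant fixed locus, namely the isolated fixed points that are basepoints of length $m_0$. This should follow because characters of a diagonalizable group scheme acting on a line bundle over a connected base are locally constant (rigidity of tori, as in SGA3). Applying this to the relative tangent line along the section of fixed basepoints, after an étale localization on $T$ that splits these points into sections, gives that the multiset $\{\chi_i\}$ is locally constant, and therefore so is $b_{\gamma_t}$. The delicate point is that the number of spinning tails cannot jump in a family carrying the torus action. I expect this to follow because a tail spun nontrivially by $\gamma$ cannot be smoothed equivariantly: the node is fixed with nonzero weights on both branches, so its smoothing parameter has nonzero weight and must vanish on the fixed family.
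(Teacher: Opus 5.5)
Your proof is correct in outline but takes a different route from the paper. The paper never checks axioms (1)--(3) directly. It appeals to \cite[Example 4.1.13]{halpernleistner2022structureinstabilitymodulitheory}: a class in $H^4(\bigQmap,\bQ)$ that is positive definite on graded points automatically defines a norm. It then exhibits the class $2\ch_2(R\pi_*(\cL_{\univ}\otimes\omega_\pi^{m_0-1}))$. On each spinning tail, $(\cL_{\univ}\otimes\omega_\pi^{m_0-1})|_{T_i}\otimes\cO_{T_i}(-p_i)$ has degree $0$, so each tail contributes a single weight $\pm r_i$. The non-rotated part contributes weight $0$, so the class restricts to $\sum_i r_i^2$.

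For pullbacks of a cohomology class, invariance under field extension, local constancy and compatibility with $\phi$ are automatic. So the paper bypasses your deformation-theoretic argument for (2) entirely. It also produces the class $b\in H^4$ that \Cref{T:theta-stratifications} later feeds into \cite[Theorem 5.6.21]{halpernleistner2022structureinstabilitymodulitheory}. Your argument proves the lemma as stated but would not by itself supply that class.

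What your route buys is an intrinsic formula $b_\gamma(v)=\sum_i\langle\chi_i,v\rangle^2$ valid for all $q$; the paper's definition only specifies the value on filtrations. You also give an explicit proof of positive definiteness via $\Aut(p)^\circ\cong\bG_m^s$ and injectivity of the induced map on cocharacters. The paper's proof needs this step too for $q>1$ but leaves it implicit.

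Some details in your sketch of (2) need fixing:
\begin{itemize}
\item The branch of the node on the non-rotated component carries weight $0$, not a nonzero weight. The correct statement is that the smoothing parameter has weight equal to the sum of the branch weights, namely $\pm\chi_i\neq 0$, so it must vanish on the fixed family.
\item Spinning tails on which $\gamma$ acts trivially can be smoothed equivariantly, so ``the number of spinning tails cannot jump'' is false as stated. What is locally constant is the multiset of nonzero $\chi_i$, which is all $b_\gamma$ depends on.
\item For the far points of rotated tails to form an open and closed, finite \'etale piece of the fixed locus, you also need that a rotated tail cannot specialize to a tree with several components. This follows from \Cref{D:e0-stability}: a rational tree of total degree $m_0$ attached at one point and with at least two components would contain a tail of degree $\ge m_0$ plus either a bridge of positive degree or a second tail.
\item The component group of $\Aut(p)$ is not generated only by $\Aut(\tilde c_{\bP^N}(p))$ and permutations of tails. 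For example, a non-spinning degree $m_0$ tail with map $[x^{m_0}:y^{m_0}]$ contributes a $\mu_{m_0}$ acting trivially on the contraction. This is harmless, since only $\Aut(p)^\circ$ enters your argument.
\end{itemize}
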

\begin{proof}
By \cite[Example 4.1.13]{halpernleistner2022structureinstabilitymodulitheory}, it suffices to show that this positive-definite quadratic form arises from a cohomology class in $H^4(\bigQmap, \bQ)$. We can recover $\sum_{i=1}^s r_i^2$ from the following cohomology class:
$$2\ch_2(R\pi_*(\cL_{\univ} \otimes \omega_\pi^{m_0-1})) \in H^4(\bigQmap, \bQ)$$
where $\cL_{\univ}$ is the universal line bundle on the universal curve $\pi: \cC_{\univ} \xrightarrow{} \bigQmap$ and $\omega_\pi$ is the relative dualizing sheaf. Suppose we have a single spinning rational tail $T$ spining with rate $r$. Similar to the weight computations above, the relevant linearized line bundle to consider is $\cO_T(-1,-r)$, after isolating for the contribution from $T$. Taking the twice the second chern character gives $r^2$.

The computation is very similar for $s$ spinning rational tails $T_1, \ldots T_s$, each of degree $m_0$, and attached to $C_1$ at distinct points $p_1, \ldots p_s$. Each tail spins independently with rates $r_0, \ldots, r_s \in \bZ^s$, which gives $\sum_{i=1}^s r_i^2$.
\end{proof}

We are now able to define numerical invariants using the line bundles $\mathcal{M}_{a,b}$, $\mathcal{M}_{a,b}^\vee$ from in \Cref{D: line-bundle} and the norm from \Cref{D:norm =r^2}.

\begin{defn}\label{D:numerical-invariant}
For any non-degenerate filtration $f: \Theta \to \bigQmap$, we define the following numerical invariants
$$\nu_1(f) := \frac{2\wt_0(f^*\mathcal{M}_{a, b})}{\sqrt{f^*b}} \text{ \hspace{0.5cm} and \hspace{0.5cm} } \nu_2(f) := \frac{2\wt_0(f^*\mathcal{M}_{a, b}^\vee)}{\sqrt{f^*b}}$$
where $\mathcal{M}_{a, b} = \det(R\pi_*(\cL_{\univ}^a \otimes \omega_\pi^b))^\vee$ and $b$ is the norm on graded points given by $f^*b = \sum_{i} r_i^2$, where $\vec{r} = (r_1, \ldots, r_s)$ records the rates of the $\bG_m^s$ action on the spinning tails in the central fibre of the filtration.
\end{defn}

Using the explicit formulas for $\wt_0(f^*\mathcal{M}_{a,b})$ and $\wt_0(f^*\mathcal{M}_{a,b}^\vee)$ calculated in \Cref{E:LB-weight}, we obtain a more explicit formula for the numerical invariant as
$$\nu_1(f) = \dfrac{-\bigl(\sum_{i=1}^s r_i\bigr)}{\sqrt{\sum_{i=1}^s r_i^2}}(d'+1)(-1 + b - m_0 a) = - \nu_2(f)$$
where $s$ is the number of spinning tails in the central fibre, and $d' =am_0-b-1$. Following \cite[Definition 4.1.5]{halpernleistner2022structureinstabilitymodulitheory}, we now define semistability with respect to the numerical invariant $\nu_i$ for $i = 1, 2$.

\begin{defn}[$\Theta$-Semistability]
A point $p \in |\bigQmap|$ is \textbf{$\nu_i$-unstable} if there exists a non-degenerate filtration $f$ such that $\nu_i(f) < 0$ and $f(1) \simeq p$. Otherwise, $p$ is \textbf{$\nu_i$-semistable}. 
\end{defn}

\begin{thm}\label{T:theta-stratifications}
    The numerical invariants given in \Cref{D:numerical-invariant} define $\Theta$-stratifications of $\bigQmap$.
\end{thm}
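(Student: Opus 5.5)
The plan is to invoke the general existence theorem for $\Theta$-stratifications in \cite{halpernleistner2022structureinstabilitymodulitheory}. For a numerical invariant $\nu = \ell/\sqrt{b}$ built from a class $\ell \in H^2$ and a positive-definite norm $b \in H^4$ on a stack with affine diagonal, it reduces existence to two properties. The first is that $\frak{X}$ satisfies the existence part of the valuative criterion for $\Theta$-reductivity (or is $\Theta$-reductive). The second is that $\nu$ satisfies the HN-boundedness condition. In our setting the class $\ell$ is $c_1(\mathcal{M}_{a,b})$ or $c_1(\mathcal{M}_{a,b}^\vee)$, and $b$ is the norm of \Cref{D:norm =r^2}, which was shown to come from $2\ch_2(R\pi_*(\cL_{\univ}\otimes\omega_\pi^{m_0-1}))$. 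Thus $\nu_1,\nu_2$ are standard numerical invariants in the sense of that paper. $\Theta$-reductivity of $\bigQmap$ is \Cref{T:Theta-red}, and $\bigQmap$ has affine diagonal and is of finite type by \Cref{T:aff-diag} and \Cref{T:finite-type}. So I would cite the criterion of \cite[Theorem 5.2.3/Proposition 5.3.3]{halpernleistner2022structureinstabilitymodulitheory} (the result stating that on a $\Theta$-reductive quasi-compact stack with affine diagonal, a numerical invariant from $\ell, b$ satisfying HN-boundedness defines a $\Theta$-stratification). The remaining task is to verify HN-boundedness.

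For boundedness, I would use the explicit description of non-degenerate filtrations. By the fibrewise analysis underlying \Cref{T: e_0-contraction}, a non-degenerate filtration $f$ of a point $p$ has central fibre with $s$ spinning degree-$m_0$ tails. Such a filtration is determined by the underlying $\epsilon_-$-stable model $\tilde{c}_{\bP^N}(p)$, a choice of which attachment points (length-$m_0$ basepoints of the model, or existing degree $m_0$ tails) are spun, and the rates $\vec r\in\bZ^s$. Since $s$ is bounded by the number of possible length-$m_0$ basepoints or degree-$m_0$ tails, and this is bounded by $d/m_0$ plus the bound $k$ from \Cref{T:finite-type}, the centres of all filtrations lie in finitely many components of the stack of graded points $\Map(B\bG_m^s,\bigQmap)$, each of finite type. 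This gives HN-boundedness. One could alternatively check the stronger property that $\{\nu_i$-maximizing filtrations$\}$ form a bounded family.

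Next I would identify the maximizers concretely to make the stratification explicit. By \eqref{E:LB-weight}, $\nu_1(f) = -c\,(\sum r_i)/\|\vec r\|$ with constant $c=(d'+1)(-1+b-m_0a)$, nonzero by \Cref{R: weight-vanishes}. The rates $r_i$ have a sign forced by the direction of the degeneration. Spinning a tail so that its basepoint is created is one sign, and spinning so that a basepoint of the model is bubbled off is the other. For fixed $s$, Cauchy--Schwarz gives $|\sum r_i|/\|\vec r\|\le\sqrt{s}$, with equality exactly when all $r_i$ are equal. Hence the unique (up to scaling) $\nu_i$-maximizing destabilizing filtration spins all available tails at equal rate, and its value is $-|c|\sqrt{s}$. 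This gives a stratification indexed by $s$, totally ordered by $\sqrt{s}$, with uniqueness of HN filtrations following from convexity of $\|\cdot\|$ (strict positive-definiteness of $b$).

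The main obstacle I expect is the rigorous parametrization of all non-degenerate filtrations of a given point. This means showing every $\Theta_k\to\bigQmap$ really arises from spinning degree-$m_0$ tails over a fixed $\epsilon_-$-model, with the sign of each $r_i$ determined by whether the general fibre has a tail or a basepoint there, so that no exotic degenerations exist. I would attack this using the local $\bA^1$ model and the uniqueness of blowup ideals $(x,t^b)$ from \Cref{L: unqiue ideal} (the $\Theta$-reductivity case analysis), applied now over $\Theta_k$. The constraint $\tilde{c}_{\bP^N}\circ f$ constant, by separatedness of $\Qmapminus$, pins the model down, and the rates are read off as the exponents $b$.
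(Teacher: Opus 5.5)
\textbf{There is a genuine gap: HN-boundedness is never actually established.} Everything in your argument funnels into that step, and it rests on a complete classification of the non-degenerate filtrations of points of $\bigQmap$. You assume every $\Theta_k\to\bigQmap$ spins degree-$m_0$ tails over a fixed $\epsilon_-$-model, with rates of prescribed sign. That is exactly what you flag as the main obstacle and leave as a sketch. Without it, nothing rules out filtrations whose centres escape every bounded family.

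Even granting the classification, three points need repair.
\begin{enumerate}
\item The centre of a filtration is a rank-one graded point, i.e.\ a point of $\Map(B\bG_m,\bigQmap)$. As $\vec r$ varies, these centres occupy infinitely many components, since weights are locally constant on graded points. So ``all centres lie in finitely many components'' is false. What HN-boundedness requires, and what Cauchy--Schwarz gives you, is that every filtration can be replaced by one with at least as good $\nu$ and primitive rate vector $\pm(1,\ldots,1)$.
\item The sign of $r_i$ is not forced for points that already carry spinning tails. Their $\bG_m$-automorphisms give split filtrations of both signs, which is why such points are unstable for both $\nu_1$ and $\nu_2$.
\item The general existence criterion in \cite{halpernleistner2022structureinstabilitymodulitheory} is stated with (strict) $\Theta$- and $S$-monotonicity hypotheses on $\nu$ in addition to boundedness. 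You only address the $\Theta$ side.
\end{enumerate}

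\textbf{The missing idea is to use $S$-completeness (\Cref{T:S-complete}), which the paper has already proved.} The paper's proof is a single citation. $\bigQmap$ is quasi-compact with affine diagonal, $\Theta$-reductive and $S$-complete, so it has a good moduli space. Then \cite[Theorem 5.6.21(1)]{halpernleistner2022structureinstabilitymodulitheory} says that the numerical invariant attached to any $\ell\in H^2$ and positive-definite $b\in H^4$ defines a $\Theta$-stratification. The monotonicity and boundedness inputs are handled inside that theorem, essentially because, étale locally over the good moduli space, such a stack is a quotient of an affine scheme by a reductive group. So no description of the filtrations is needed at all. Your explicit analysis of maximizing filtrations is the kind of input used later to identify the semistable loci (\Cref{prop:ss=Q-minus}, \Cref{prop:ss=Q-plus}). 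It is not needed for the existence of the stratifications, and as it stands it does not prove it.
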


\begin{proof}
    The numerators of our numerical invariants are defined with respect to classes in $H^2(\bigQmap,\bQ)$. Similarly, a class in $H^4(\bigQmap,\bQ)$ yields the norm on graded points. Since our stack $\bigQmap$ is quasi-compact, $\Theta$-reductive and $S$-complete, by  \cite[Theorem 5.6.21(1)]{halpernleistner2022structureinstabilitymodulitheory}, the numerical invariant associated to these cohomology classes defines a $\Theta$-stratification of $\bigQmap$.
\end{proof}

Note that for our numerical invariant defined above, when $d' = -1$ (at the wall $\epsilon_0$) as in \Cref{R: weight-vanishes}, the numerical invariant vanishes and the semistable locus is $\bigQmap$. Therefore, for the rest of this section, we consider $b \notin \{am_0, am_0+1\}$ in order to get non-trivial $\Theta$-stratifications.

\subsection{Destabilizing Filtrations}
We now demonstrate destabilizing filtrations for the line bundles $\mathcal{M}_{a,b}$ and $\mathcal{M}_{a,b}^\vee$ in order to characterize unstable points corresponding to each numerical invariant. These filtrations correspond to destabilizing objects on either side of the wall: $\epsilon_+$-stable quasimaps and $\epsilon_-$-stable quasimaps.

\paragraph{Destabilizing Filtrations for $\epsilon_+$-Stable Quasimaps.} \label{C:scaling filtration}
We first construct destabilizing filtrations with respect to the line bundle $\mathcal{M}_{a, b}$ for $\epsilon_+$-stable quasimaps that are not $\epsilon_-$-stable. Recall that such quasimaps allow degree $m_0$ rational tails, but basepoints must have length strictly less than $m_0$. Specifically, if a quasimap is $\epsilon_+$-stable but not $\epsilon_-$-stable, it must contain at least one degree $m_0$ rational tail.

Consider an $\epsilon_+$-stable quasimap with a degree $m_0$ rational tail $T$ attached to the main component of the curve. To construct a destabilizing filtration, we scale the sections defining the quasimap on $T$, pushing all basepoints toward $[1:0]$ on the tail, away from the attaching node $p = [0:1]$. More precisely, consider $t \in \mathbb{A}^1$ and scale the $x$ coordinate of the sections on $T$ by $t$. If the quasimap on $T$ is given by sections $(s_0(x, y), \ldots, s_N(x, y))$ of degree $m_0$, we scale them by $(s_0(tx, y), \ldots, s_N(tx, y))$. As $t \to 0$, this scaling forces all the sections to vanish at point $[1:0]$. In particular, $x$ does not divide at least one section, otherwise we would have a basepoint at the $[0:1]$ node. This results in a length $m_0$ basepoint on far end $[1:0]$ of the tail at the limiting point $t \to 0$.

Geometrically, this scaling produces a degeneration whose central fibre consists of the original main component together with a spinning rational tail $T$. The $\mathbb{G}_m$-action on the tail has negative spin rate, reflecting the direction of degeneration: as the parameter approaches $0$, the basepoints are pushed toward $[1:0]$.

\begin{figure}[H]
    \centering
\begin{tikzpicture}[scale=0.8]
  \tikzset{
    surface/.style={thick},
    cross_front/.style={gray!70, thick},
    cross_back/.style={gray!70, thick, dashed},
    dot_green/.style={circle, fill=cyan!40, inner sep=1.8pt},
    dot_blue/.style={circle, fill=blue!80!black, inner sep=1.8pt},
    prog_arrow/.style={-{Stealth[scale=1.5]}, line width=0.8pt}
  }

  \begin{scope}[shift={(0,0)}]
    \draw[surface] (0,0) ellipse (1.5 and 0.8);
    \draw[surface] (-1.8, 0.8) to[out=-53,in=53] (-1.8, -0.8);
    \draw[cross_front] (-0.5, 0.75) arc (90:270:0.25 and 0.75);
    \draw[cross_back] (-0.5, 0.75) arc (90:-90:0.25 and 0.75);
    
    \node[dot_green] at (0.1, 0.3) {};
    \node[dot_green] at (0.5, -0.2) {};
    \node[dot_green] at (1.0, 0.2) {};
    \node[dot_green] at (1.5, 0) {};
  \end{scope}

  \begin{scope}[shift={(5.5,0)}]
    \draw[surface] (0,0) ellipse (1.5 and 0.8);
    \draw[surface] (-1.8, 0.8) to[out=-53,in=53] (-1.8, -0.8);
    \draw[cross_front] (-0.5, 0.75) arc (90:270:0.25 and 0.75);
    \draw[cross_back] (-0.5, 0.75) arc (90:-90:0.25 and 0.75);
    
    \node[dot_green] at (0.8, 0.3) {};
    \node[dot_green] at (1.1, 0.1) {};
    \node[dot_green] at (1.0, -0.2) {};
    \node[dot_green] at (1.5, 0) {};
  \end{scope}

  \begin{scope}[shift={(11,0)}]
    \draw[surface] (0,0) ellipse (1.5 and 0.8);
    \draw[surface] (-1.8, 0.8) to[out=-53,in=53] (-1.8, -0.8);
    \draw[cross_front] (-0.5, 0.75) arc (90:270:0.25 and 0.75);
    \draw[cross_back] (-0.5, 0.75) arc (90:-90:0.25 and 0.75);
    
    \node[dot_blue] at (1.5, 0) {};
  \end{scope}

  \begin{scope}[shift={(0,-2.5)}]
    \draw[thick] (-2, 0) -- (13, 0);
    \draw[thick] (6.5, 0) node { $>$};
    \fill (11.0, 0) circle (2.5pt) node[below=3pt] {0};
    \node[anchor=west] at (13.2, 0.1) { $\mathbb{A}^1 / \mathbb{G}_m$};
  \end{scope}

\end{tikzpicture}
\caption{A destabilizing filtration for an $\epsilon_+$-stable quasimap. The light blue dots are basepoints of length $<m_0$, and the dark blue dot is a basepoint of length $m_0$ on a rational tail of degree $m_0$.}
\end{figure}

If the $\epsilon_+$-stable quasimap has multiple degree $m_0$ tails $T_1,\ldots,T_s$, we may perform this scaling independently on each tail. The resulting central fibre consists of the main component together with $s$ spinning tails $T_1,\ldots,T_s$, each carrying a length $m_0$ basepoint at the far end and spinning with rate $r_i < 0$. The independence of the tails means that the $\mathbb{G}_m^s$-action on the central fiber has weights $(r_1,\ldots,r_s)$ with each $r_i < 0$.

\begin{lem} \label{L:destabilizing-filt-minus} \label{L:epsilon-plus-dest-filt} The filtration constructed above satisfies $\nu_1(f) < 0$, hence is destabilizing for $\mathcal{M}_{a,b}$.

\begin{proof}
Since each $r_i < 0$, the sum $\sum r_i$ is negative, making the factor $-\sum r_i$ positive. Recall from \Cref{D:numerical-invariant}, we have 
$$\nu_1(f) = \dfrac{-\bigl(\sum_{i=1}^s r_i\bigr)}{\sqrt{\sum_{i=1}^s r_i^2}}(d'+1)(-1 + b - m_0 a)$$
so we examine the sign of the remaining factors. If $d' \ge 0$, we have $(-1 + b - m_0 a) < 0$, so the product $(d'+1)(-1 + b - m_0 a)$ is negative. If $d' \le -2$, we have $(-1 + b - m_0 a) > 0$, so the product $(d'+1)(-1 + b - m_0 a)$ is also negative. With $-\sum r_i > 0$, the product is negative, so $\nu_1(f) < 0$.
\end{proof}
\end{lem}

\paragraph{Destabilizing Filtrations for $\epsilon_-$-Stable Quasimaps.} \label{C:normal-cone-filtration} We now construct destabilizing filtrations for the dual line bundle $\mathcal{M}_{a,b}^\vee$. Recall that $\epsilon_-$-stable quasimaps allow basepoints of length $m_0$ but prohibit degree $m_0$ rational tails. Specifically, if a quasimap is $\epsilon_-$-stable but not $\epsilon_+$-stable, it must contain at least one length $m_0$ basepoint.

Consider an $\epsilon_-$-stable quasimap with a length $m_0$ basepoint $x \in C$. To obtain a destabilizing filtration for $\mathcal{M}_{a,b}^\vee$, we perform a deformation to the normal cone centered at $x$. 

To do this, consider the product $C \times \mathbb{A}^1$ where $\bA^1$ has coordinate $t$, and blow up the closed subscheme $\{x\} \times {0}$. The blow-up contains an exceptional divisor $T \cong \bP^1$.  Removing the proper transform of $C \times {0}$ yields a flat family where, for $t \neq 0$, the fiber is isomorphic to the original curve $C$. At $t = 0$, the fiber consists of the original curve (with the basepoint removed) together with a rational tail $T \cong \mathbb{P}^1$ attached at $x$. The $\mathbb{G}_m$-action on the family induces a $\mathbb{G}_m$-action on the central fiber with positive weight on the new rational tail, corresponding to a positive spin rate. 

\begin{figure}[H]
    \centering
\begin{tikzpicture}[scale=0.8]
  \tikzset{
    surface/.style={thick},
    cross_front/.style={gray!70, thick},
    cross_back/.style={gray!70, thick, dashed},
    dot_green/.style={circle, fill=cyan!40, inner sep=1.8pt},
    dot_blue/.style={circle, fill=blue!80!black, inner sep=1.8pt},
    prog_arrow/.style={-{Stealth[scale=1.5]}, line width=0.8pt}
  }

  \begin{scope}[shift={(0.5,0)}]
    \draw[surface] (0, 0.8) to[out=-53,in=53] (0, -0.8);
    \node[dot_blue] at (0.28, 0) {};
  \end{scope}

  \begin{scope}[shift={(2.0,0)}]
    \draw[surface] (0, 0.8) to[out=-53,in=53] (0, -0.8);
    \node[dot_blue] at (0.28, 0) {};
  \end{scope}

  \begin{scope}[shift={(3.5,0)}]
    \draw[surface] (0, 0.8) to[out=-53,in=53] (0, -0.8);
    \node[dot_blue] at (0.28, 0) {};
  \end{scope}

  \begin{scope}[shift={(8.5,0)}]
    \draw[surface] (0,0) ellipse (1.5 and 0.8);
    \draw[surface] (-1.8, 0.8) to[out=-53,in=53] (-1.8, -0.8);
    \draw[cross_front] (-0.5, 0.75) arc (90:270:0.25 and 0.75);
    \draw[cross_back] (-0.5, 0.75) arc (90:-90:0.25 and 0.75);
    
    \node[dot_blue] at (1.5, 0) {};
  \end{scope}

  \begin{scope}[shift={(0,-2.5)}]
    \draw[thick] (0, 0) -- (10.5, 0);
    \draw[thick] (4.5, 0) node { $>$};
    \fill (8.5, 0) circle (2.5pt) node[below=3pt] {0};
    \node[anchor=west] at (10.9, 0.1) { $\mathbb{A}^1 / \mathbb{G}_m$};
  \end{scope}

\end{tikzpicture}

\caption{A destabilizing filtration for an $\epsilon_-$-stable quasimap. The dark blue dots are basepoints of length $m_0$. A rational tail of degree $m_0$ appears in the fibre over 0.}

\end{figure}

If the $\epsilon_-$-stable quasimap has multiple length $m_0$ basepoints $x_1, \ldots, x_s$, we may perform this procedure independently at each point. The resulting central fibre consists of the main component together with $s$ spinning tails $T_1,\ldots,T_s$, each carrying a length $m_0$ basepoint at its far end and spinning with rate $r_i > 0$. The independence of the tails means that the $\mathbb{G}_m^s$-action on the central fiber has weights $(r_1,\ldots,r_s)$ with each $r_i > 0$.

\begin{lem} \label{L:epsilon-minus-dest-filt}
The filtration constructed above satisfies $\nu_2(f) < 0$, hence is destabilizing for $\mathcal{M}_{a,b}^\vee$.
\end{lem}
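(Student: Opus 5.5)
The plan mirrors the proof of \Cref{L:epsilon-plus-dest-filt}, using the explicit formula from \Cref{D:numerical-invariant}:
$$\nu_2(f) = -\nu_1(f) = \dfrac{\bigl(\sum_{i=1}^s r_i\bigr)}{\sqrt{\sum_{i=1}^s r_i^2}}(d'+1)(-1 + b - m_0 a),$$
where $d' = am_0 - b - 1$. The denominator is positive for a non-degenerate filtration. So it suffices to determine the sign of the factor $\sum_i r_i$ and of the product $(d'+1)(-1+b-m_0a)$.

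The first step is to justify that the deformation to the normal cone at each length $m_0$ basepoint $x_i$ gives a spin rate $r_i>0$ on the new tail $T_i$. Here I would check that the sign convention matches the one used in the weight computation of \Cref{E:LB-weight}. Blowing up $\{x_i\}\times\{0\}$ in $C\times\bA^1$, with $t$ of weight $-1$, the exceptional $\bP^1$ has coordinates $[x:t]$. The $\bG_m$-action fixes the node, which is where the strict transform of $C\times 0$ meets the tail, and the far point. On the tail, the far point $[1:0]$ is where the pulled-back sections vanish to order $m_0$. This holds because the pullback of $L$ twisted by $\cO(-m_0E)$ restricts to degree $m_0$ on $T_i$, and the sections restricted to $E$ are the leading terms $x^{m_0}\alpha_j(0)$. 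The sections therefore concentrate at one fixed point, and this is the opposite configuration to the scaling filtration of \Cref{L:epsilon-plus-dest-filt}, where the rates were negative. Comparing the two constructions, I would conclude $r_i>0$ for all $i$, hence $\sum_i r_i>0$. The independence of the blow-ups at distinct points $x_1,\dots,x_s$ gives the $\bG_m^s$-weights $(r_1,\dots,r_s)$ with each $r_i>0$.

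Next, the sign analysis of $(d'+1)(-1+b-m_0a) = (am_0-b)(b-am_0-1)$ is identical to that in \Cref{L:epsilon-plus-dest-filt}. Since $b\notin\{am_0,am_0+1\}$ by \Cref{R: weight-vanishes}, and $a,b$ are integers, either $b<am_0$ or $b>am_0+1$. In the first case the first factor is positive and the second negative. In the second case the first factor is negative and the second positive. In both cases the product is negative. Combined with $\sum_i r_i>0$, this gives $\nu_2(f)<0$.

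The main obstacle is the first step, namely pinning down the sign of the spin rate rigorously rather than by picture. One must verify that the tangent weight at the far point of $T_i$ under the induced action has the sign opposite to that in the scaling construction, consistent with the linearization $\cO_T(\alpha,\beta)$ used in computing \Cref{E:LB-weight}. I would do this in the chart $U_2$ of the blow-up, where the coordinate $x/t$ on the tail has weight $+1$ relative to the weight of $x$ in the scaling filtration. I would also check that the resulting central fibre is an $\epsilon_0$-semistable point, so that $f$ is indeed a filtration in $\bigQmap$: the tail has degree $m_0$ and one length $m_0$ basepoint away from the node, and the main component has lost exactly that basepoint.
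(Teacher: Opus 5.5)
Your proposal is correct and follows the paper's proof. Like the paper, you plug into the explicit formula $\nu_2(f) = \bigl(\sum_i r_i\bigr)(d'+1)(-1+b-m_0a)/\sqrt{\sum_i r_i^2}$, take $r_i>0$ from the deformation to the normal cone (the paper simply asserts this, while you sketch why its sign is opposite to the scaling filtration's), and use negativity of $(d'+1)(-1+b-m_0a)=(am_0-b)(b-am_0-1)$; your sign check, excluding both $b=am_0$ and $b=am_0+1$, is actually more careful than the paper's appeal to ``$d'\neq -1$'', since the product also vanishes at $d'=-2$ (i.e.\ $b=am_0+1$).
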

\begin{proof}
For the dual line bundle, the numerical invariant is the negative of that for $\mathcal{M}_{a,b}$. From \Cref{D:numerical-invariant}, we have
$$\nu_2(f) = \dfrac{\bigl(\sum_{i=1}^s r_i\bigr)}{\sqrt{\sum_{i=1}^s r_i^2}}(d'+1)(-1 + b - m_0 a).$$
Since each $r_i > 0$, we have $\sum r_i > 0$. As established in \Cref{L:destabilizing-filt-minus}, $(d'+1)(-1 + b - m_0 a) < 0$ for all $d' \neq -1$. Thus $\nu_2(f) < 0$, confirming the filtration is destabilizing.
\end{proof}

\subsection{Semistable Loci} 
We show the semistable loci for the line bundles $\mathcal{M}_{a,b}$ and $\mathcal{M}_{a,b}^\vee$ are $\Qmapminus$ and $\Qmapplus$, respectively.

\begin{prop} \label{P:ss-containment}
The semistable locus for $\mathcal{M}_{a,b}$ is contained in $\Qmapminus$, and the semistable locus for $\mathcal{M}_{a,b}^\vee$ is contained in $\Qmapplus$.
\end{prop}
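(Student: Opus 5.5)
We argue by contrapositive, using the destabilizing filtrations of the previous subsection. Recall that $\bigQmap$ is the union of its $\epsilon_0$-semistable points, and that $\Qmapminus \subset \bigQmap$ is exactly the open locus where no degree $m_0$ rational tail appears. Likewise, $\Qmapplus$ is exactly the open locus where no basepoint has length $m_0$ (\Cref{P:alg-stack}).

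For the first containment, take a $k$-point $p$ of $\bigQmap$ not lying in $\Qmapminus$. Then the source curve $C$ of $p$ contains at least one degree $m_0$ rational tail, say $T_1,\ldots,T_s$. On each $T_i$ we choose coordinates $[x:y]$ with node $[0:1]$ and run the scaling filtration: the sections restricted to $T_i$ are replaced by $s_j(tx,y)$. No reparametrization is performed elsewhere on $C$. We check that this defines a morphism $f:\Theta_k \to \bigQmap$ with $f(1)\simeq p$, as follows.
\begin{enumerate}
\item The total family is the constant curve $C\times \bA^1$, with line bundle and sections glued from the constant ones on $\overline{C\setminus \cup T_i}$ and the rescaled ones on the $T_i$.
\item The gluing is legitimate because the sections are unchanged at the node $y$-chart point $[0:1]$ up to scaling by a unit. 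This uses that the $\bG_m$-action on $T_i$ fixes the node.
\item Every fibre satisfies $\epsilon_0$-semistability. For $t\neq 0$ the fibre is isomorphic to $p$. At $t=0$, each tail $T_i$ carries $(\alpha_j y^{m_0})_j$ with some $\alpha_j\neq 0$, because some section is nonvanishing at the node. This gives a single basepoint of length exactly $m_0\le 1/\epsilon_0$, disjoint from the node, while tail degrees and ampleness are unchanged.
\end{enumerate}
The central fibre has $\bG_m$ acting on the $T_i$ with negative rates, so the filtration is non-degenerate. \Cref{L:epsilon-plus-dest-filt} then gives $\nu_1(f)<0$, so $p$ is $\nu_1$-unstable, i.e. not semistable for $\mathcal{M}_{a,b}$. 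There is one subtlety: if some $T_i$ already carries a length $m_0$ basepoint at a point distinct from $[1:0]$, we first choose coordinates putting it at $[1:0]$. In that case the scaling filtration is the one induced by the existing automorphism and is still non-degenerate with $r_i<0$.

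For the second containment, take $p\notin\Qmapplus$, so $C$ has basepoints $x_1,\ldots,x_s$ of length $m_0$. Two kinds of such basepoints must be handled.
\begin{enumerate}
\item \textbf{Basepoints on non-tail components.} We use the deformation to the normal cone at each $x_i$. The family is $\Bl_{\{x_i\}\times 0}(C\times\bA^1)$ minus the strict transform of $C\times 0$. The line bundle is the pullback of $L$ twisted by $\cO(-m_0E)$, with sections given by the pulled-back sections divided by $f^{m_0}$. This is precisely the $b=1$ local computation of \Cref{T:epsilon-0-} (the $\Theta$-version with $\pi$ replaced by $t$). That computation shows the central fibre is a degree $m_0$ tail carrying $(v/u)^{m_0}\alpha_i$, which has a length $m_0$ basepoint at the far point and is $\epsilon_0$-semistable. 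Away from the $x_i$ the family is constant.
\item \textbf{Basepoints already on a degree $m_0$ tail.} If some $x_i$ lies on a degree $m_0$ tail, that tail is already spinning, and we use the positive-rate one-parameter subgroup of its automorphism group. The one exception would be a tail that is strictly semistable at rate zero, but a tail with a length $m_0$ basepoint always admits the $\bC^*$ rotation, so a positive rate is available.
\end{enumerate}
All rates are positive, so \Cref{L:epsilon-minus-dest-filt} gives $\nu_2(f)<0$, and $p$ is unstable for $\mathcal{M}_{a,b}^\vee$.

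The main obstacle is verifying that these filtrations are genuine families in $\bigQmap$ over $\Theta_k$ and are non-degenerate. This requires checking flatness of the blown-up family, that line bundle degrees on the new tail are exactly $m_0$, and that no basepoint moves into a node. For the blow-up I would reduce to the local model $\bA^1_x\times\bA^1_t$ via the pushout lemmas \Cref{L: first_pushout} and \Cref{lem:blowup_completion}, and then cite \Cref{L:flatfamilyofnodalcurves}. Also needed is the identification of the spin rates' signs with the sign conventions of \Cref{E:LB-weight}, which I would check directly on $\cO_T(-1,r(b-am_0))$.
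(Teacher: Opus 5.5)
Your proof is correct and uses the same destabilizing filtrations as the paper: scaling on degree $m_0$ tails for $\mathcal{M}_{a,b}$, deformation to the normal cone for $\mathcal{M}_{a,b}^\vee$, and the $\bG_m$-automorphism of an already spinning tail with the sign of the rate chosen as needed. Organizing by the defining condition of the complement of $\Qmapminus$ resp.\ $\Qmapplus$ is slightly cleaner than the paper's three-way split, because it makes explicit the case of a point whose degree $m_0$ tails all spin but which has another length $m_0$ basepoint off the tails; such a point still needs a negative-rate filtration for $\mathcal{M}_{a,b}$, which your argument supplies and the paper's split leaves implicit.

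There are two small fixes to make. Your first case for $\mathcal{M}_{a,b}^\vee$ should read ``basepoints not on a degree $m_0$ tail'', since a length $m_0$ basepoint on a tail of degree $>m_0$ is handled identically (the old tail becomes a bridge of positive degree). And in the deformation to the normal cone you must keep the strict transform of $C\times 0$: removing it, as both you and the paper write, leaves central fibre $\bA^1$. The right family is the full blow-up with central fibre $C\cup_{x_i}E$, which is what your local computation actually uses.
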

\begin{proof}
From \Cref{L:epsilon-plus-dest-filt}, we can destabilize all quasimaps that are $\epsilon_+$-stable and not $\epsilon_-$-stable for the line bundle $\mathcal{M}_{a,b}$. Similarily, from \Cref{L:epsilon-minus-dest-filt}, we can destabilize all quasimaps that are $\epsilon_-$-stable and not $\epsilon_+$-stable for the line bundle $\mathcal{M}_{a,b}^\vee$. We are left to analyze filtrations of $\epsilon_0$-semistable quasimaps that are not $\epsilon_-$ and $\epsilon_+$-stable. Such a quasimap must contain both a degree $m_0$ rational tail and a length $m_0$ basepoint. If all length $m_0$ basepoints are contained in a degree $m_0$ rational tail, the quasimap cannot degenerate further and we just have the trivial filtration, which can be destabilized on both sides of the wall. If there is length $m_0$ basepoint not contained in a degree $m_0$ rational tail, we can destabilize with respect to $\mathcal{M}_{a,b}^\vee$ via deformation to the normal cone as in \Cref{C:normal-cone-filtration}. Analogously, if there is a degree $m_0$ rational tail with no length $m_0$ basepoint on it, we can destabilize it with respect to $\mathcal{M}_{a,b}$ via scaling the quasimap sections as constructed in \Cref{C:scaling filtration}. 
\end{proof}

We have containment by exhibiting destabilizing filtrations, but we ultimately want to show that in fact the semistable loci for $\mathcal{M}_{a,b}$ and $\mathcal{M}_{a,b}^\vee$ are equal to $\Qmapminus$, and $\Qmapplus$. To do this, we analyze all possible filtrations of points of $\bigQmap$ to determine semistability. To analyze what possible filtrations of a quasimap can occur, we use the contraction morphism of \Cref{T: e_0-contraction}. For a  filtration $f: \Theta_K \to \bigQmap$, consider the diagram
\begin{center}
\begin{tikzcd}
\Theta_K \arrow[rr, "f"] \arrow[rrd] &  & \bigQmap \arrow[d, "\tilde{c}_{\bP^N}"] \\
                                     &  & \Qmapminus                             
\end{tikzcd}
\end{center}
where $\tilde{c}_{\bP^N}$ is the contraction morphism, taking a $\epsilon_0$-semistable quasimap to it's underlying $\epsilon_-$-stable model. All filtrations of points in $\bigQmap$ must factor through a fiber of the contraction morphism $\tilde{c}_{\bP^N}$ over some point in $\Qmapminus$, since any filtration of a point of $\Qmapminus$ is trivial. Thus the central fibre of any possible filtration of a point $f(1) \simeq p$ of $\bigQmap$ must be contained in the same fibre of the contraction morphism as $p$.
\begin{figure}[H]
\centering
\begin{tikzpicture}[scale = 0.8]

  \tikzset{
    surface/.style={thick},
    cross_front/.style={gray!70, thick},
    cross_back/.style={gray!70, thick, dashed},
    dot_green/.style={circle, fill=cyan!40, inner sep=1.8pt},
    dot_blue/.style={circle, fill=blue!80!black, inner sep=1.8pt},
    prog_arrow/.style={-{Stealth[scale=1.5]}, line width=0.8pt}
  }


  \begin{scope}[shift={(0,2)}]
    \draw[surface] (0,0) ellipse (1.5 and 0.8);
    \draw[surface] (-1.8, 0.8) to[out=-53,in=53] (-1.8, -0.8);
    \draw[cross_front] (-0.5, 0.75) arc (90:270:0.25 and 0.75);
    \draw[cross_back] (-0.5, 0.75) arc (90:-90:0.25 and 0.75);
    
    \node[dot_green] at (0.1, 0.3) {};
    \node[dot_green] at (0.5, -0.2) {};
    \node[dot_green] at (1.0, 0.2) {};
    \node[dot_green] at (1.5, 0) {};
    \node at (-2.5, 0) {(1)};
  \end{scope}

  \begin{scope}[shift={(6,2)}]
    \draw[surface] (0, 0.8) to[out=-53,in=53] (0, -0.8);
    \node[dot_blue] at (0.28, 0) {};
    \node at (-1.5, 0) {(2)};
  \end{scope}


  \begin{scope}[shift={(3, -1)}]
    \draw[surface] (0,0) ellipse (1.5 and 0.8);
    \draw[surface] (-1.8, 0.8) to[out=-53,in=53] (-1.8, -0.8);
    \draw[cross_front] (-0.5, 0.75) arc (90:270:0.25 and 0.75);
    \draw[cross_back] (-0.5, 0.75) arc (90:-90:0.25 and 0.75);
    
    \node[dot_blue] at (1.5, 0) {};
    \node at (-2.5, 0) {(3)};
  \end{scope}

\draw[thick, ->] (8, 0.5) -- (12, 0.5) node[midway, above] {$\widetilde{c}_{\mathbb{P}^N}$};

 \begin{scope}[shift={(14,0.5)}]
    \draw[surface] (0, 0.8) to[out=-53,in=53] (0, -0.8);
    \node[dot_blue] at (0.28, 0) {};
  \end{scope}

\end{tikzpicture}
\caption{A depiction of the fibres of the contraction morphism $\tilde{c}_{\bP^N}$ over a basepoint of length $m_0$. There are (1): $\epsilon_+$-stable quasimaps consisting of degree $m_0$ rational tails with possible basepoints of length $<m_0$ in light blue, (2): a unique $\epsilon_-$-stable quasimap with a basepoint of length $m_0$ in dark blue, and (3) a unique strictly $\epsilon_0$-semistable quasimap with spinning rational tail.}
\end{figure}

\begin{prop}\label{prop:ss=Q-minus}
The semistable locus for $\mathcal{M}_{a,b}$ is $\Qmapminus$.
\end{prop}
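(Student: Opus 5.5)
The inclusion of the $\nu_1$-semistable locus into $\Qmapminus$ is \Cref{P:ss-containment}. So the plan is to prove the reverse inclusion: every point $p \in \Qmapminus$ is $\nu_1$-semistable. Concretely, for every non-degenerate filtration $f\colon \Theta_k \to \bigQmap$ with $f(1)\simeq p$, we want $\nu_1(f) \ge 0$. By \Cref{D:numerical-invariant},
$$\nu_1(f) = \frac{-\bigl(\sum_{i=1}^s r_i\bigr)}{\sqrt{\sum_{i=1}^s r_i^2}}\,(d'+1)(-1+b-m_0a),$$
and for $b\notin\{am_0,am_0+1\}$ the factor $(d'+1)(-1+b-m_0a)$ is strictly negative, as shown in \Cref{L:epsilon-plus-dest-filt}. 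Hence the whole proof reduces to one claim: in the central fibre of any non-degenerate filtration of an $\epsilon_-$-stable point, all spin rates $r_i$ are positive. Granting this, $\sum r_i>0$ and so $\nu_1(f)>0$.

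\emph{Step 1: the central fibre has spinning tails.} Compose $f$ with the contraction $\tilde{c}_{\bP^N}$ of \Cref{T: e_0-contraction}. Since $\Qmapminus$ is Deligne--Mumford, the composite filtration is trivial, so $f(0)$ lies in the same fibre of $\tilde{c}_{\bP^N}$ as $p$. In that fibre, the only points with positive-dimensional automorphism groups are those containing spinning tails. A spinning tail is a degree $m_0$ rational tail carrying a length $m_0$ basepoint, inserted at a length $m_0$ basepoint $x_i$ of $p$. Non-degeneracy of $f$ therefore forces $f(0)$ to have $s\ge 1$ spinning tails $T_1,\dots,T_s$ at basepoints $x_1,\dots,x_s$ of $p$. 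Moreover, the $\bG_m$-action on $f(0)$ factors through $\bG_m^s$ with rates $(r_1,\dots,r_s)$, and each $r_i \neq 0$.

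\emph{Step 2: determine the family near each $x_i$.} Away from the $x_i$, the family $\cC\to\bA^1$ underlying $f$ is $\bG_m$-equivariantly the constant family on $C$, because it agrees with the trivial family after contraction (\Cref{L:open-dense}). Near $x_i$ we use the local $\bA^1$ model of \Cref{L: first_pushout} and \Cref{lem:filling-is-a-blowup}: the total space is a blowup of $\Spec(k[[x]][t])$ along an ideal supported at $x=t=0$. By the argument of \Cref{L: unqiue ideal}, this ideal must be $(x,t^{c})$ for some $c\in\bQ_{>0}$, after adjoining a root of $t$ as permitted by \Cref{R:adjoin-roots}. In the chart $U_2$ of the blowup, the coordinate on the exceptional tail is $v/u = x/t^{c}$. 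The basepoint of the limiting sections sits at $v/u=0$, the far point, while the node sits at $[0:1]$ in the paper's convention.

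\emph{Step 3: read off the sign.} Since $t$ has weight $-1$ and $x$ has weight $0$, the induced action on the tail coordinate is $q\cdot(v/u) = q^{c}(v/u)$. This is the same normal-cone picture as \Cref{C:normal-cone-filtration}, which gives $r_i = c\cdot(\text{positive normalisation}) > 0$. In other words, every filtration of an $\epsilon_-$-stable point is, locally at each $x_i$, a rescaled deformation to the normal cone. The opposite sign, which arises in \Cref{C:scaling filtration}, would require a degree $m_0$ tail to already be present in the generic fibre $p$, and $\epsilon_-$-stability rules this out. Substituting $r_i>0$ into the formula above gives $\nu_1(f)>0$.

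I expect Step 2/3 to be the main obstacle. The difficulty is justifying rigorously that the local model is forced: that no other equivariant degenerations exist, and that the sign convention for $r$ matches the one fixed in the weight computation behind \Cref{E:LB-weight}. I would first handle a single tail with $s=1$ by an explicit chart computation, and then deduce the general case from independence of the tails via the pushout in \Cref{lem:blowup_completion}.
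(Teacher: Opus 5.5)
Your proposal is correct and follows essentially the same route as the paper. You take containment from \Cref{P:ss-containment}, then use the contraction $\tilde{c}_{\bP^N}$ to show that any non-degenerate filtration of an $\epsilon_-$-stable point acquires spinning tails only by blowing up the constant family at length $m_0$ basepoints, and a local blowup computation shows these tails spin in the same (positive) direction as the deformation to the normal cone, so $\sum r_i>0$ and $\nu_1(f)>0$. Your $(x,t^{c})$ model, with rate equal to $c$ times the normal-cone rate, is just a more explicit version of the paper's one-line coordinate check, and it settles your sign worry by comparison rather than by fixing conventions (equivalently: at the node the local equation is $uv=t^{c}$ with $u$ of weight $0$, so the tail coordinate has weight $c\cdot\wt(t)$).
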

\begin{proof}

We have containment by \Cref{P:ss-containment}, so we are left to show that any filtration of an $\epsilon_-$-stable quasimap is semistable with respect to $\mathcal{M}_{a,b}$. The generic fibre of any non-degenerate filtration of an $\epsilon_-$-stable quasimap that is not $\epsilon_+$-stable must contain a length $m_0$ basepoint. Looking at the fibre of the contraction morphism yields that in the possible central fibres of the filtration, the length $m_0$ basepoint either perists (2) or becomes a spinning rational tail attached to the main component of the curve at the point where the basepoint was (3). For the non-degenerate filtration (3) where the basepoint becomes a spinning tail in the fibre over $0$, we must analyze which way the tail spins to determine semistability. The spinning tail in the central fibre must arise as a blowup of the surface $C_1 \times \Theta$, where $C_1$ is the fibre of the filtration over the point $1 \in \bA^1$. Analyzing the blowup of \Cref{C:normal-cone-filtration} locally in coordinates reveals that the rate of spin is strictly positive.
\end{proof}

\begin{prop}\label{prop:ss=Q-plus}
The semistable locus for $\mathcal{M}_{a,b}^\vee$ is $\Qmapplus$.
\end{prop}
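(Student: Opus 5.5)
We argue exactly as in \Cref{prop:ss=Q-minus}, with the roles of the two sides of the wall exchanged. By \Cref{P:ss-containment}, the $\mathcal{M}_{a,b}^\vee$-semistable locus is contained in $\Qmapplus$. So it remains to show that no $\epsilon_+$-stable quasimap $p$ admits a non-degenerate filtration $f$ with $f(1)\simeq p$ and $\nu_2(f)<0$.

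If $p$ is also $\epsilon_-$-stable, then it has no degree $m_0$ rational tails and no length $m_0$ basepoints. The fibre of $\tilde{c}_{\bP^N}$ containing $p$ then consists of $p$ alone, so every filtration is degenerate and $p$ is semistable.

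Suppose instead that $p$ is $\epsilon_+$-stable but not $\epsilon_-$-stable. Then $p$ has degree $m_0$ tails $T_1,\ldots,T_s$ and no basepoints of length $m_0$. Any filtration of $p$ composes with $\tilde{c}_{\bP^N}$ to a filtration of a point of $\Qmapminus$, which is trivial. Hence the central fibre $f(0)$ lies in the same fibre of the contraction morphism as $p$. By the figure describing these fibres, $f(0)$ is obtained from $p$ by replacing some subset of the tails $T_i$ with spinning tails carrying a length $m_0$ basepoint at the far end; all other data are unchanged. Moreover, a basepoint of length $m_0$ cannot appear on the main component, since that would change the underlying $\epsilon_-$-model. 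Non-degeneracy forces at least one tail to spin.

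The key step is to show that every spinning tail in $f(0)$ has spin rate $r_i<0$. Once this holds, the formula for $\nu_2$ in \Cref{L:epsilon-minus-dest-filt}, together with $(d'+1)(-1+b-m_0a)<0$ for $d'\neq -1$, gives
\[
\nu_2(f)=\frac{\sum_i r_i}{\sqrt{\sum_i r_i^2}}\,(d'+1)(-1+b-m_0a)>0,
\]
so $p$ is semistable.

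To establish the sign, work locally on a tail $T\cong\bP^1$ with node at $[0:1]$. The family over $\bA^1$ restricted to the tail is isotrivial away from $0$. Because the node and the marked attaching point are fixed, the filtration is induced by a one-parameter subgroup of $\Aut(T,[0:1])$, namely $x\mapsto t^{r}x$ (after adjoining roots of $t$ as allowed by \Cref{R:adjoin-roots}, and up to unipotent translations, which do not change the limit class). The sections $s_j(t^{r}x,y)$ converge to sections with a single basepoint of length $m_0$ at $[1:0]$. For this, at least one section must not be divisible by $x$ (there is no basepoint at the node) and all sections must degenerate to multiples of $y^{m_0}$. This happens exactly when $t^{r}x\to 0$, that is, when $r<0$. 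This matches the scaling filtration of \Cref{C:scaling filtration}. A positive rate would instead push the degeneration toward the node, producing either a basepoint at the node or a limit leaving the fibre of $\tilde{c}_{\bP^N}$, and both are excluded. Alternatively, the positive-rate degeneration from a length $m_0$ basepoint is the normal cone construction, which requires a basepoint of length $m_0$ in $f(1)$, and $p$ has none.

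\textbf{Main obstacle.} The hardest step is making this sign argument rigorous for an arbitrary $f:\Theta_k\to\bigQmap$ rather than for the explicit filtrations. This means showing that the total family near each tail is the equivariant family obtained from $T\times\bA^1$ via such a scaling. I would try to deduce this from the local $\bA^1$ model of \Cref{L: unqiue ideal}: the family is a blowup at an ideal $(x,t^b)$, and the sign of $b$ relative to the $\bG_m$-weight of $t$ (weight $-1$) determines the spin direction.
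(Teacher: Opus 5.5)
Your proposal is correct and follows the paper's own argument. It uses containment from \Cref{P:ss-containment}, then the observation that a filtration of an $\epsilon_+$-stable point stays in one fibre of $\tilde{c}_{\bP^N}$, so each degree $m_0$ tail either persists or becomes spinning, and then the fact that disjointness of the length $m_0$ basepoint from the node forces the direction of the scaling filtration of \Cref{C:scaling filtration}, hence negative spin rates and $\nu_2(f)>0$; your reduction to a cocharacter of $\Aut(T,[0:1])$ just makes the paper's one-line sign argument explicit.

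There are two small slips to fix. First, as $t\to 0$ one has $t^{r}x\to 0$ only when $r>0$, so the admissible family is $s_j(t^{r}x,y)$ with $r>0$; this is the scaling filtration, whose spin rate is negative in the paper's convention. Second, a tail collapsing back to a length $m_0$ basepoint at its own attaching node does not change the $\epsilon_-$-model, so that configuration should instead be excluded by upper semicontinuity of the number of nodes in the family over $\bA^1$.
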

\begin{proof}

We have containment by \Cref{P:ss-containment}, so we are left to show that any filtration of an $\epsilon_+$ stable quasimap is semistable with respect to $\mathcal{M}_{a,b}^\vee$. The generic fibre of any non-degenerate filtration of an $\epsilon_+$-stable quasimap that is not $\epsilon_-$-stable must contain a degree $m_0$ rational tail. Looking at the fibre of the contraction morphism yields that in the possible central fibres of the filtration, the degree $m_0$ rational tail either perists without spinning (1), or becomes a spinning rational tail (3). For the non-degenerate filtration (3) where the basepoint becomes a spinning tail in the fibre over $0$, we must analyze which way the tail spins to determine semistability. Since the length $m_0$ basepoint on the spinning tail must be disjoint from the node, the rational tail is forced to spin with a strictly negative rate, as discussed in \Cref{C:scaling filtration}.
\end{proof}

\bibliography{refs}{}
\bibliographystyle{alpha}

\noindent \textsc{Department of Mathematics, Cornell University, Ithaca, NY 14853} \\
\noindent \emph{Email address:} \texttt{srs382@cornell.edu}
\end{document}